\documentclass[11pt]{amsart}

\usepackage[T1]{fontenc}
\usepackage[utf8]{inputenc}
\usepackage{lmodern}
\usepackage{microtype}
\usepackage{amsmath,amsthm,amssymb,mathtools,mathrsfs}
\usepackage{enumitem}
\usepackage[margin=3cm]{geometry}
\usepackage{hyperref}
\hypersetup{colorlinks=true,linkcolor=blue,citecolor=blue,urlcolor=blue}

\newtheorem{theorem}{Theorem}[section]
\newtheorem{proposition}[theorem]{Proposition}
\newtheorem{lemma}[theorem]{Lemma}
\newtheorem{corollary}[theorem]{Corollary}
\theoremstyle{definition}
\newtheorem{remark}[theorem]{Remark}

\numberwithin{equation}{section}

\newcommand{\K}{\mathbb{K}}
\newcommand{\Q}{\mathbb{Q}}
\newcommand{\C}{\mathbb{C}}
\newcommand{\F}{\mathbb{F}}
\newcommand{\Z}{\mathbb{Z}}
\newcommand{\cA}{\mathcal{A}}
\newcommand{\cB}{\mathcal{B}}
\newcommand{\cE}{\mathcal{E}}
\newcommand{\Av}{\mathcal{A}_{\mathrm v}}
\newcommand{\opL}{\mathrm{L}}
\newcommand{\opU}{\mathrm{U}}

\DeclareMathOperator{\End}{End}
\DeclareMathOperator{\GL}{GL}
\DeclareMathOperator{\SL}{SL}
\DeclareMathOperator{\Tr}{Tr}
\DeclareMathOperator{\diag}{diag}
\DeclareMathOperator{\id}{id}
\DeclareMathOperator{\im}{im}
\DeclareMathOperator{\spanop}{span}
\DeclareMathOperator{\rk}{rank}
\DeclareMathOperator{\JMS}{JMS}

\newcommand{\inner}[2]{\left\langle #1,#2\right\rangle}

\title[The Jordan multiplication semigroup of matrix algebras]{The Jordan multiplication semigroup of matrix algebras is the full endomorphism semigroup}
\author{Ilja Gogi\'c, Matija Kazalicki, Mateo Toma\v{s}evi\'c}

\address{I.~Gogi\'c, Department of Mathematics, Faculty of Science, University of Zagreb, Bijeni\v{c}ka 30, 10000 Zagreb, Croatia}
\email{ilja@math.hr}

\address{M.~Kazalicki, Department of Mathematics, Faculty of Science, University of Zagreb, Bijeni\v{c}ka 30, 10000 Zagreb, Croatia}
\email{matija.kazalicki@math.hr}

\address{M.~Toma\v{s}evi\'c, Department of Mathematics, Faculty of Science, University of Zagreb, Bijeni\v{c}ka 30, 10000 Zagreb, Croatia}
\email{mateo.tomasevic@math.hr}

\date{\today}

\begin{document}
	
	\begin{abstract}
		Let $\mathbb{K}$ be a field of characteristic different from $2$, and let $M_n(\mathbb{K})$ be the algebra of all $n\times n$ matrices over $\mathbb{K}$. We consider
		the corresponding special Jordan algebra
		$\mathcal{A}:=M_n(\mathbb{K})^+$ with symmetrized product $A\circ B:=(AB+BA)/2$, and write
		$\mathcal{A}_{\mathrm v}:=M_n(\mathbb{K})$ for the underlying $\mathbb{K}$-vector space of $\mathcal{A}$. For
		$A\in\mathcal{A}$, let $\mathrm{L}_A(X):=A\circ X$ be the multiplication operator. We
		consider the Jordan multiplication semigroup generated by all multiplication
		operators,
		\[
		\mathrm{JMS}(\mathcal{A}):=\langle \mathrm{L}_A:A\in\mathcal{A}\rangle\subseteq \mathrm{End}_{\mathbb{K}}(\mathcal{A}_{\mathrm v}).
		\]
		We prove that $\mathrm{JMS}(\mathcal{A})=\mathrm{End}_{\mathbb{K}}(\mathcal{A}_{\mathrm v})$. Equivalently, every $\mathbb{K}$-linear endomorphism of $\mathcal{A}_{\mathrm v}$ is a composition of
		multiplication operators. The proof is primarily linear-algebraic. The main step is to show that $\mathrm{SL}(\mathcal{A}_{\mathrm v})\subseteq \mathrm{JMS}(\mathcal{A})$ by constructing elementary transvections inside the semigroup. We then prove determinant surjectivity on the unit group of $\mathrm{JMS}(\mathcal{A})$ and combine it with the existence of a singular element of rank $n^2-1$ to obtain the full endomorphism semigroup. In the finite-field case, the determinant-surjectivity step is established via Jacobi-sum estimates.
	\end{abstract}
	
	\keywords{Jordan algebra, Jordan multiplication operator, quadratic operator, transvection, matrix algebra, endomorphism semigroup}
	\subjclass[2020]{17C55, 15A04, 20M20, 15A30}
	
	\maketitle
	
	\section{Introduction}
	Let $\K$ be a field of characteristic different from $2$, and let
	$M_n(\K)$ be the algebra of all $n\times n$ matrices over $\K$. We consider
	the associated \emph{special Jordan algebra}, denoted by $M_n(\K)^+$, obtained
	from $M_n(\K)$ by replacing the associative product with its symmetrization
	\[
	A\circ B:=\frac{1}{2}(AB+BA).
	\]
	For general background on Jordan algebras, see
	\cite{Jacobson1968,McCrimmon2004,Springer1973}.
	
	Throughout the paper, unless explicitly stated otherwise, we write
	\[
	\cA:=M_n(\K)^+,
	\qquad
	\Av:=M_n(\K),
	\qquad
	n\ge 2.
	\]
	We regard $\Av$ as the underlying $\K$-vector space of the Jordan algebra
	$\cA$, and write $\End_{\K}(\Av)$ for the algebra of all $\K$-linear
	endomorphisms of $\Av$. For each $A\in\cA$, we write
	\[
	\opL_A\colon \Av\to \Av,
	\qquad
	\opL_A(X):=A\circ X,
	\]
	for the corresponding \emph{multiplication operator}. We then define the
	\emph{Jordan multiplication semigroup} of $\cA$ to be the subsemigroup of
	$\End_{\K}(\Av)$ generated by all multiplication operators $\opL_A$, namely
	\[
	\JMS(\cA):=\langle \opL_A:A\in \cA\rangle\subseteq \End_{\K}(\Av).
	\]
	Since $\opL_{I_n}=\id_{\Av}$, the semigroup $\JMS(\cA)$ is in fact a monoid.
	
	A natural background for the present paper is the theory of automatic additivity and, more broadly, the rigidity theory of multiplicative maps. In this circle of problems one typically studies multiplicative maps under additional assumptions such as injectivity, surjectivity, or bijectivity, and asks whether multiplicativity already forces additivity, semilinearity, or some standard algebraic form. In the associative setting, this theme appears in work of Rickart, Johnson, Martindale, Jodeit--Lam, \v{S}emrl, and others \cite{Rickart1948,Johnson1958,Martindale1969,JodeitLam1969,Semrl1995,Semrl2008}. Analogous rigidity phenomena also occur in nonassociative settings. For Lie multiplicative maps, one often obtains additivity, or almost additivity in the sense of additivity up to central-valued terms, under suitable hypotheses \cite{Semrl2005,Dolinar2007Upper,Dolinar2007Full,QiHou2011,WangZhaoChen2011}. For Jordan homomorphisms, the classical theory goes back to the foundational work of Jacobson--Rickart, Herstein, and Smiley \cite{JacobsonRickart1950,Herstein1956,Smiley1957}. In the modern preserver-theoretic setting, Jordan maps and Jordan multiplicative maps exhibit analogous automatic additivity and rigidity phenomena \cite{Molnar2002,Lu2002,AnHou2006,JiLiu2009,GogicTomasevic2025-AM,GogicTomasevic2025-LMA}.
	
	The present paper is motivated by this rigidity perspective, but approaches it
	from a different angle. Rather than studying Jordan multiplicative maps
	directly, we investigate the linear semigroup generated by the Jordan
	multiplication operators themselves. The connection with Jordan multiplicative maps is that every such map
	intertwines the corresponding multiplication-operator calculus: a map
	$\phi\colon \cA\to \cA$ satisfies
	\[
	\phi(A\circ B)=\phi(A)\circ \phi(B),
	\qquad\text{for all }A,B\in\cA,
	\]
	if and only if
	\[
	\phi\circ \opL_A=\opL_{\phi(A)}\circ \phi,
	\qquad\text{for all }A\in\cA,
	\]
	as self-maps of the underlying set of $\cA$. Thus every finite composition of multiplication operators intertwines through $\phi$ with the corresponding
	composition formed from the operators $\opL_{\phi(A)}$. In this sense, $\JMS(\cA)$ records a linear part of the multiplicative structure that is
	naturally visible to every Jordan multiplicative map. The problem studied here is to determine how large this linear structure is for
	$\cA=M_n(\K)^+$.
	
	A starting point for the present paper is a recent result of the first and third authors. They showed that every Jordan multiplicative self-map of $M_n(\K)^+$ is either constant or a Jordan ring monomorphism; in particular, every nonconstant Jordan multiplicative self-map is automatically injective and additive \cite{GogicTomasevic2025-AM}. The key input in their argument is a strong nonlinear rigidity property of Jordan multiplication: for every nonzero matrix $X\in M_n(\K)$, the iterated Jordan-multiplication closure generated by $X$, equivalently the orbit $\JMS(\cA)\cdot X$, coincides with the whole algebra $M_n(\K)$ \cite[Proposition~2.2]{GogicTomasevic2025-AM}. In an algebraically closed field of characteristic different from $2$, the corresponding zero-or-all statement was later proved in the setting of mixed Jordan-power closure \cite[Proposition~2.9]{GogicTomasevicMixed2025}.
	
	The problem addressed here is whether, for the full matrix Jordan algebra, this orbit-maximality is the shadow of a stronger linear fact: that the semigroup $\JMS(\cA)$ is already maximal inside $\End_{\K}(\Av)$.
	
	Our main result shows that this is indeed the case.
	
	\begin{theorem}\label{thm:main-all-fields}
		Let $\K$ be a field of characteristic different from $2$, and let $n\ge 1$. Then
		\[
		\JMS(M_n(\K)^+)=\End_{\K}(M_n(\K)).
		\]
	\end{theorem}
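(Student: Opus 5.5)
The plan follows the three-step strategy announced in the abstract. Write $G:=\JMS(\cA)\cap\GL(\Av)$ for the unit group of the monoid. The case $n=1$ is trivial, since $\opL_a$ is multiplication by $a$ on $\K$, so assume $n\ge 2$. The aim is to show, in order:
\begin{enumerate}[label=(\roman*)]
\item $\SL(\Av)\subseteq G$;
\item $\det\colon G\to\K^\times$ is surjective, hence $G=\GL(\Av)$;
\item $\JMS(\cA)$ contains an element of rank $n^2-1$.
\end{enumerate}
From these the theorem follows by a standard semigroup fact. Every singular endomorphism of rank $r$ can be written as $g_1 E_{r}g_2$ with $g_i\in\GL(\Av)$ and $E_r$ a rank-$r$ idempotent. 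The idempotent $E_r$ is a product of rank-$(n^2-1)$ idempotents, and any two rank-$(n^2-1)$ elements are $\GL$-equivalent. So once $\GL(\Av)\subseteq\JMS(\cA)$ and one rank-$(n^2-1)$ element lies in $\JMS(\cA)$, every endomorphism does.

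\textbf{Step (i).} Use the quadratic operators $U_A:=2\opL_A^2-\opL_{A^2}$, which lie in $\JMS(\cA)$ and satisfy $U_A(X)=AXA$. For invertible $A$, $U_A$ is invertible with inverse $U_{A^{-1}}$. Likewise, for invertible $A$ the operator $\opL_A$ itself is invertible whenever the spectrum of $A$ contains no pair $\lambda,-\lambda$, because $\opL_A$ acts on $E_{ij}$-type eigenvectors with eigenvalues $(\lambda_i+\lambda_j)/2$; its inverse is then a polynomial in $\opL_A$. Next consider $A=I+tN$ with $N^2=0$, for instance $N=E_{12}$. Then $\opL_A=\id+t\opL_N$ and $U_A=\id+2t\opL_N+t^2U_N$ are unipotent. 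Suitable products such as $\opL_{I+tN}\,\opL_{I-tN}$, or commutators of these with $U$-operators, should yield maps $\id+t\,\Phi$ with $\Phi$ of rank one and $\Phi^2=0$, i.e.\ transvections. I would compute the commutator $[\opL_{I+sE_{12}},\opL_{I+tE_{21}}]$ in the group, or use $U_{I+tE_{12}}\opL_{I+tE_{12}}^{-2}$, whose linear term cancels and leaves $\id+t^2(U_N-\opL_N^2)$. Here $U_N-\opL_N^2=\tfrac14(2U_N-N^2\text{-terms})$ has small rank, concentrated on $X\mapsto NXN$, which maps $E_{21}\mapsto E_{12}$ only, so it has rank one. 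This gives the transvection $X\mapsto X+c\,x_{21}E_{12}$. Conjugating by $U_P$ for permutation and invertible diagonal $P$, and by $\opL$-operators of diagonal matrices, should move this to enough elementary transvections $e_{\alpha\beta}(c)$ relative to the basis $\{E_{ij}\}$. Conjugation by $U_D$ rescales $c$, and conjugation by $\opL_D$ for generic diagonal $D$ acts diagonally with distinct eigenvalues. Generating all pairs $(\alpha,\beta)$, possibly via commutator relations $[e_{\alpha\beta},e_{\beta\gamma}]=e_{\alpha\gamma}$, is the main obstacle. The transvections obtained directly connect only $E_{ji}\to E_{ij}$, so I must also produce ones mixing diagonal entries with off-diagonal ones, for instance by replacing $N=E_{12}$ with $N=E_{11}-E_{22}+E_{12}-E_{21}$ or other square-zero matrices, and then check that the resulting graph on basis indices is connected in both directions.

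\textbf{Steps (ii) and (iii).} Compute $\det\opL_A=\prod_{i\le j}\frac{\lambda_i+\lambda_j}{2}$ with multiplicity. Thus $\det\opL_{\diag(\lambda_1,\dots,\lambda_n)}=\prod_i\lambda_i\prod_{i<j}\left(\frac{\lambda_i+\lambda_j}{2}\right)^2$. Taking $\lambda=(a,1,\dots,1)$ gives $a\left(\frac{a+1}{2}\right)^{2(n-1)}$. Since $\det U_A=(\det A)^{2n}$ lies in $\K^{\times 2n}$, it suffices to hit every square class modulo what the $\opL$'s already provide. Over infinite fields, varying $a$ and combining two parameters should give every value up to squares, and squares are then obtained from products. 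For finite $\F_q$ this is a counting problem: show that $x\left(\frac{x+1}{2}\right)^{2(n-1)}$, combined over several diagonal entries, represents a nonsquare. That reduces to showing some value is a nonsquare, i.e.\ that $x$ with $x(x+1)\ne 0$ and $x$ a nonsquare exists, which is trivial for $q\ge 5$. Small $q$ and odd $n$ can be handled with extra entries, using character-sum (Jacobi-sum) estimates if a general representation of all of $\F_q^\times$ is needed. For step (iii), take $A=\diag(1,-1,\lambda_3,\dots)$ with generic $\lambda_i$. Then $\opL_A$ kills exactly $E_{12}$ and $E_{21}$, giving rank $n^2-2$. Instead, use $A=\diag(1,-1,\dots)$ composed with an invertible element: $\opL_A$ restricted appropriately, or better $\opL_A$ for $A=E_{11}+\mu\sum_{i\ge2}E_{ii}$ with $\mu$ generic, whose kernel is... still contains $E_{jk}$ only if $\lambda_j+\lambda_k=0$. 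A nilpotent choice gives the cleanest answer: $A=J$, a single Jordan block of eigenvalue $0$ added to a generic diagonal part, $A=\diag(0,\lambda_2,\dots)$, has kernel exactly $\spanop\{E_{11}\}$ because $\lambda_1+\lambda_1=0$ only for $i=j=1$. So $\opL_{\diag(0,\lambda_2,\dots,\lambda_n)}$ with $\lambda_i\neq0$ and $\lambda_i+\lambda_j\neq0$ has rank $n^2-1$. Such $\lambda_i$ exist except over tiny fields, and over tiny fields one can instead compose with elements of $\GL$.
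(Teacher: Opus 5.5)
Your outer architecture ($\SL$ first, then determinants, then one corank-one element plus the semigroup lemma) is exactly the paper's. Your final choice $\diag(0,1,\dots,1)$ in step (iii) works over every field of characteristic $\neq 2$, so no small-field caveat is needed there. The genuine gap is in step (i), at the point where the paper does its real work: producing $\id+c\,\opU_N$ with the ``hard'' sign.

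First, $\opU_A=2\opL_A^2-\opL_{A^2}$ is a difference, not a product, so it lies in $\JMS(\cA)$ only when $A^2=0$ (then $\opU_A=\opL_{2I_n}\opL_A^2$). Likewise, the fact that $\opL_A^{-1}$ is a polynomial in $\opL_A$ does not put it in a semigroup. Over infinite fields, membership of inverses is a real issue; this is what Lemma~\ref{lem:gR-inverse} and Proposition~\ref{prop:cofinite-diagonal-inverses} are for.

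Second, your candidate $\opU_{I+tN}\opL_{I+tN}^{-2}$ is circular, since
\[
\opL_{I+tN}^{-1}=\opL_{I-tN}\bigl(\id+\tfrac{t^2}{2}\opU_N\bigr),
\qquad
\opU_{I+tN}=\opL_{I+tN}^2\bigl(\id+\tfrac{t^2}{2}\opU_N\bigr).
\]
Each ingredient already contains the transvection $\id+\tfrac{t^2}{2}\opU_N$ you are trying to build. What is available for free is $\opL_{I+sN}\opL_{I-sN}=\id-\tfrac{s^2}{2}\opU_N$. Products of these give only $\id-c\,\opU_N$ with $c$ one half of a sum of squares, so over a formally real field such as $\Q$ or $\mathbb{R}$ the opposite sign is never reached this way. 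In characteristic $p$ iteration fixes this. In characteristic $0$, however, the paper needs the explicit $16$-factor rational identity of Appendix~\ref{app:Q-factorization}, transported to every rank-one square-zero $M$ by $\Phi_S\opL_A\Phi_S^{-1}=\opL_{SAS^{-1}}$. Your proposal has no substitute for it.

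Even granting these transvections, the generation step is left open, as you acknowledge. Conjugating by $\opU_P$ or $\opL_D$ requires those operators and their inverses to lie in $\JMS(\cA)$, which you have not shown. The membership-free device is the covariance of Lemma~\ref{lem:conjugation-covariance}, but it only produces the trace-form-symmetric transvections $X\mapsto X+t\inner{N}{X}N$. The only ones among these that are elementary in the basis $\{E_{ij}\}$ are the moves $E_{lk}\mapsto E_{lk}+cE_{kl}$ with $k\neq l$. So in the graph on which you want to run commutator propagation, the diagonal matrix units are isolated. The paper overcomes this with the special basis of $M_2(\K)$ and the non-symmetric conjugates $g_Pu_Q(t)g_P^{-1}$ of Proposition~\ref{prop:M2}. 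These again need $g_P^{-1}\in\JMS(\cA)$, i.e.\ the hard sign.

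Step (ii) also does not go through as written:
\begin{itemize}
\item Over $\F_q$, one nonsquare determinant does not give $\Delta(\JMS(\cA))=\F_q^\times$; in $\F_7^\times$ the subgroup $\{\pm1\}$ contains the nonsquare $-1$. You must exclude every nontrivial character of $\F_q^\times/\Delta(\JMS(\cA))$, which is what the Jacobi-sum bound does, with $\F_3$ handled separately.
\item Your use of $\det\opU_A=(\det A)^{2n}$ presupposes $\opU_A\in\JMS(\cA)$, and in any case supplies only $2n$-th powers.
\item Over infinite fields the step is only asserted. The paper obtains arbitrary determinants as ratios $\det m(a)/\det m(b)$, which requires $m(b)^{-1}\in\JMS(\cA)$; this is proved for all but finitely many $b$ via a four-factor scalar identity.
\end{itemize}
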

	
	This phenomenon has no associative analogue. If $\cB$ is a unital associative $\K$-algebra and $\Lambda_A\colon \cB\to\cB$ denotes left multiplication by $A\in\cB$, that is,
	\[
	\Lambda_A(X):=AX \qquad (X\in\cB),
	\]
	then
	\[
	\Lambda_A\Lambda_B=\Lambda_{AB}, \qquad \text{for all } A,B\in\cB,
	\]
	so the semigroup generated by the left multiplication operators is simply
	\[
	\Lambda(\cB):=\{\Lambda_A:A\in\cB\}.
	\]
	Moreover, if $\Pi_B\colon \cB\to\cB$ denotes right multiplication by $B\in\cB$, that is,
	\[
	\Pi_B(X):=XB
	\qquad (X\in\cB),
	\]
	then every $\Lambda_A$ commutes with every $\Pi_B$. Hence this semigroup can coincide with $\End_{\K}(\cB_{\mathrm v})$ only in the trivial one-dimensional case. Thus the maximality statement of Theorem~\ref{thm:main-all-fields} is genuinely Jordan-theoretic.
	
	It is also worth noting that Theorem~\ref{thm:main-all-fields} is stronger than the statement that $\JMS(\cA)$ linearly spans $\End_{\K}(\Av)$. Indeed, for $A,B\in M_n(\K)$, one has
	\begin{equation}\label{eq:intro-LA-comm}
		2\opL_A=\Lambda_A+\Pi_A,
		\qquad
		4[\opL_A,\opL_B]=\Lambda_{[A,B]}-\Pi_{[A,B]},
	\end{equation}
	where $[A,B]:=AB-BA$ denotes the associative commutator. Since every traceless matrix is an associative commutator \cite{AlbertMuckenhoupt1957}, while $I_n$ supplies the scalar part, it follows from \eqref{eq:intro-LA-comm} that the associative subalgebra of $\End_{\K}(\Av)$ generated by the operators $\opL_A$ contains both $\Lambda_C$ and $\Pi_C$ for every $C\in M_n(\K)$. In particular, for standard matrix units $E_{ij},E_{k\ell}\in M_n(\K)$ one has
	\[
	\Lambda_{E_{ij}}\Pi_{E_{k\ell}}(X)=E_{ij}XE_{k\ell}=X_{jk}E_{i\ell},
	\]
	so these operators form a complete system of matrix units for $\End_{\K}(\Av)$. Therefore the associative subalgebra generated by the operators $\opL_A$ is already all of $\End_{\K}(\Av)$. Since $\JMS(\cA)$ consists exactly of finite products of the operators $\opL_A$ and contains $\id_{\Av}=\opL_{I_n}$, it follows that $\spanop_{\K}\JMS(\cA)=\End_{\K}(\Av)$. The content of Theorem~\ref{thm:main-all-fields} is that one does not need to pass even to linear combinations: finite compositions of the operators $\opL_A$ already yield every endomorphism of $\Av$.
	
	\smallskip
	
	For $n=1$ the statement of Theorem~\ref{thm:main-all-fields} is immediate, since $M_1(\K)^+=\K$ and $\opL_A(X)=AX$. Accordingly, the substance of the paper begins with the case $n\ge 2$. The proof then passes through the following intermediate statement.
	
	\begin{theorem}\label{thm:SL-main}
		Let $\K$ be a field of characteristic different from $2$, and let $n\ge 2$. Then
		\[
		\SL(\Av)\subseteq \JMS(\cA).
		\]
		Equivalently, after choosing any $\K$-basis of $\Av$, the semigroup $\JMS(\cA)$ contains every elementary transvection of $\SL_{n^2}(\K)$ and therefore the whole group $\SL_{n^2}(\K)$.
	\end{theorem}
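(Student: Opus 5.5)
The plan is to build the elementary transvections of $\SL(\Av)$, relative to the basis of matrix units $E_{ij}$, directly as products of multiplication operators. The starting point is unipotent operators coming from nilpotent matrices. Let $N=xy^{\mathsf T}$ with $y^{\mathsf T}x=0$, so $N^2=0$. Writing $2\opL_N=\Lambda_N+\Pi_N$ as in \eqref{eq:intro-LA-comm}, we have $\Lambda_N^2=\Pi_N^2=0$, and $\Lambda_N,\Pi_N$ commute. Hence
\[
\opL_N^2=\tfrac12\Lambda_N\Pi_N,\qquad \opL_N^3=0,\qquad \opL_N^2(X)=\tfrac12 NXN=\tfrac12 (y^{\mathsf T}Xx)\,N .
\]
Since $\opL_{I_n+tN}=\id+t\opL_N$, the semigroup contains
\[
(\id+t\opL_N)(\id-t\opL_N)=\id-t^2\opL_N^2\colon X\mapsto X-\tfrac{t^2}{2}(y^{\mathsf T}Xx)\,N .
\]
This is a genuine transvection, because the functional $X\mapsto y^{\mathsf T}Xx$ vanishes on $N$. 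Taking $x=e_i$ and $y=e_j$ with $i\neq j$ gives the operators $X\mapsto X-\tfrac{t^2}{2}X_{ji}E_{ij}$. These are elementary transvections, but with coefficient restricted to the set $-\tfrac12(\K^{\times})^2$.

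The first difficulty is the coefficient set. Products of transvections with the same direction add coefficients, so we obtain all sums of elements of $-(\K^\times)^2$. To reach every $c\in\K$, I would use the identity $c=\bigl(\tfrac{c+1}{2}\bigr)^2-\bigl(\tfrac{c-1}{2}\bigr)^2$. This requires also producing the same transvection with a \emph{positive} square coefficient. To obtain it, I would look for a second factorization: products $(\id+s\opL_N)(\id+t\opL_N)$ with $s\neq -t$, or products with $\opL_{I_n+tN+M}$ for a suitable commuting nilpotent or diagonal perturbation $M$, arranged so that the linear terms in $\opL_N$ cancel and a square term of the opposite sign survives. An alternative is to sandwich $\id-t^2\opL_N^2$ between $\opL_D$ and a second operator $\opL_{D'}$, where $D,D'$ are diagonal and chosen so that the composite acts as the identity off the relevant coordinates. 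The point is that $\opL_D$ scales $E_{k\ell}$ by $(d_k+d_\ell)/2$, so one can rescale the functional and the direction independently.

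The second difficulty is to reach all index patterns. Elementary transvections $X\mapsto X+cX_{pq}E_{ij}$ with $(p,q)$ arbitrary, not just $(p,q)=(j,i)$, must be obtained. Here I would use nilpotents of higher rank, such as $N=E_{ik}+E_{kj}$ or $xy^{\mathsf T}$ with nonstandard $x,y$, whose squares $\opL_N^2$ again have small rank. I would also conjugate by invertible elements of the semigroup whose inverses are themselves in the semigroup. Examples are $\id+t\opL_N$ with inverse $\id-t\opL_N+t^2\opL_N^2=(\id-t\opL_N)(\id+t^2\opL_N^2)$ once arbitrary coefficients are available, and operators $\opL_P$ for permutation-type involutions $P$ with $P^2=I$, for which $\opL_P$ restricted to suitable subspaces is invertible. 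Using the commutator formula for transvections, $[\id+\varphi\otimes V,\id+\psi\otimes W]$, and the fact that the group generated by the available transvections acts irreducibly on $\Av$ (the $\opL_A$ span $\End_\K(\Av)$, as shown in the introduction), I would bootstrap from the patterns $X_{ji}E_{ij}$ to all pairs of basis elements. Finally, I would invoke the standard fact that elementary transvections generate $\SL_{n^2}(\K)$.

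I expect the main obstacle to be the first difficulty: getting arbitrary, rather than minus-square, coefficients without having inverses freely available in the semigroup, especially over non-closed fields such as $\mathbb{R}$ or finite fields. My first attempt would be to find an explicit product of three or four operators $\opL_{I_n+\cdots}$ that equals $\id+s^2\opL_N^2$.
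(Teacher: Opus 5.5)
There is a genuine gap, and it sits exactly where you expect it. Your opening computations are correct: $\opL_N^2=\tfrac12\opU_N$, the transvections $\id-t^2\opL_N^2$, and the difference-of-squares reduction, which is the paper's Lemma~\ref{lem:pm1}. The problem is that the routes you suggest for obtaining positive coefficients provably fail over formally real fields such as $\Q$ or $\mathbb{R}$. Finite fields are not the issue: in characteristic $p$, $p-1$ copies of $-\tfrac12 s^2$ add up to $\tfrac12 s^2$, which is how the paper treats that case, and the same works whenever $-1$ is a sum of squares.

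\emph{Products in the family $\opL_{aI_n+sN}$.} Suppose a product of operators $\opL_{a_iI_n+s_iN}=a_i\id+s_i\opL_N$ equals some $\id+c\,\opL_N^2$. Evaluating at $I_n$ forces $\prod a_i=1$ and $\sum r_i=0$, where $r_i=s_i/a_i$. Then $c=\sum_{i<j}r_ir_j=-\tfrac12\sum r_i^2$, so this family only ever yields minus a sum of squares.

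\emph{Diagonal sandwiches.} A product of diagonal multiplication operators scales $E_{k\ell}$ and $E_{\ell k}$ by the same scalar. So for $N=E_{ij}$ the functional $X\mapsto X_{ji}$ and the direction $E_{ij}$ cannot be rescaled independently, and any unipotent sandwich keeps the coefficient $-t^2/2$.

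\emph{Conjugation by $\id+t\opL_N$.} This needs the inverse $\id-t\opL_N+t^2\opL_N^2$, which already requires a positive coefficient.

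There is also a conceptual obstruction. Over an ordered field, $\Tr\bigl((u_N(c)X)^2\bigr)=\Tr(X^2)+2c\,\Tr(NX)^2$. Hence every product of transvections $u_M(c)$ with $c\le 0$ weakly decreases $X\mapsto\Tr(X^2)$, and no $u_N(c)$ with $c>0$ is such a product. You must leave the nilpotent setting altogether.

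The paper does this with an explicit product over $\Q$ of sixteen factors $\opL_{I_2\pm M}$, most with $M^2$ a nonzero scalar matrix. This product equals $\id+\opU_{E_{12}}$ on $M_2(\Q)$, and is padded to $M_n$ by four diagonal correction factors. The identity then transfers to every field of characteristic $0$ through its prime field. It transfers to every rank-one square-zero $M$ via $\Phi_S\opL_A\Phi_S^{-1}=\opL_{SAS^{-1}}$; this conjugation preserves $\JMS(\cA)$ without $\Phi_S$ being a factor, so no inverses are needed.

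Your second step also fails as stated. That the $\opL_A$ span $\End_\K(\Av)$ says nothing about the group generated by your particular transvections. In fact every $u_N(t)=\id+t\opU_N$ fixes $I_n$ (because $\Tr N=0$) and maps $\mathfrak{sl}_n(\K)$ into itself, so these transvections generate a reducible group. Moreover, an irreducible group generated by transvections need not be $\SL$; symplectic groups are examples.

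To move $I_n$ you need conjugations by operators such as $g_R=\opL_{I_n+R}$, which sends $I_n$ to $I_n+R$. But $g_R^{-1}=(\id+\tfrac12\opU_R)\opL_{I_n-R}$ lies in $\JMS(\cA)$ only once positive coefficients are available, so this step depends on the first.

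With that in hand, the paper obtains all elementary transvections as follows:
\begin{itemize}
\item explicit formulas for $g_Pu_Q(t)g_P^{-1}$ in the basis $(E_{21}+\tfrac12I_2,\,E_{12},\,I_2+E_{12},\,E_{11}-E_{22})$ of $M_2(\K)$;
\item Steinberg commutator relations;
\item corner embeddings into $M_n(\K)$;
\item a path argument on the matrix units;
\item Gaussian elimination to finish.
\end{itemize}
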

	
	This theorem is the conceptual core of the paper.  Recall that a
	\emph{transvection} on a finite-dimensional vector space $W$ is a nontrivial
	unipotent linear operator that fixes pointwise a hyperplane and acts
	nontrivially along a one-dimensional direction; see, e.g.,
	\cite{HahnOMeara1989,Humphries1986}. After choosing a basis
	$(b_1,\dots,b_d)$ of $W$, the most basic examples are the \emph{elementary
		transvections}
	\[
	x_{ij}(t):=\id_W+t\,\varepsilon_{ij}
	\qquad (i\neq j,\ t\in\K),
	\]
	where $\varepsilon_{ij}$ is the rank-one operator sending $b_j$ to $b_i$
	and all other basis vectors to $0$. These elementary transvections are the
	standard root subgroups of $\SL(W)$, and a classical Gaussian-elimination
	argument shows that they generate the whole special linear group. Thus the
	proof of Theorem~\ref{thm:SL-main} reduces to showing that $\JMS(\cA)$
	contains all elementary transvections relative to a suitable basis of $\Av$;
	this reduction is carried out in Lemma~\ref{lem:gauss}.
	
	Sections~2 and~3 implement this strategy. Section~2 constructs the basic transvections intrinsically from Jordan multiplication. For each rank-one square-zero matrix $N$, the associated \emph{quadratic operator}
	\[
	\opU_N\colon \Av\to\Av, \qquad \opU_N(X):=NXN
	\]
	is a rank-one nilpotent endomorphism in $\JMS(\cA)$ (Lemma~\ref{lem:UN-properties}), so that
	\[
	u_N(t):=\id_{\Av}+t\opU_N
	\]
	is a transvection whenever $t\neq 0$. The main result of the section, Proposition~\ref{prop:all-root-transvections}, proves that
	\[
	u_N(t)\in \JMS(\cA)
	\qquad (t\in\K).
	\]
	When $\K$ has characteristic $0$, the key input is a concrete operator identity over $\Q$ expressing $\id_{\Av}+\opU_{E_{12}}$ as a finite product of Jordan multiplication operators; this identity is recorded in Appendix~\ref{app:Q-factorization}. Once this special case is available, the argument extends it to arbitrary rank-one square-zero matrices by conjugation and then to arbitrary parameters by a simple reduction.
	
	Section~3 then shows that these Jordan-theoretic transvections $u_N(t)$ suffice to recover the standard elementary transvections  on $\Av$. The proof first treats the $4$-dimensional space $M_2(\K)$ in a suitable basis, where explicit operator identities produce all elementary transvections; these computations are collected in Appendix~\ref{app:M2-computations} and are used in Proposition~\ref{prop:M2}. One then transports this local $2\times2$ construction to arbitrary corners of $M_n(\K)$ and propagates it to the full standard basis by a commutator argument. Once all elementary transvections are known to lie in $\JMS(\cA)$, Lemma~\ref{lem:gauss} yields Theorem~\ref{thm:SL-main}.
	
	Section~4 completes the passage from Theorem~\ref{thm:SL-main} to Theorem~\ref{thm:main-all-fields}. Once $\SL(\Av)\subseteq \JMS(\cA)$ is established, two further ingredients suffice. First, one studies the determinant map on the group of invertible elements of $\JMS(\cA)$ and shows that every element of $\K^\times$ occurs as the determinant of an invertible element of the semigroup. This implies that every invertible operator on $\Av$ belongs to $\JMS(\cA)$. Second, one exhibits a singular element of rank $n^2-1$ inside $\JMS(\cA)$. Since all invertible operators are now available, this corank-one operator can be moved to arbitrary position by left and right composition, yielding all rank-$(n^2-1)$ endomorphisms; composing such operators then gives every singular endomorphism. Thus $\JMS(\cA)=\End_{\K}(\Av)$. The only genuinely field-dependent point in this final step is the determinant-surjectivity argument; in the finite-field case it is established using standard character-sum estimates.
	
	Only two parts of the proof require explicit computation. Appendix~\ref{app:Q-factorization} gives the operator identity used in the characteristic-zero case of Proposition~\ref{prop:all-root-transvections}, and Appendix~\ref{app:M2-computations} records the exact $M_2$ identities used in Proposition~\ref{prop:M2}. These identities were first found with the assistance of OpenAI's ChatGPT and were subsequently verified by exact symbolic computation. Short computer-algebra verification files accompany the paper as supplementary material.
	
	Finally, the characteristic assumption is essential. In characteristic $2$, the normalized Jordan product is unavailable, and even for the unnormalized symmetrized product $A\diamond B:=AB+BA$ one has $\opL_A(X)=AX+XA=[A,X]$. Hence every nonempty composition annihilates $\K I_n$ and has image contained in $\mathfrak{sl}_n(\K)$. In particular, the full endomorphism semigroup cannot be generated in that setting.
	
	\section{Quadratic operators and rank-one transvections}\label{sec:quadratic}
	This section provides the first structural step in the proof of Theorem~\ref{thm:SL-main}. We show that rank-one square-zero matrices give rise to transvections in $\JMS(\cA)$ via the Jordan quadratic operators. These unipotent elements will be used in the next section to generate the elementary transvections of $\SL(\Av)$.
	
	For the remainder of the paper, we assume that $\K$ is a field of
	characteristic different from $2$ and that $n\ge 2$. We also use the symmetric
	nondegenerate trace pairing
	\[
	\inner{X}{Y}:=\Tr(XY)
	\qquad (X,Y\in \Av).
	\]
	For $A\in\cA$, recall that the \emph{Jordan quadratic operator} is
	\[
	\opU_A\colon \Av\to\Av,
	\qquad
	\opU_A:=2\opL_A^2-\opL_{A^2};
	\]
	see, for example, \cite[Chapter~1]{Jacobson1969}. In the special Jordan algebra $M_n(\K)^+$ this is given by
	\[
	\opU_A(X)=AXA
	\qquad (X\in \Av).
	\]
	In particular, if $N^2=0$, then $\opU_N=2\opL_N^2$. Thus $\opU_N\in\JMS(\cA)$
	for every square-zero matrix $N$. The next lemma identifies the rank-one case relevant for transvections.
	
	\begin{lemma}\label{lem:UN-properties}
		Let $N\in M_n(\K)$ satisfy $N^2=0$, and define
		\[
		\opU_N(X):=NXN
		\qquad (X\in \Av).
		\]
		Then:
		\begin{enumerate}[label={\rm(\alph*)}]
			\item One has
			\[
			\opL_N^2=\frac12\opU_N.
			\]
			In particular, $\opU_N\in \JMS(\cA)$.
			
			\item If moreover $\rk(N)=1$, then
			\[
			\opU_N(X)=\inner{N}{X}N
			\qquad (X\in \Av).
			\]
			In particular, $\rk(\opU_N)=1$ and $\opU_N^2=0$.
		\end{enumerate}
	\end{lemma}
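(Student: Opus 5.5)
Part (a) comes from expanding $\opL_N^2$ directly. For $X\in\Av$ we have
\[
\opL_N^2(X)=\tfrac14\bigl(N(NX+XN)+(NX+XN)N\bigr)=\tfrac14\bigl(N^2X+2NXN+XN^2\bigr).
\]
Since $N^2=0$, this equals $\tfrac12 NXN=\tfrac12\opU_N(X)$. Because $\operatorname{char}\K\neq 2$, we can rescale, so $\opU_N=2\opL_N^2=\opL_{2N}\circ\opL_N$. This is a composition of two multiplication operators, hence $\opU_N\in\JMS(\cA)$. The same conclusion also follows from the general identity $\opU_A=2\opL_A^2-\opL_{A^2}$ with $\opL_{0}=0$, but the one-line computation above already suffices.

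For part (b), write the rank-one matrix as $N=uv^{T}$ with nonzero column vectors $u,v\in\K^n$. Then $N^2=(v^Tu)\,uv^T$, so $N^2=0$ forces $v^Tu=0$; equivalently $\Tr N=0$. Now compute
\[
NXN=u\,(v^TXu)\,v^T=(v^TXu)\,N.
\]
Moreover $\inner{N}{X}=\Tr(uv^TX)=v^TXu$, which gives $\opU_N(X)=\inner{N}{X}N$.

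The remaining claims follow from this formula. The image of $\opU_N$ is contained in $\K N$. Since the trace pairing is nondegenerate and $N\neq0$, the functional $\inner{N}{\cdot}$ is nonzero, so $\rk(\opU_N)=1$. Finally,
\[
\opU_N^2(X)=\inner{N}{X}\,\opU_N(N)=\inner{N}{X}\,\inner{N}{N}\,N,
\]
and $\inner{N}{N}=\Tr(N^2)=0$, so $\opU_N^2=0$; equivalently, $N^3=0$ shows $\opU_N(N)=0$ directly.

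Neither part involves a real obstacle. The only points needing care are using $\operatorname{char}\K\neq2$ to pass from $\opL_N^2$ to $\opU_N$, and recognizing that the rank-one square-zero condition means $v^Tu=0$, which is what makes $\inner{N}{N}$ vanish.
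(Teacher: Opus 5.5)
Your proof is correct and follows the paper's argument essentially verbatim: the same expansion of $\opL_N^2$ for (a), and the same factorization $N=uv^t$ with $\Tr(NX)=v^tXu$ and $\opU_N(N)=N^3=0$ for (b). The differences are cosmetic. You absorb the factor $2$ via $\opL_{2N}\opL_N$ rather than $\opL_{2I_n}\opL_N^2$. You also use nondegeneracy of the trace pairing to justify that $\rk(\opU_N)$ is exactly $1$ rather than $0$, which the paper leaves implicit.
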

	
	\begin{proof}
		(a) For $X\in\Av$,
		\begin{align*}
			\opL_N^2(X)&=\frac14\bigl(N(NX+XN)+(NX+XN)N\bigr)
			=\frac14\bigl(N^2X+2NXN+XN^2\bigr)\\
			&=\frac12NXN.
		\end{align*}
		Thus $\opL_N^2=\frac12\opU_N$. Since $\opL_{2I_n}=2\id_{\Av}$, it follows that
		\[
		\opU_N=\opL_{2I_n}\opL_N^2\in \JMS(\cA).
		\]
		
		\smallskip
		
		(b) Assume in addition that $\rk(N)=1$. Write
		\[
		N=uv^t
		\]
		for some nonzero vectors $u,v\in\K^n$, where, as usual, vectors are treated as
		columns and $v^t$ denotes the transpose of $v$. Then
		\[
		N^2=(v^tu)N,
		\]
		so $N^2=0$ is equivalent to $v^tu=0$. For $X\in\Av$,
		\[
		\opU_N(X)=uv^tXuv^t=(v^tXu)N.
		\]
		Also,
		\[
		\Tr(NX)=\Tr(uv^tX)=\Tr(v^tXu)=v^tXu.
		\]
		Hence
		\[
		\opU_N(X)=\inner{N}{X}N.
		\]
		It follows that $\operatorname{im}(\opU_N)=\K N$, so $\rk(\opU_N)=1$. Finally,
		\[
		\opU_N^2(X)=\inner{N}{X}\opU_N(N)=\inner{N}{X}N^3=0.
		\]
	\end{proof}
	
	Lemma~\ref{lem:UN-properties} shows that rank-one square-zero matrices give rise to rank-one nilpotent endomorphisms of $\Av$. The associated unipotent operators are therefore transvections. 
	
	\begin{remark}\label{rem:root-transvection} 
		As already mentioned in the introduction, a transvection on a finite-dimensional
		vector space $W$ is a nontrivial unipotent operator fixing pointwise a
		hyperplane and acting nontrivially along a one-dimensional direction.
		Equivalently, it is a nontrivial unipotent linear map
		\[
		W\to W,\qquad x\mapsto x+\varphi(x)v,
		\]
		where $v\in W$, $\varphi\in W^*$ is a linear functional, and
		$v\in\ker\varphi$.
		
		If $N\in M_n(\K)$ has rank $1$ and $N^2=0$, then for every $t\in\K$ the operator
		\[
		u_N(t):=\id_{\Av}+t\opU_N
		\]
		satisfies
		\[
		u_N(t)^{-1}=u_N(-t).
		\]
		For $t\neq 0$, Lemma~\ref{lem:UN-properties} gives
		\[
		u_N(t)(X)=X+t\,\inner{N}{X}N,
		\]
		so $u_N(t)$ is a transvection on $\Av$ with direction $\K N$ and fixed hyperplane
		\[
		\ker\bigl(X\mapsto \inner{N}{X}\bigr)=\ker(\opU_N).
		\]
		For $t=0$, one has $u_N(0)=\id_{\Av}$.
	\end{remark}
	
	To pass from the special matrix $E_{12}$ to an arbitrary rank-one square-zero matrix, we will use conjugation. The following lemma shows that both $\opL_A$ and $\opU_A$ behave naturally under this operation.
	\begin{lemma}\label{lem:conjugation-covariance}
		For $S\in\GL_n(\K)$, let
		\[
		\Phi_S(X):=SXS^{-1}
		\qquad (X\in\Av).
		\]
		Then for every $A\in M_n(\K)$ one has
		\[
		\Phi_S\opL_A\Phi_S^{-1}=\opL_{SAS^{-1}}
		\qquad\text{and}\qquad
		\Phi_S\opU_A\Phi_S^{-1}=\opU_{SAS^{-1}}.
		\]
	\end{lemma}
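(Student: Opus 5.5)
The plan is to verify both identities by evaluating each side on an arbitrary $X\in\Av$. The key observation is that $\Phi_S$ is an automorphism of the associative algebra $M_n(\K)$, hence also of the Jordan algebra $\cA$. First note that $\Phi_S$ is invertible with $\Phi_S^{-1}=\Phi_{S^{-1}}$, since $S^{-1}(SXS^{-1})S=X$. Second, $\Phi_S$ is multiplicative: $\Phi_S(XY)=SXS^{-1}SYS^{-1}=\Phi_S(X)\Phi_S(Y)$. Symmetrizing this gives
\[
\Phi_S(A\circ Y)=\Phi_S(A)\circ\Phi_S(Y)\qquad (A,Y\in\Av).
\]

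For the first identity, fix $X\in\Av$ and put $Y:=\Phi_S^{-1}(X)=S^{-1}XS$. Then
\[
\Phi_S\opL_A\Phi_S^{-1}(X)=\Phi_S(A\circ Y)=\Phi_S(A)\circ\Phi_S(Y)=(SAS^{-1})\circ X=\opL_{SAS^{-1}}(X).
\]

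For the second identity there are two routes. One can compute directly: $S\bigl(A(S^{-1}XS)A\bigr)S^{-1}=(SAS^{-1})X(SAS^{-1})$. Alternatively, one can use the definition $\opU_A=2\opL_A^2-\opL_{A^2}$. Conjugation by $\Phi_S$ is an algebra automorphism of $\End_\K(\Av)$, so applying the first identity termwise gives
\[
\Phi_S\opU_A\Phi_S^{-1}=2\opL_{SAS^{-1}}^2-\opL_{SA^2S^{-1}}.
\]
Since $SA^2S^{-1}=(SAS^{-1})^2$, the right-hand side equals $\opU_{SAS^{-1}}$.

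There is no real obstacle here; the only point requiring care is the bookkeeping of $S$ versus $S^{-1}$ when computing $\Phi_S^{-1}$.
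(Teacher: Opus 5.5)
Your proposal is correct and matches the paper's proof: both evaluate each side on an arbitrary $X$ using $\Phi_S^{-1}(X)=S^{-1}XS$, and phrasing the first identity through the multiplicativity of $\Phi_S$ is the same computation. Your alternative route for $\opU$ via $\opU_A=2\opL_A^2-\opL_{A^2}$ is also valid, though the paper only uses the direct computation.
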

	
	\begin{proof}
		For fixed $X\in\Av$ we have
		\begin{align*}
			\Phi_S\opL_A\Phi_S^{-1}(X)
			&=
			S\left(A\circ(S^{-1}XS)\right)S^{-1}
			=
			\frac{SAS^{-1}X+XSAS^{-1}}{2}\\
			&=
			\opL_{SAS^{-1}}(X).
		\end{align*}
		Similarly,
		\begin{align*}
			\Phi_S\opU_A\Phi_S^{-1}(X)
			&=
			S\bigl(A(S^{-1}XS)A\bigr)S^{-1}
			=
			(SAS^{-1})X(SAS^{-1})\\
			&= \opU_{SAS^{-1}}(X).
		\end{align*}
	\end{proof}
	
	The next lemma shows that it is enough to construct the two special operators $\id_{\Av}\pm\opU_M$. Once these are available for every rank-one square-zero matrix $M$, the full family $\id_{\Av}+t\opU_N$ follows formally.
	\begin{lemma}\label{lem:pm1}
		Assume that for every rank-one square-zero matrix $M\in M_n(\K)$ one has
		\[
		\id_{\Av}\pm\opU_M\in\JMS(\cA).
		\]
		Then for every rank-one square-zero matrix $N\in M_n(\K)$ and every $t\in\K$,
		\[
		\id_{\Av}+t\opU_N\in\JMS(\cA).
		\]
	\end{lemma}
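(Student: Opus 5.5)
Since $\JMS(\cA)$ is closed under composition, we want to write $\id_{\Av}+t\opU_N$ as a product of operators already in $\JMS(\cA)$. The idea is to change the scale of $\opU_N$ by conjugating with invertible elements of $\JMS(\cA)$, and then use additivity in the parameter.

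First, by Lemma~\ref{lem:UN-properties}(b) we have $\opU_M\opU_{M'}=0$ whenever $\inner{M'}{M}=0$. In particular $\opU_{cN}=c^2\opU_N$ and $\opU_N^2=0$. Hence $u_N(s)u_N(s')=u_N(s+s')$, so $\{t: u_N(t)\in\JMS(\cA)\}$ is an additive submonoid of $\K$. By hypothesis it contains $\pm c^2$ for every $c\in\K^\times$, applied to $M=cN$.

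Next, we treat arbitrary $t$. In characteristic $\ne2$ every $t\in\K$ is a difference of squares:
\[
t=\Bigl(\frac{t+1}{2}\Bigr)^2-\Bigl(\frac{t-1}{2}\Bigr)^2.
\]
If both $\frac{t\pm1}{2}$ are nonzero, then
\[
u_N(t)=u_{aN}(1)\,u_{bN}(-1),\qquad a=\tfrac{t+1}{2},\ b=\tfrac{t-1}{2},
\]
and both factors lie in $\JMS(\cA)$ by hypothesis. If $t=\pm1$ the claim is the hypothesis itself, and $t=0$ gives $\id_{\Av}=\opL_{I_n}$. This closes all cases.

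The only point needing care is the degenerate case where one of $a,b$ vanishes, which is exactly $t=\pm1$ and is covered directly. In particular, no conjugation via Lemma~\ref{lem:conjugation-covariance} is needed here, and fields with few squares (such as finite fields) cause no trouble, because the difference-of-squares identity holds in every field of characteristic $\ne 2$.
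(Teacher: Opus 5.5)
Your proof is correct and is essentially the paper's argument: write $t=\bigl(\frac{t+1}{2}\bigr)^2-\bigl(\frac{t-1}{2}\bigr)^2$ and use $\opU_{zN}=z^2\opU_N$ together with $\opU_N^2=0$ to get $u_N(t)=(\id_{\Av}+a^2\opU_N)(\id_{\Av}-b^2\opU_N)$. The only differences are cosmetic. The paper handles a vanishing $a$ or $b$ uniformly, since that factor is then just $\id_{\Av}$, instead of splitting off $t=\pm1$. Your opening remark about conjugating by invertible elements is never actually used.
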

	
	\begin{proof}
		Fix $N$ and $t$. Since $\operatorname{char}\K\ne 2$, we may write
		\[
		t=x^2-y^2,
		\qquad \text{where} \qquad  
		x:=\frac{t+1}{2},
		\qquad
		y:=\frac{t-1}{2}.
		\]
		Let $z\in\K$. If $z=0$, then $\id_{\Av}\pm z^2\opU_N=\id_{\Av}\in\JMS(\cA)$. If $z\ne 0$, then $zN$ is again rank one and square-zero, so $\id_{\Av}\pm\opU_{zN}\in\JMS(\cA)$. Since $\opU_{zN}=z^2\opU_N$, we have in all cases
		\[
		\id_{\Av}\pm z^2\opU_N\in\JMS(\cA), \qquad \text{for all } z\in\K.
		\]
		Applying this with $z=x$ and $z=y$, and using $\opU_N^2=0$, we get
		\[
		(\id_{\Av}+x^2\opU_N)(\id_{\Av}-y^2\opU_N)
		=\id_{\Av}+(x^2-y^2)\opU_N
		=\id_{\Av}+t\opU_N\in\JMS(\cA).
		\]
	\end{proof}
	
	It remains to construct the two operators $\id_{\Av}\pm\opU_M$ for rank-one square-zero matrices $M$. We begin with the basic rank-one square-zero matrix $M=E_{12}$ over $\Q$.
	\begin{lemma}\label{lem:E12-over-Q}
		For every $n\ge 2$, we have
		\[
		\id_{\Av}\pm \opU_{E_{12}}\in\JMS(M_n(\Q)^+).
		\]
	\end{lemma}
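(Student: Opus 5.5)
The first sign is easy and gives an explicit factorization. Write $N:=E_{12}$. For $a\in\Q$ we have $\opL_{I_n+aN}=\id_{\Av}+a\opL_N$. By Lemma~\ref{lem:UN-properties}(a), $\opL_N^2=\tfrac12\opU_N$. Moreover $\opL_N^3=0$, since $\opL_N^3(X)=\tfrac14(N\cdot NXN+NXN\cdot N)=0$ because $N^2=0$. Hence, for $a_1,\dots,a_k\in\Q$,
\[
\prod_{i=1}^k\opL_{I_n+a_iN}=\id_{\Av}+e_1\opL_N+e_2\opL_N^2
=\id_{\Av}+e_1\opL_N+\tfrac{e_2}{2}\opU_N ,
\]
where $e_1,e_2$ are the elementary symmetric polynomials in the $a_i$. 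We need $e_1=0$ and $e_2=-2$. Since $e_2=\tfrac12\bigl(e_1^2-\sum a_i^2\bigr)$, this amounts to $\sum a_i=0$ and $\sum a_i^2=4$. The choice $(a_1,a_2,a_3,a_4)=(1,1,-1,-1)$ works, giving
\[
\id_{\Av}-\opU_{E_{12}}=\opL_{I_n+E_{12}}^2\,\opL_{I_n-E_{12}}^2\in\JMS(M_n(\Q)^+).
\]

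The sign $+$ is the real difficulty. The same computation shows it cannot come from unipotent factors $\opL_{I_n+aN}$ with $N$ fixed, since that would require $\sum a_i^2<0$. My plan is to realize $\id_{\Av}+\opU_{E_{12}}=(\id_{\Av}-\opU_{E_{12}})^{-1}$ as a product of elements already in $\JMS(M_n(\Q)^+)$. The tool is that if $g\in\JMS$ has finite order $m$, then $g^{-1}=g^{m-1}\in\JMS$. Concretely, I would look for an invertible $g\in\JMS$ of finite order with $x^{-1}$ expressible through $g$, where $x=\id_{\Av}-\opU_{E_{12}}$. The model is $\SL_2(\Z)$: if $y$ is a second transvection with $xyx=yxy=:w$ and $w^4=\id$, then $x^{-1}=yxw^3$.

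The first candidates are $y=\id_{\Av}-\opU_{M'}$, with $M'$ a rank-one square-zero matrix in the upper-left $2\times2$ corner. Suitable $M'$ are obtained from Lemma~\ref{lem:conjugation-covariance}, and such $y$ lie in $\JMS$ by the same four-factor identity. If the Gram matrix of $\{E_{12},M'\}$ under $\inner{\cdot}{\cdot}$ is nondegenerate, both transvections act trivially on a complement of $W=\spanop\{E_{12},M'\}$. The computation then reduces to a $2\times2$ matrix calculation on $W$.

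Here I expect the main obstacle. For $x=\id-a\opU_M$ and $y=\id-b\opU_{M'}$ one gets $\Tr(xy|_W)=2+ab\inner{M}{M'}^2$. With $a,b>0$ forced by the first step, this trace is at least $2$, so $xy$ never has finite order. Thus two unipotent factors of this type cannot suffice.

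I would therefore try next to bring in non-unipotent invertible operators $\opL_A$. Natural candidates are $A$ diagonal, or $A$ a finite-order element such as a signed permutation matrix in the corner. The aim is a product $g\in\JMS$, with $g^{-1}\in\JMS$, satisfying $g\,\opU_{E_{12}}\,g^{-1}=-\opU_{E_{12}}$; then $\id_{\Av}+\opU_{E_{12}}=g(\id_{\Av}-\opU_{E_{12}})g^{-1}$. Diagonal $\opL_D$ alone cannot achieve this: they act on $E_{ij}$ and $E_{ji}$ by the same scalar $(d_i+d_j)/2$, which forces the wrong sign. So $g$ must mix the $E_{12}$ and $E_{21}$ coordinates with $E_{11},E_{22}$.

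Everything can be confined to the corner $M_2(\Q)\subseteq M_n(\Q)$, since the operators $\opL_A$ with $A$ supported there (plus $I_{n-2}$ outside) act blockwise. If no conceptual choice of $g$ emerges, I would fall back on a direct search: look for a short product of operators $\opL_{A_i}$, with $A_i\in M_2(\Q)\oplus I_{n-2}$, equal to $\id+\opU_{E_{12}}$. This is a finite polynomial system in the entries of the $A_i$, and any solution can be verified by exact symbolic computation.
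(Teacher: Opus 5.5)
\textbf{The plus sign is not proved.} Your argument for the minus sign is correct and is the paper's: the operators $\opL_{I+aE_{12}}=\id_{\Av}+a\opL_{E_{12}}$ commute, so $\opL_{I+E_{12}}^2\opL_{I-E_{12}}^2=(\opL_{I+E_{12}}\opL_{I-E_{12}})^2=\id_{\Av}-\opU_{E_{12}}$. The plus sign is the real content of the lemma, and there the proposal is only a search plan. You correctly show why unipotent factors $\opL_{I+aN}$, and pairs of transvections with positive coefficients, cannot work. But you never exhibit an element of $\JMS(M_n(\Q)^+)$ equal to $\id_{\Av}+\opU_{E_{12}}$, nor a sign-reversing conjugator $g$ with $g^{-1}\in\JMS(M_n(\Q)^+)$.

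The paper closes this with the explicit sixteen-factor product of Appendix~\ref{app:Q-factorization}. That product in fact realizes your conjugation idea, using non-unipotent factors you did not try. Group it into commuting pairs $P(x,y):=\opL_{I+K}\opL_{I-K}=\id-\opL_K^2=(1-\tfrac{xy}{2})\id-\tfrac12\opU_K$ with $K=xE_{12}+yE_{21}$, so that $K^2=xyI_2$. Each $P(x,y)$ has the following properties:
\begin{itemize}
\item it preserves $\spanop\{E_{11},E_{22}\}$ and $\spanop\{E_{12},E_{21}\}$;
\item it acts by $1-xy$ on $I_2$ and fixes $E_{11}-E_{22}$;
\item it is not unipotent when $xy\neq0$ (for $xy=3$ the eigenvalue on $I_2$ is $-2$), so your positivity obstruction does not apply.
\end{itemize}
Put $g:=P(\tfrac32,\tfrac12)P(3,1)P(1,3)$. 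One checks that $g$ fixes $E_{11}$ and $E_{22}$, sends $E_{12}\mapsto 10E_{12}-E_{21}$ and $E_{21}\mapsto E_{12}$, and has inverse $g^{-1}=P(\tfrac12,\tfrac32)P(1,3)P(3,1)$. The paper's identity on $M_2(\Q)$ is then exactly $\id+\opU_{E_{12}}=g\,(\id-\opU_{E_{21}})\,g^{-1}$, where $\id-\opU_{E_{21}}=P(0,1)^2$ comes from your own minus-sign construction. Contrary to your last heuristic, this $g$ does not mix diagonal and off-diagonal coordinates.

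\textbf{The passage to $n\ge 3$ is not automatic.} For $\widehat B=\diag(B,I_{n-2})$, the operator $\opL_{\widehat B}$ preserves the block decomposition of $M_n(\Q)$. However, on the off-diagonal blocks $Q\in M_{2,n-2}(\Q)$ and $R\in M_{n-2,2}(\Q)$ it acts by $Q\mapsto CQ$ and $R\mapsto RC$ with $C=(B+I_2)/2$, not trivially. So an identity on $M_2(\Q)$ does not transfer for free; in the picture above, the padded $g^{-1}$ is no longer the inverse of the padded $g$. In the paper each pair contributes $(I_2+\tfrac12K)(I_2-\tfrac12K)=(1-\tfrac{xy}{4})I_2$, so the padded product scales $Q$ and $R$ by $\tfrac{169}{65536}$. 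Four further factors $\opL_{D_j}$ with $D_j=\diag(v_jI_2,I_{n-2})$, $v_1v_2v_3v_4=1$ and $\prod_j\tfrac{1+v_j}{2}=\tfrac{65536}{169}$ (Lemma~\ref{lem:scalar-4-factor}) cancel this scalar without affecting the corner or the lower-right block.
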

	
	\begin{proof}
		Appendix~\ref{app:Q-factorization} gives an explicit factorization of
		$\id_{\Av}+\opU_{E_{12}}$ on $M_2(\Q)$, and then extends it to
		arbitrary $n\ge 2$ by padding. The minus sign is obtained from the
		elementary identity
		\[
		\id_{\Av}-\opU_{E_{12}}
		=
		\bigl(\opL_{I+E_{12}}\opL_{I-E_{12}}\bigr)^2.
		\]
	\end{proof}
	We can now assemble the preceding ingredients into the main result of the section.
	
	\begin{proposition}\label{prop:all-root-transvections}
		Let $\K$ be a field of characteristic different from $2$, and let $N\in M_n(\K)$ satisfy $\rk(N)=1$ and $N^2=0$. Then for all $t\in\K$,
		\[
		u_N(t):=\id_{\Av}+t\opU_N\in\JMS(\cA).
		\]
	\end{proposition}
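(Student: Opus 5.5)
The plan is to reduce, via Lemma~\ref{lem:pm1}, to showing that $\id_{\Av}\pm\opU_M\in\JMS(\cA)$ for every rank-one square-zero $M\in M_n(\K)$. The reduction to $M=E_{12}$ is by conjugation. Any rank-one square-zero $M=uv^t$ with $v^tu=0$ is similar to $E_{12}$: extend $u$ to a basis whose second dual vector is a suitable multiple of $v$. So there is $S\in\GL_n(\K)$ with $M=SE_{12}S^{-1}$. By Lemma~\ref{lem:conjugation-covariance}, conjugation by $\Phi_S$ sends each generator $\opL_A$ to the generator $\opL_{SAS^{-1}}$. Hence $\Phi_S\,\JMS(\cA)\,\Phi_S^{-1}=\JMS(\cA)$, and
\[
\Phi_S(\id_{\Av}\pm\opU_{E_{12}})\Phi_S^{-1}=\id_{\Av}\pm\opU_M .
\]
It therefore suffices to treat $M=E_{12}$ over $\K$.

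The minus sign works uniformly over any $\K$ of characteristic $\neq2$. Since $\opL_{I\pm E_{12}}=\id_{\Av}\pm\opL_{E_{12}}$ commute, Lemma~\ref{lem:UN-properties}(a) gives
\[
\opL_{I+E_{12}}\opL_{I-E_{12}}=\id_{\Av}-\opL_{E_{12}}^2=\id_{\Av}-\tfrac12\opU_{E_{12}} .
\]
Squaring and using $\opU_{E_{12}}^2=0$ yields $\id_{\Av}-\opU_{E_{12}}$. Applying the same identity to $zE_{12}$ for $z\in\K^\times$ gives $\id_{\Av}-z^2\opU_{E_{12}}\in\JMS(\cA)$ for every $z\in\K$. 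Because $\opU_{E_{12}}^2=0$, products of such operators give $\id_{\Av}-s\,\opU_{E_{12}}\in\JMS(\cA)$ for every $s$ that is a finite sum of squares in $\K$.

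For the plus sign I split by characteristic. If $\operatorname{char}\K=p>2$, then $-1$ is a sum of two squares in $\F_p\subseteq\K$, by the usual pigeonhole count of the sets $\{a^2\}$ and $\{-1-b^2\}$. So $\id_{\Av}+\opU_{E_{12}}$ is obtained from the previous paragraph with no appendix input. If $\operatorname{char}\K=0$, then $\Q\subseteq\K$, and Lemma~\ref{lem:E12-over-Q} gives a factorization of $\id_{\Av}+\opU_{E_{12}}$ as a product of operators $\opL_A$ with $A\in M_n(\Q)$. This is an identity of matrices with rational entries, hence remains valid after extending scalars to $\K$. Each factor is then $\opL_A$ for $A\in M_n(\K)$, and so $\id_{\Av}+\opU_{E_{12}}\in\JMS(\cA)$.

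Combining the two signs, the conjugation step, and Lemma~\ref{lem:pm1} yields $u_N(t)\in\JMS(\cA)$ for all $N$ and $t$. The only delicate point is the plus sign, because $-1$ need not be a sum of squares; this is exactly where characteristic $0$ requires the explicit identity of Lemma~\ref{lem:E12-over-Q}. Otherwise I would need to check only that the base-change from $\Q$ to $\K$ is harmless, which holds because the identity is a polynomial identity in matrix entries.
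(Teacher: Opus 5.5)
Your proposal is correct and follows essentially the paper's proof. It reduces via Lemma~\ref{lem:pm1}, uses the identity $\opL_{I+M}\opL_{I-M}=\id_{\Av}-\tfrac12\opU_M$ in odd characteristic, uses the rational factorization of Lemma~\ref{lem:E12-over-Q} with base change in characteristic $0$, and conjugates by $\Phi_S$ to reach arbitrary $M$. The differences are cosmetic. You conjugate first in every characteristic and get $-1$ as a sum of two squares in $\F_p$, whereas the paper works with $M$ directly in characteristic $p$ and uses that $-\tfrac12$ generates $\F_p$ additively. Indeed $-1=(p-1)\cdot 1^2$ already, so the pigeonhole argument is not needed.
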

	\begin{proof}
		By Lemma~\ref{lem:pm1}, it is enough to prove that $\id_{\Av}\pm\opU_M\in\JMS(\cA)$ for every rank-one square-zero matrix $M$. We treat separately the cases $\operatorname{char}\K>0$ and $\operatorname{char}\K=0$. In positive characteristic, the operators $\id_{\Av}\pm\opU_M$ are obtained directly from a simple explicit element of $\JMS(\cA)$. In characteristic $0$, one first establishes the corresponding identities for the matrix $E_{12}$ over $\Q$ and then transfers them to an arbitrary rank-one square-zero matrix by conjugation.
		
		Assume first that $\operatorname{char}\K=p>2$. By Lemma~\ref{lem:UN-properties},
		\[
		\opL_{I_n+M}\opL_{I_n-M}=\id_{\Av}-\opL_M^2=\id_{\Av}-\frac12\opU_M\in\JMS(\cA).
		\]
		Since $\opU_M^2=0$, repeated multiplication gives
		\[
		\bigl(\id_{\Av}-\tfrac12\opU_M\bigr)^r=\id_{\Av}-\frac r2\,\opU_M
		\qquad (r\ge 0).
		\]
		Since $-1/2\neq 0$ in $\F_p$, it generates the additive group of the prime field $\F_p$. Hence $\id_{\Av}+s\opU_M\in\JMS(\cA)$ for every $s\in\F_p$, in particular for $s=\pm 1$.
		
		Now assume that $\operatorname{char}\K=0$. Then $\Q$ identifies canonically
		with the prime subfield of $\K$. By Lemma~\ref{lem:E12-over-Q}, there exist
		matrices $A_1,\dots,A_m\in M_n(\Q)$ such that
		\[
		\opL_{A_1}\cdots \opL_{A_m}
		=
		\id_{\Av}\pm\opU_{E_{12}}
		\]
		as $\Q$-linear endomorphisms of $M_n(\Q)$. We may therefore regard the same
		matrices $A_1,\dots,A_m$ as elements of $M_n(\K)$. Since the displayed identity
		is an exact equality of operator matrices with entries in $\Q$, the same
		equality holds over $\K$. Therefore
		\[
		\id_{\Av}\pm\opU_{E_{12}}\in\JMS(\cA).
		\]
		
		It remains to pass from the matrix $E_{12}$ to an arbitrary rank-one square-zero matrix. The following argument uses Lemma~\ref{lem:conjugation-covariance} to transport the identities already established for $E_{12}$ to the general case. Indeed, let $M$ be any rank-one square-zero matrix. Choose $0\neq b_1\in\im(M)$.
		Since $\rk(M)=1$, we have $\im(M)=\K b_1$. Because $M\neq 0$, there exists
		$b_2\in\K^n$ such that $Mb_2=b_1$. Also $M^2=0$ implies $\im(M)\subseteq\ker(M)$,
		so $b_1\in\ker(M)$. As $\dim\ker(M)=n-1$, we may extend $b_1$ to a basis $b_1,b_3,\dots,b_n$
		of $\ker(M)$. Then $(b_1,b_2,b_3,\dots,b_n)$ is a basis of $\K^n$, and relative to this basis the linear map $M$ sends $b_2$ to $b_1$ and annihilates $b_1,b_3,\dots,b_n$. Hence the matrix of $M$
		in this basis is exactly $E_{12}$. Thus there exists $S\in\GL_n(\K)$ such that
		\[
		M=SE_{12}S^{-1}.
		\] 
		By Lemma~\ref{lem:conjugation-covariance},
		\[
		\Phi_S\opU_{E_{12}}\Phi_S^{-1}=\opU_M.
		\]
		By Lemma~\ref{lem:E12-over-Q}, for each choice of sign there exist
		$A_1,\ldots,A_m\in M_n(\Q)^+$ such that
		\[
		T:=\opL_{A_1}\cdots \opL_{A_m}
		=
		\id_{\Av}\pm\opU_{E_{12}}.
		\]
		Then
		\[
		\Phi_S T \Phi_S^{-1}
		=
		\opL_{SA_1S^{-1}}\cdots \opL_{SA_mS^{-1}}\in\JMS(\cA),
		\]
		while
		\[
		\Phi_S T \Phi_S^{-1}
		=
		\Phi_S(\id_{\Av}\pm\opU_{E_{12}})\Phi_S^{-1}
		=
		\id_{\Av}\pm\opU_M.
		\]
		Therefore $\id_{\Av}\pm\opU_M\in\JMS(\cA)$, so Lemma~\ref{lem:pm1} completes the proof.
	\end{proof}
	
	\begin{remark}\label{rem:algclosed-root-transvections}
		If $\K$ is algebraically closed, the proof of
		Proposition~\ref{prop:all-root-transvections} becomes much simpler. Indeed, if
		$N^2=0$, then for every $s\in\K$ one has $\opL_{I_n+sN}=\id_{\Av}+s\opL_N$, and hence
		\[
		\opL_{I_n+sN}\opL_{I_n-sN}
		=
		(\id_{\Av}+s\opL_N)(\id_{\Av}-s\opL_N)
		=
		\id_{\Av}-s^2\opL_N^2
		=
		\id_{\Av}-\frac{s^2}{2}\opU_N.
		\]
		Given $t\in\K$, choose $s\in\K$ with $s^2=-2t$. Then
		\[
		u_N(t)=\id_{\Av}+t\opU_N
		=
		\opL_{I_n+sN}\opL_{I_n-sN}\in\JMS(\cA).
		\]
	\end{remark}
	
	\section{From root transvections to \texorpdfstring{$\SL(\Av)$}{SL(A\_v)}}\label{sec:SL}
	
	In this section we prove Theorem~\ref{thm:SL-main}. We first work on the $4$-dimensional space $M_2(\K)$, equipped with a suitable basis, and show that the transvections constructed in Section~2 already yield all elementary transvections there. We then transfer this construction to arbitrary $2\times2$ corners of $M_n(\K)$ and use a simple propagation argument to reach the full standard basis of $\Av$. It follows that $\JMS(\cA)$ contains all elementary transvections of $\SL(\Av)$, and hence $\SL(\Av)$ itself.
	
	For every rank-one square-zero matrix $R$ we set
	\[
	g_R:=\opL_{I_n+R}.
	\]
	The operators $g_R$ and their inverses will be used to conjugate the transvections constructed in the previous section. We begin by recording that these inverses still lie in the semigroup.
	
	For invertible operators $P,Q$ we write
	\[
	[P,Q]_{\mathrm{grp}}:=PQP^{-1}Q^{-1}
	\]
	for their group commutator.
	
	\begin{lemma}\label{lem:gR-inverse}
		Let $R\in M_n(\K)$ satisfy $\rk(R)=1$ and $R^2=0$. Then
		\[
		g_R^{-1}=\id_{\Av}-\opL_R+\opL_R^2=\Bigl(\id_{\Av}+\frac12\opU_R\Bigr)\opL_{I_n-R}\in\JMS(\cA).
		\]
	\end{lemma}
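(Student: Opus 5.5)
The plan is a direct computation that rests on the facts that $\opL$ is linear in its subscript and that $\opL_R$ is nilpotent of order $3$.

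\emph{Step 1: nilpotency.} Since $\opL_{I_n}=\id_{\Av}$, we have $g_R=\opL_{I_n+R}=\id_{\Av}+\opL_R$ and $\opL_{I_n-R}=\id_{\Av}-\opL_R$. By Lemma~\ref{lem:UN-properties}(a), $\opL_R^2=\tfrac12\opU_R$. Hence
\[
\opL_R^3=\tfrac12\opL_R\opU_R,
\]
and for every $X\in\Av$,
\[
\opL_R\opU_R(X)=\tfrac12\bigl(R\,RXR+RXR\,R\bigr)=0
\]
because $R^2=0$. So $\opL_R^3=0$. This uses only $R^2=0$; the rank-one hypothesis is not needed here.

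\emph{Step 2: the inverse formula.} Using $\opL_R^3=0$,
\[
(\id_{\Av}+\opL_R)(\id_{\Av}-\opL_R+\opL_R^2)=\id_{\Av}+\opL_R^3=\id_{\Av}.
\]
The factors are polynomials in $\opL_R$, so they commute, and the product in the other order is also $\id_{\Av}$. Therefore $g_R$ is invertible and $g_R^{-1}=\id_{\Av}-\opL_R+\opL_R^2$.

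\emph{Step 3: the factorization.} Expanding, again with $\opL_R^3=0$,
\[
\bigl(\id_{\Av}+\opL_R^2\bigr)\bigl(\id_{\Av}-\opL_R\bigr)=\id_{\Av}-\opL_R+\opL_R^2-\opL_R^3=\id_{\Av}-\opL_R+\opL_R^2.
\]
Substituting $\opL_R^2=\tfrac12\opU_R$ gives
\[
g_R^{-1}=\bigl(\id_{\Av}+\tfrac12\opU_R\bigr)\opL_{I_n-R}.
\]

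\emph{Step 4: membership in $\JMS(\cA)$.} Here the rank-one hypothesis enters. Proposition~\ref{prop:all-root-transvections} with $t=\tfrac12$ (legitimate since $\operatorname{char}\K\neq2$) gives
\[
u_R(\tfrac12)=\id_{\Av}+\tfrac12\opU_R\in\JMS(\cA).
\]
The factor $\opL_{I_n-R}$ is itself a generator of $\JMS(\cA)$. Since $\JMS(\cA)$ is closed under composition, $g_R^{-1}\in\JMS(\cA)$.

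No step is a real obstacle: the only nonformal input is Proposition~\ref{prop:all-root-transvections}. The one point needing care is the vanishing $\opL_R\opU_R=0$ in Step 1, which makes both expansions exact.
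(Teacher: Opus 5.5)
Your proposal is correct and follows the paper's proof essentially step for step. Both arguments derive $\opL_R^3=0$ from $R^2=0$, read off $g_R^{-1}$ as a truncated geometric series, and rewrite it as $\bigl(\id_{\Av}+\tfrac12\opU_R\bigr)\opL_{I_n-R}$; the paper checks $\opU_R\opL_R=0$ for this, which is the same fact as your $\opL_R^3=0$ since $\opU_R=2\opL_R^2$. Both then conclude via Proposition~\ref{prop:all-root-transvections} with $t=\tfrac12$, and your note that the commuting factors make the inverse two-sided is a detail the paper leaves implicit.
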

	
	\begin{proof}
		By Lemma~\ref{lem:UN-properties}, $\opL_R^2=\frac12\opU_R$. Also,
		\[
		g_R=\opL_{I_n+R}=\id_{\Av}+\opL_R.
		\]
		Moreover,
		\[
		\opL_R^3(X)=\frac12\opL_R(\opU_R(X))=\frac14\bigl(R(RXR)+(RXR)R\bigr)=0
		\]
		for every $X\in\Av$, because $R^2=0$. Therefore
		\[
		(\id_{\Av}+\opL_R)(\id_{\Av}-\opL_R+\opL_R^2)=\id_{\Av}+\opL_R^3=\id_{\Av},
		\]
		so $g_R^{-1}=\id_{\Av}-\opL_R+\opL_R^2$. Since $\opL_{I_n-R}=\id_{\Av}-\opL_R$, it remains to check that
		\[
		\left(\id_{\Av}+\frac12\opU_R\right)\opL_{I_n-R}=\id_{\Av}-\opL_R+\frac12\opU_R.
		\]
		But
		\[
		\opU_R\opL_R(X)=R\left(\frac{RX+XR}{2}\right)R=\frac12(R^2XR+RXR^2)=0,
		\]
		so indeed 
		\[
		(\id_{\Av}+\frac12\opU_R)\opL_{I_n-R}=\id_{\Av}-\opL_R+\frac12\opU_R.
		\]
		Since $u_R(1/2)=\id_{\Av}+\frac12\opU_R$ belongs to $\JMS(\cA)$ by Proposition~\ref{prop:all-root-transvections}, the inverse lies in $\JMS(\cA)$.
	\end{proof}
	
	We first work on the $4$-dimensional space $M_2(\K)$ in a suitable basis. The next proposition shows that the transvections constructed in Section~2 already suffice there to produce all elementary transvections.
	
	\begin{proposition}\label{prop:M2}
		Define an ordered basis $\cE=(B_1,B_2,B_3,B_4)$ of $M_2(\K)$ by
		\[
		B_1:=E_{21}+\frac12I_2,
		\qquad
		B_2:=E_{12},
		\qquad
		B_3:=I_2+E_{12},
		\qquad
		B_4:=E_{11}-E_{22}.
		\]
		For $1\le r\neq s\le 4$ and $t\in\K$, let $x_{rs}(t)$ be the elementary transvection relative to $\cE$:
		\[
		x_{rs}(t)(B_s)=B_s+tB_r,
		\qquad
		x_{rs}(t)(B_k)=B_k\quad (k\neq s).
		\]
		Then
		\[
		x_{rs}(t)\in\JMS(M_2(\K)^+)
		\qquad (1\le r\neq s\le 4,\ t\in\K).
		\]
	\end{proposition}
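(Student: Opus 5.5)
We will work throughout with the operators already known to lie in $\JMS(M_2(\K)^+)$: the root transvections $u_N(t)=\id+t\opU_N$ from Proposition~\ref{prop:all-root-transvections}, the invertible elements $g_R=\opL_{I_2+R}$, and their inverses $g_R^{-1}\in\JMS$ from Lemma~\ref{lem:gR-inverse}. Since $\JMS$ is a monoid, every product $g_R^{\pm1}u_N(t)g_R^{\mp1}$, and every group commutator of such elements, lies in $\JMS$. The plan is to write each of these operators as a $4\times 4$ matrix in the basis $\cE$. We then check that suitable ones are elementary transvections $x_{rs}(t)$. Finally we close up under commutators, using the Steinberg relation $[x_{rs}(a),x_{sq}(b)]_{\mathrm{grp}}=x_{rq}(ab)$ for distinct $r,s,q$.

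\textbf{Step 1: direct root transvections.} By Lemma~\ref{lem:UN-properties}(b), $u_N(t)(X)=X+t\Tr(NX)N$, so it is an elementary transvection in $\cE$ exactly when $N$ is proportional to a basis vector $B_r$ and the functional $X\mapsto\Tr(NX)$ is proportional to the dual coordinate $B_s^*$. For $N=E_{12}=B_2$, the functional is $X\mapsto X_{21}$. On the basis it takes the values $\Tr(E_{12}B_1)=1$, $\Tr(E_{12}B_2)=0$, $\Tr(E_{12}B_3)=0$ and $\Tr(E_{12}B_4)=0$. Hence $u_{E_{12}}(t)=x_{21}(t)$ for all $t$. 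This explains the odd choice $B_1=E_{21}+\frac12I_2$ and $B_3=I_2+E_{12}$: the basis is engineered so that the available root transvections become elementary. Next I will compute $g_R^{\pm1}$ for $R=E_{12},E_{21}$ and the rank-one square-zero matrices $R=\begin{pmatrix}a&b\\c&-a\end{pmatrix}$ with $a^2+bc=0$. I will then look for conjugates $g_Ru_N(t)g_R^{-1}=\id+t\,(g_R\opU_N g_R^{-1})$, which are again transvections, with direction $g_R(N)$ and functional $\Tr(N\,g_R^{-1}(\cdot))$. The goal is to land on further elementary ones such as $x_{12}$, $x_{r4}$ and $x_{4s}$.

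\textbf{Step 2: generate all $x_{rs}$.} Once a set of pairs $(r,s)$ is obtained whose directed graph on $\{1,2,3,4\}$ is strongly connected, and each pair comes with all parameters $t\in\K$, the commutator relation produces every $x_{rq}(t)$. Here we use that $t=ab$ ranges over all of $\K$. If some conjugates only give transvections that are elementary up to a fixed upper-triangular change, I will instead multiply them by already obtained $x_{rs}(\cdot)$ to cancel the unwanted entries.

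\textbf{Main obstacle.} The main obstacle is Step 1: finding explicit words whose matrices in $\cE$ are exactly elementary transvections, uniformly in $t$. Generic conjugates are transvections with direction and hyperplane not aligned to $\cE$. The paper indicates these identities are found by computer (Appendix~\ref{app:M2-computations}). My first attempt is to search among $g_{R}^{\pm1}u_{N}(t)g_{R}^{\mp1}$ and among products of two such root transvections with $N\in\{E_{12},E_{21}\}$ and $R$ from a small list. I will record which pairs $(r,s)$ appear, stopping once the pairs obtained connect all four indices, and verify each identity symbolically in $t$.
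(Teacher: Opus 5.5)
Your framework is sound. The check $u_{E_{12}}(t)=x_{21}(t)$ is correct, and closing a strongly connected set of directed pairs under $[x_{rs}(a),x_{sq}(b)]_{\mathrm{grp}}=x_{rq}(ab)$ does yield every $x_{rs}(t)$. But the proposal stops exactly where the content of the proposition begins. Apart from $x_{21}$, you exhibit no further element of $\JMS(M_2(\K)^+)$ that is, or combines to, an elementary transvection in $\cE$, and a plan to search is not a proof. The paper supplies the missing edges explicitly:
\begin{itemize}
\item $g_{E_{12}}u_{E_{21}}(t)g_{E_{12}}^{-1}=x_{12}(t)$;
\item $g_{2E_{21}}u_{E_{12}}(t/2)g_{2E_{21}}^{-1}=x_{32}(t)$;
\item $r_\pm(t)=g_{R_\pm}u_{E_{21}}(t)g_{R_\pm}^{-1}=\id+t(\frac12\varepsilon_{12}-\frac12\varepsilon_{13}\pm\varepsilon_{14})$, combined as $r_+(t/2)r_-(-t/2)=x_{14}(t)$ and $r_+(-t)r_-(-t)x_{12}(t)=x_{13}(t)$;
\item $g_Ru_S(t)g_R^{-1}=x_{12}(-t)x_{32}(t)x_{42}(t)$, which isolates $x_{42}$.
\end{itemize}
Group commutators then give the remaining pairs.

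Your stated goal of landing on $x_{r4}$ and $x_{4s}$ directly as conjugates cannot succeed, so the cancellation step you list only as a fallback is indispensable.

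\textbf{Reduction to two matrices.} Since $\opL_A$ is self-adjoint for $\inner{\cdot}{\cdot}$, one has $g_Ru_N(t)g_R^{-1}=\id+t\,\inner{g_R^{-1}(N)}{\cdot}\,g_R(N)$. This is a multiple of $\varepsilon_{rs}$ exactly when $g_R(N)\in\K B_r$ and $g_R^{-1}(N)\in\K B_s^\vee$. Here $(B_k^\vee)$ is the trace-dual basis: $B_1^\vee=E_{12}$, $B_2^\vee=E_{21}-\frac12(I_2-E_{12})$, $B_3^\vee=\frac12(I_2-E_{12})$, $B_4^\vee=\frac12B_4$.

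\textbf{Explicit formulas in $M_2(\K)$.} Traceless $R,N$ satisfy $R\circ N=\frac12\Tr(RN)I_2$, and $RNR=\Tr(RN)R$ for rank-one $R$. Hence $g_R(N)=N+\frac12\Tr(RN)I_2$ and $g_R^{-1}(N)=N-\frac12\Tr(RN)(I_2-R)$.

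\textbf{The obstruction at index $4$.} These two matrices have traces $\Tr(RN)$ and $-\Tr(RN)$. They can be proportional to the traceless $B_4$ (or to $B_4^\vee$) only if $\Tr(RN)=0$, and then they equal the nilpotent $N$, whereas $B_4^2=I_2$. So no single conjugate $g_R^{\pm1}u_N(t)g_R^{\mp1}$ with $t\neq0$ equals some $x_{r4}(c)$ or $x_{4s}(c)$. Index $4$ is therefore isolated in the graph your Step~2 needs. It can be reached only through products in which unwanted components cancel: conjugates sharing a direction, or sharing a fixed hyperplane, as in the paper's $r_+r_-$ and $v(t)$.

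With the formulas above the remaining search becomes a short hand computation. For instance, $g_{E_{12}}(E_{21})=B_1$ and $g_{E_{12}}^{-1}(E_{21})=B_2^\vee$ give $x_{12}$. It still has to be carried out for the proposition to be proved.
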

	
	\begin{remark}\label{rem:epsilon-notation}
		Let $W$ be a finite-dimensional vector space with ordered basis
		$\cE=(b_1,\dots,b_d)$. For basis vectors $u,v\in\cE$, let
		\[
		\varepsilon_{u,v}\in\End_{\K}(W)
		\]
		denote the rank-one operator sending $v$ to $u$ and every other basis vector of
		$\cE$ to $0$. In particular, for $1\le i,j\le d$ we write
		\[
		\varepsilon_{ij}:=\varepsilon_{b_i,b_j}.
		\]
		When $W=M_2(\K)$ with the ordered basis $\cE=(B_1,B_2,B_3,B_4)$ from
		Proposition~\ref{prop:M2}, the symbols $\varepsilon_{rs}$ refer to these matrix
		units and are distinguished from the standard matrix units
		$E_{ij}\in M_2(\K)$.
	\end{remark}
	
	\begin{proof}[Proof of Proposition~\ref{prop:M2}]
		Using the notation of Remark~\ref{rem:epsilon-notation}, for $1\le r\ne s\le 4$ we have
		\[
		x_{rs}(t)=\id_{M_2(\K)}+t\varepsilon_{rs}.
		\]
		The basis $\cE$ is chosen so that $\opU_{E_{12}}$ acts as a single matrix unit and so that a family of conjugates $g_Pu_Q(t)g_P^{-1}$, with $P,Q \in M_2(\K)$ rank-one square-zero matrices, considered below has sparse matrices in this basis. The exact formulas used below are recorded in Lemma~\ref{lem:M2-formulas} of Appendix~\ref{app:M2-computations}. The change-of-basis matrix from the standard basis $(E_{11},E_{12},E_{21},E_{22})$ to $\cE$ has determinant $2$, so $\cE$ is indeed a basis of $M_2(\K)$ because $\operatorname{char}\K\neq 2$.
		
		The matrices
		\[
		R_+:=\begin{bmatrix}1&1\\ -1&-1\end{bmatrix},
		\qquad
		R_-:=\begin{bmatrix}-1&1\\ -1&1\end{bmatrix},
		\qquad
		R:=\begin{bmatrix}-2&-1\\ 4&2\end{bmatrix},
		\qquad
		S:=\begin{bmatrix}-1&-1\\ 1&1\end{bmatrix}
		\]
		all have rank $1$ and square $0$, and so does $2E_{21}$. Hence every operator of the form $u_N(t)$ and $g_Pu_Q(t)g_P^{-1}$ appearing below lies in $\JMS(M_2(\K)^+)$ by Proposition~\ref{prop:all-root-transvections} and Lemma~\ref{lem:gR-inverse}. For the reader's convenience, we indicate the order in which the twelve
		elementary transvections are obtained:
		\[
		x_{12},x_{21};\qquad
		x_{13},x_{14};\qquad
		x_{23},x_{24};\qquad
		x_{32},x_{31};\qquad
		x_{42},x_{41},x_{43},x_{34}.
		\]
		We now construct these families in turn.
		
		The formulas from Lemma~\ref{lem:M2-formulas} give
		\[
		u_{E_{12}}(t)=\id_{M_2(\K)}+t\varepsilon_{21}=x_{21}(t),
		\qquad
		g_{E_{12}}u_{E_{21}}(t)g_{E_{12}}^{-1}=\id_{M_2(\K)}+t\varepsilon_{12}=x_{12}(t).
		\]
		Hence $x_{12}(t),x_{21}(t)\in\JMS(M_2(\K)^+)$. Replacing $t$ by $-t$ gives their inverses, since $x_{rs}(t)^{-1}=x_{rs}(-t)$ for every elementary transvection.
		
		Next set
		\[
		r_{\pm}(t):=g_{R_{\pm}}u_{E_{21}}(t)g_{R_{\pm}}^{-1}.
		\]
		Lemma~\ref{lem:M2-formulas} gives
		\[
		r_+(t)=\id_{M_2(\K)}+t\left(\frac12\varepsilon_{12}-\frac12\varepsilon_{13}+\varepsilon_{14}\right),
		\qquad
		r_-(t)=\id_{M_2(\K)}+t\left(\frac12\varepsilon_{12}-\frac12\varepsilon_{13}-\varepsilon_{14}\right).
		\]
		Since the matrix units $\varepsilon_{12},\varepsilon_{13},\varepsilon_{14}$ pairwise multiply to zero,
		\[
		r_+\!\left(\frac t2\right)r_-\!\left(-\frac t2\right)=\id_{M_2(\K)}+t\varepsilon_{14}=x_{14}(t),
		\]
		and
		\[
		r_+(-t)r_-(-t)x_{12}(t)=\id_{M_2(\K)}+t\varepsilon_{13}=x_{13}(t).
		\]
		Thus $x_{13}(t),x_{14}(t)\in\JMS(M_2(\K)^+)$. For all distinct indices
		$i,j,k$ and all $\alpha,\beta\in\K$, the standard Steinberg relation
		\[
		[x_{ij}(\alpha),x_{jk}(\beta)]_{\mathrm{grp}}=x_{ik}(\alpha\beta)
		\]
		holds; see, for example, \cite[Chapter~I]{HahnOMeara1989}. Since both a transvection and its inverse belong to the semigroup, each displayed commutator below is a finite product of already known semigroup elements and therefore belongs to the semigroup:
		\[
		x_{23}(t)=[x_{21}(1),x_{13}(t)]_{\mathrm{grp}},
		\qquad
		x_{24}(t)=[x_{21}(1),x_{14}(t)]_{\mathrm{grp}}.
		\]
		Hence $x_{23}(t),x_{24}(t)\in\JMS(M_2(\K)^+)$. \smallskip
		
		Lemma~\ref{lem:M2-formulas} also gives
		\[
		g_{2E_{21}}u_{E_{12}}(s)g_{2E_{21}}^{-1}
		=
		\id_{M_2(\K)}+2s\,\varepsilon_{32},
		\]
		so
		\[
		x_{32}(t)=g_{2E_{21}}u_{E_{12}}\left(\frac{t}{2}\right)g_{2E_{21}}^{-1}\in\JMS(M_2(\K)^+).
		\]
		The displayed group commutator
		\[
		x_{31}(t)=[x_{32}(t),x_{21}(1)]_{\mathrm{grp}}
		\]
		is again a finite product of already known semigroup elements, so $x_{31}(t)\in\JMS(M_2(\K)^+)$. Finally let
		\[
		v(t):=g_Ru_S(t)g_R^{-1}.
		\]
		Lemma~\ref{lem:M2-formulas} gives
		\[
		v(t)=\id_{M_2(\K)}+t(-\varepsilon_{12}+\varepsilon_{32}+\varepsilon_{42}).
		\]
		Again the relevant matrix units pairwise multiply to zero, so
		\[
		v(t)=x_{12}(-t)x_{32}(t)x_{42}(t).
		\]
		Therefore
		\[
		x_{42}(t)=x_{12}(t)x_{32}(-t)v(t)\in\JMS(M_2(\K)^+).
		\]
		Finally, each of the displayed group commutators
		\[
		x_{41}(t)=[x_{42}(t),x_{21}(1)]_{\mathrm{grp}},
		\qquad
		x_{43}(t)=[x_{42}(t),x_{23}(1)]_{\mathrm{grp}},
		\qquad
		x_{34}(t)=[x_{32}(1),x_{24}(t)]_{\mathrm{grp}}
		\]
		is a finite product of already known semigroup elements. Hence these three operators also lie in the semigroup. At this point every ordered pair $(r,s)$ with $r\neq s$ has been realized.
	\end{proof}
	
	Once all elementary transvections are available in a given basis, the passage to the whole special linear group is standard. We record this in the next lemma for later use.
	
	\begin{lemma}\label{lem:gauss}
		Let $W$ be a finite-dimensional vector space over $\K$ with ordered basis $\cE=(b_1,\dots,b_d)$, where $d\ge 2$, and let $S\subseteq\End_{\K}(W)$ be a semigroup containing every elementary transvection
		\[
		x_{ij}(t)=\id_W+t\,\varepsilon_{ij}
		\qquad (1\le i\ne j\le d,\ t\in\K)
		\]
		relative to $\cE$. Then $\SL(W)\subseteq S$.
	\end{lemma}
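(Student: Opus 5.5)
The plan is to show that the subgroup of $\GL(W)$ generated by the elementary transvections $x_{ij}(t)$ is exactly $\SL(W)$, and that this subgroup lies inside $S$. The second point is immediate. Every elementary transvection is invertible with $x_{ij}(t)^{-1}=x_{ij}(-t)$, and this inverse is again an elementary transvection in $S$. Hence the set of finite products of elementary transvections is a group, namely the subgroup $E(W)$ they generate, and since $S$ is closed under composition we get $E(W)\subseteq S$. (If one insists on the empty product, $\id_W=x_{12}(1)x_{12}(-1)\in S$ anyway.) It therefore remains to prove $\SL(W)\subseteq E(W)$; the reverse inclusion is clear since each $x_{ij}(t)$ has determinant $1$.

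To prove $\SL(W)\subseteq E(W)$, identify $\End_{\K}(W)$ with $M_d(\K)$ via $\cE$. Multiplying $g$ on the left by $x_{ij}(t)$ adds $t$ times row $j$ to row $i$; multiplying on the right adds $t$ times column $i$ to column $j$. Take $g\in\SL_d(\K)$ and perform Gaussian elimination using only these operations.

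\textbf{First column.} Since $g$ is invertible, its first column has a nonzero entry. If $g_{11}=0$, pick $k$ with $g_{k1}\neq0$ and add row $k$ to row $1$, so that $g_{11}\neq 0$. If $g_{11}\neq1$, use the standard trick: add a suitable multiple of row $1$ to row $2$ so that $g_{21}\neq 0$, then add $(1-g_{11})/g_{21}$ times row $2$ to row $1$, which makes the $(1,1)$ entry equal to $1$. Now clear the rest of the first column with row operations and the rest of the first row with column operations. The result is $1\oplus g'$ with $\det g'=1$.

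\textbf{Induction.} Repeat the same procedure on $g'$, which only uses $x_{ij}$ with $i,j\ge2$. This reduces $g$ to $\diag(1,\dots,1,\delta)$, and the determinant forces $\delta=1$. Hence $g$ is a product of elementary transvections. When $d=2$ the same steps apply, since only indices $1,2$ were used.

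The only mildly delicate step is normalizing a pivot to $1$ without scaling rows, which is exactly what the row-$2$ trick handles. That trick needs $d\ge2$, which is given. Everything else is routine bookkeeping, so I expect no real obstacle.
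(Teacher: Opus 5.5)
Your proposal is correct and follows essentially the same argument as the paper: Gaussian elimination by elementary transvections, which is enough because $x_{ij}(t)^{-1}=x_{ij}(-t)$ lies in $S$. The only difference is where the diagonal is handled. You normalize each pivot to $1$ during elimination with the auxiliary-row trick, so that determinant one forces the final diagonal to be the identity. The paper instead reduces to an arbitrary diagonal $\Delta$ and factors it as a product of the matrices $H_i(a)$, writing each as four transvections via the explicit $2\times 2$ identity.
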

	
	\begin{proof}
		This is standard; compare \cite[Chapter~I]{HahnOMeara1989} and \cite{Humphries1986}. Since the lemma will be used repeatedly, we record a short self-contained proof.
		
		Identify $\End_{\K}(W)$ with $M_d(\K)$ via the basis $\cE$. The row and column operations below are carried out in the ambient group $\GL_d(\K)$. We use them only to derive a factorization
		\[
		PGQ=\Delta
		\]
		with $P$ and $Q$ products of elementary transvections and $\Delta$ diagonal; membership in $S$ will then follow because the relevant elementary transvections and their inverses lie in $S$.
		
		Let $G=(g_{rs})\in\GL_d(\K)$; after each row or column operation, we again denote the resulting matrix by 
		$G=(g_{rs})$. By composing on the left with at most one elementary transvection, we may suppose that the $(1,1)$-entry of $G$ is nonzero: if $g_{11}=0$, choose $r>1$ with $g_{r1}\ne 0$ and then choose $t\in\K$ such that $g_{11}+tg_{r1}\ne 0$; replacing $G$ by $x_{1r}(t)G$ achieves this.
		
		For each $r>1$, composing on the left with
		\[
		x_{r1}\!\left(-\frac{g_{r1}}{g_{11}}\right)
		\]
		clears the entry in position $(r,1)$; for each $s>1$, composing on the right with
		\[
		x_{1s}\!\left(-\frac{g_{1s}}{g_{11}}\right)
		\]
		clears the entry in position $(1,s)$. Iterating this argument reduces any invertible matrix to diagonal form by left and right multiplication with elementary transvections. Formally, one argues by induction on $d$: after clearing the first row and first column outside the $(1,1)$-entry, the resulting matrix is block diagonal with nonzero $(1,1)$-entry, and hence its lower-right $(d-1)\times(d-1)$ block is invertible. The same procedure therefore applies to that block.
		
		Now let $G\in\SL_d(\K)$. Then there exist products $P,Q$ of elementary transvections such that
		\[
		\Delta:=PGQ=\diag(a_1,\dots,a_d)
		\]
		with $a_1\cdots a_d=1$. For $1\le i<d$ and $a\in\K^\times$, let $H_i(a)$ be the diagonal matrix with entries $a$ and $a^{-1}$ in positions $i$ and $i+1$, and $1$ elsewhere. Then
		\[
		\Delta=H_1(a_1)H_2(a_1a_2)\cdots H_{d-1}(a_1\cdots a_{d-1}).
		\]
		It therefore suffices to write each $H_i(a)$ as a product of elementary transvections. This reduces to the standard $2\times 2$ identity
		\[
		\begin{bmatrix} a & 0 \\ 0 & a^{-1}\end{bmatrix}
		=
		\begin{bmatrix}1&a-1\\0&1\end{bmatrix}
		\begin{bmatrix}1&0\\1&1\end{bmatrix}
		\begin{bmatrix}1&a^{-1}-1\\0&1\end{bmatrix}
		\begin{bmatrix}1&0\\-a&1\end{bmatrix},
		\]
		embedded into the $(i,i+1)$ block. Hence $\Delta\in S$.
		
		Since $x_{ij}(t)^{-1}=x_{ij}(-t)\in S$, the products $P^{-1}$ and $Q^{-1}$ also belong to $S$. Therefore
		\[
		G=P^{-1}\Delta Q^{-1}\in S.
		\]
	\end{proof}
	
	We now embed the $M_2(\K)$ calculation into arbitrary $2\times2$ corners
	of $M_n(\K)$. Fix $1\le i<j\le n$, and put
	\[
	C_{ij}:=\spanop\{E_{ii},E_{ij},E_{ji},E_{jj}\}\subseteq \Av,
	\]
	and
	\[
	Z_{ij}:=\{X\in\Av:X_{ii}=X_{ij}=X_{ji}=X_{jj}=0\}.
	\]
	Thus
	\[
	\Av=C_{ij}\oplus Z_{ij}.
	\]
	Let
	\[
	\iota_{ij}\colon M_2(\K)\longrightarrow C_{ij}
	\]
	denote the natural corner embedding.
	
	The key observation is that the operators used in the proof of
	Proposition~\ref{prop:M2} can be placed in any chosen $2\times2$ corner.
	Indeed, if $N\in M_2(\K)$ is rank one and square-zero, then
	\[
	\opU_{\iota_{ij}(N)}(X)
	=
	\iota_{ij}(N)X\iota_{ij}(N)
	\]
	is obtained by applying $\opU_N$ to the $2\times2$ principal block of $X$
	with rows and columns $i,j$, and then embedding the result back into
	$C_{ij}$. Hence $u_{\iota_{ij}(N)}(t)$ restricts to $u_N(t)$ on
	$C_{ij}$ and fixes $Z_{ij}$ pointwise.
	
	Similarly, the operators $g_{\iota_{ij}(R)}$ and
	$g_{\iota_{ij}(R)}^{-1}$ preserve both $C_{ij}$ and $Z_{ij}$, and their
	restrictions to $C_{ij}$ are the corresponding operators $g_R$ and
	$g_R^{-1}$ on $M_2(\K)$. Therefore
	$
	g_{\iota_{ij}(R)}u_{\iota_{ij}(S)}(t)g_{\iota_{ij}(R)}^{-1}
	$
	restricts to
	$
	g_Ru_S(t)g_R^{-1}
	$
	on $C_{ij}$. Moreover, since $g_{\iota_{ij}(R)}^{-1}$ preserves
	$Z_{ij}$ and $u_{\iota_{ij}(S)}(t)$ fixes $Z_{ij}$ pointwise, this
	conjugate also fixes $Z_{ij}$ pointwise.
	
	For two distinct vectors $u,v$ in the standard matrix-unit basis
	\[
	\cE:=\{E_{ij}:1\le i,j\le n\}
	\]
	of $\Av$, and for $t\in\K$, let $\tau_{u,v}(t)$ denote the elementary
	transvection relative to $\cE$, sending $v$ to $v+tu$ and fixing all
	other basis vectors.
	
	\begin{proposition}\label{prop:local-SL4}
		For every $1\le i<j\le n$, the semigroup $\JMS(\cA)$ contains every
		operator $g\in\GL(\Av)$ such that
		\[
		g(C_{ij})=C_{ij},\qquad g|_{Z_{ij}}=\id_{Z_{ij}},
		\qquad \det(g|_{C_{ij}})=1.
		\]
		In particular, if $u$ and $v$ are distinct elements of
		\[
		\{E_{ii},E_{ij},E_{ji},E_{jj}\},
		\]
		then
		\[
		\tau_{u,v}(t)\in\JMS(\cA)
		\qquad(t\in\K).
		\]
	\end{proposition}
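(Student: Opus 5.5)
The plan is to transport Proposition~\ref{prop:M2} into the corner $C_{ij}$ using the discussion immediately preceding the statement. Set $B_k^{(ij)}:=\iota_{ij}(B_k)$ for $k=1,\dots,4$, where $B_1,\dots,B_4$ is the basis of Proposition~\ref{prop:M2}. Then $(B^{(ij)}_1,\dots,B^{(ij)}_4)$ is a basis of $C_{ij}$, and together with the matrix units spanning $Z_{ij}$ it gives a basis of $\Av$.

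For each elementary transvection $x_{rs}(t)$ relative to $\cE$, go back to the proof of Proposition~\ref{prop:M2}. There $x_{rs}(t)$ is written as a finite word in three kinds of operators: $u_N(\pm t')$, $g_Pu_Q(t')g_P^{-1}$, and products or group commutators of these. In that word, replace every $u_N$ by $u_{\iota_{ij}(N)}$ and every $g_P^{\pm1}$ by $g_{\iota_{ij}(P)}^{\pm1}$. Since $\iota_{ij}(N)$ is again rank one and square zero in $M_n(\K)$, each new factor lies in $\JMS(\cA)$ by Proposition~\ref{prop:all-root-transvections} and Lemma~\ref{lem:gR-inverse}.

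By the preceding discussion, each such factor preserves $C_{ij}$ and restricts there to the corresponding $M_2$ operator transported via $\iota_{ij}$. Each factor of type $u$ or $gug^{-1}$ also fixes $Z_{ij}$ pointwise. The commutators and products in the word are built only from such conjugates and their inverses, so they too fix $Z_{ij}$ pointwise. I will check that the $g$-operators never occur unpaired in the word. The resulting operator is therefore $\iota_{ij}x_{rs}(t)\iota_{ij}^{-1}\oplus\id_{Z_{ij}}$, which is the elementary transvection, relative to the combined basis, that acts inside $C_{ij}$.

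Now let $H$ be the set of all $g\in\GL(\Av)$ that preserve $C_{ij}$, are the identity on $Z_{ij}$, and have determinant one on $C_{ij}$. Restriction to $C_{ij}$ identifies $H$ with $\SL(C_{ij})\cong\SL_4(\K)$. Consider the semigroup $S:=\{h\in\End_\K(C_{ij}) : h\oplus\id_{Z_{ij}}\in\JMS(\cA)\}$. By the previous step, $S$ contains every elementary transvection relative to the basis $(B_k^{(ij)})$. Lemma~\ref{lem:gauss}, applied with $W=C_{ij}$ and $d=4$, then gives $\SL(C_{ij})\subseteq S$, and hence $H\subseteq\JMS(\cA)$.

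For the final claim, let $u\ne v$ lie in $\{E_{ii},E_{ij},E_{ji},E_{jj}\}$. The operator $\tau_{u,v}(t)$ preserves $C_{ij}$ and is the identity on $Z_{ij}$, because it moves only $v\in C_{ij}$, and it does so by a multiple of $u\in C_{ij}$. Its restriction to $C_{ij}$ is unipotent and so has determinant one. Thus $\tau_{u,v}(t)\in H\subseteq\JMS(\cA)$.

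The main obstacle is bookkeeping rather than mathematics. One must confirm that every operator in the word from Proposition~\ref{prop:M2} really fixes $Z_{ij}$ pointwise. This holds for $u_{\iota(N)}$, and it holds for $g u g^{-1}$ because $g^{-1}$ preserves $Z_{ij}$ and $u$ fixes it. It does \emph{not} hold for $g_{\iota(P)}$ alone, since $\opL_{I+\iota(P)}$ acts nontrivially on $Z_{ij}$: for instance $E_{ik}$ is sent to $E_{ik}+\tfrac12\iota(P)E_{ik}$. So I must make sure the $g$'s only ever appear in the conjugated form $g u g^{-1}$, and never standalone. Reading the proof of Proposition~\ref{prop:M2}, every use of $g$ is of this conjugated form, so this check should go through.
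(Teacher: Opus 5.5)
Your proposal is correct and follows essentially the same route as the paper. Both arguments transport the factorizations from Proposition~\ref{prop:M2} into $C_{ij}$ via $\iota_{ij}$, note that each factor $u_{\iota_{ij}(N)}(t)$ or $g_{\iota_{ij}(R)}u_{\iota_{ij}(S)}(t)g_{\iota_{ij}(R)}^{-1}$ lies in $\JMS(\cA)$ and fixes $Z_{ij}$ pointwise, and then apply Lemma~\ref{lem:gauss} on $C_{ij}$. Your restriction semigroup $S$ and your check that the $g$'s occur only in conjugated form spell out steps the paper states more briefly.
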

	
	\begin{proof}
		Let $\cE=(B_1,B_2,B_3,B_4)$ be the ordered basis of $M_2(\K)$ from
		Proposition~\ref{prop:M2}, and let
		\[
		\widetilde{\cE}_{ij}
		:=
		\bigl(\iota_{ij}(B_1),\iota_{ij}(B_2),\iota_{ij}(B_3),\iota_{ij}(B_4)\bigr)
		\]
		be the transported ordered basis of $C_{ij}$.
		
		The proof of Proposition~\ref{prop:M2} shows that every elementary
		transvection of $M_2(\K)$ relative to $\cE$ is a product of operators of the
		form
		\[
		u_N(t)
		\qquad\text{and}\qquad
		g_Ru_S(t)g_R^{-1},
		\]
		where $N,R,S\in M_2(\K)$ are rank-one square-zero matrices. Replacing each
		matrix $N,R,S$ in such a factorization by its corner embedding
		$\iota_{ij}(N),\iota_{ij}(R),\iota_{ij}(S)$, the preceding corner-embedding
		discussion shows that the resulting operator belongs to $\JMS(\cA)$, restricts
		to the corresponding elementary transvection on $C_{ij}$ relative to the basis
		$\widetilde{\cE}_{ij}$, and fixes $Z_{ij}$ pointwise.
		
		Hence $\JMS(\cA)$ contains all elementary transvections of $C_{ij}$ relative
		to $\widetilde{\cE}_{ij}$, extended by the identity on $Z_{ij}$. By
		Lemma~\ref{lem:gauss}, these transvections generate $\SL(C_{ij})$. Therefore
		$\JMS(\cA)$ contains every determinant-one operator supported on $C_{ij}$ and
		equal to the identity on $Z_{ij}$.
		
		In particular, this includes the elementary transvections between the four
		standard matrix units $E_{ii},E_{ij},E_{ji},E_{jj}$ spanning $C_{ij}$.
	\end{proof}

	Graph-theoretic arguments for groups generated by transvections are standard;
	see, for instance, Humphries' work on generation of special linear groups by
	transvections \cite{Humphries1986}. We shall need only the following elementary
	path-propagation form of this idea.
	
	At this point we know how to realize transvections between basis vectors lying in a common $2\times2$ corner. The next lemma shows how such local transvections can be propagated through a connected graph of basis vectors.
	
	\begin{lemma}\label{lem:path}
		Let $W$ be a finite-dimensional vector space with basis $\cE$, and let $S\subseteq\End_{\K}(W)$ be a semigroup. Suppose there is a connected graph $\Gamma$ on the vertex set $\cE$ such that for every edge $\{u,v\}$ of $\Gamma$ and every $t\in\K$, both transvections $\tau_{u,v}(t)$ and $\tau_{v,u}(t)$ belong to $S$. Then $S$ contains every transvection $\tau_{u,v}(t)$ with distinct $u,v\in\cE$.
	\end{lemma}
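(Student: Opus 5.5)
The plan is to use the Steinberg commutator relation $[\tau_{u,w}(\alpha),\tau_{w,v}(\beta)]_{\mathrm{grp}}=\tau_{u,v}(\alpha\beta)$ for pairwise distinct $u,w,v\in\cE$, together with the fact that inverses are available inside $S$. Since $\tau_{u,v}(t)^{-1}=\tau_{u,v}(-t)$, whenever $S$ contains $\tau_{u,w}(s)$ and $\tau_{w,v}(s)$ for all $s\in\K$, the group commutator
\[
\tau_{u,w}(t)\,\tau_{w,v}(1)\,\tau_{u,w}(-t)\,\tau_{w,v}(-1)
\]
is a finite product of elements of $S$. It therefore lies in $S$, and it equals $\tau_{u,v}(t)$. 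Only closure under composition is used here, so no group structure on $S$ is needed. I would first check this commutator identity directly: with $\varepsilon_{u,w}\varepsilon_{w,v}=\varepsilon_{u,v}$ and all other relevant products of these units being zero, the computation is routine.

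Next, call an ordered pair $(u,v)$ of distinct basis vectors \emph{good} if $\tau_{u,v}(t)\in S$ for all $t\in\K$. By hypothesis, both $(u,v)$ and $(v,u)$ are good for every edge $\{u,v\}$ of $\Gamma$. I will show that $(u,v)$ is good for every pair of distinct vertices by induction on the graph distance $d(u,v)$ in $\Gamma$, which is finite because $\Gamma$ is connected. The case $d=1$ is the hypothesis. For $d(u,v)=k\ge2$, choose a shortest path $u=w_0,w_1,\dots,w_k=v$. Then $d(u,w_{k-1})=k-1$, so $(u,w_{k-1})$ is good by induction. The pair $(w_{k-1},v)$ is an edge, so it is good too. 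The vertices $u,w_{k-1},v$ are pairwise distinct: $u\neq v$ by assumption, and $w_{k-1}$ differs from both endpoints because the path is shortest and $k\ge2$. Applying the commutator step with $w=w_{k-1}$ shows that $(u,v)$ is good. The argument is symmetric in direction, so it covers the ordered pair $(v,u)$ as well.

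I do not expect any real obstacle. The only points needing care are that the three vertices in the commutator are pairwise distinct, which the shortest-path choice guarantees, and that the inverse factors come from negating the parameter rather than from any group closure of $S$.
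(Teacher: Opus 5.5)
Your proposal is correct and is essentially the paper's argument: both propagate along a path in $\Gamma$ using the commutator identity $[\tau_{u,w}(\alpha),\tau_{w,v}(\beta)]_{\mathrm{grp}}=\tau_{u,v}(\alpha\beta)$. In both, the inverse factors are obtained by negating the parameter, so only closure under composition is needed, and the path guarantees that the three vertices are pairwise distinct. The only cosmetic difference is that the paper fixes one simple path and inducts along it, whereas you induct on graph distance using shortest paths.
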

	
	\begin{proof}
		Fix distinct $u,v\in\cE$, and choose a simple path
		\[
		u=v_0,v_1,\dots,v_m=v
		\]
		in $\Gamma$. We prove by induction on $r$ that
		\[
		\tau_{v_0,v_r}(t)\in S
		\qquad (1\le r\le m,\ t\in\K).
		\]
		For $r=1$ this is immediate from the hypothesis.
		
		Assume $r\ge2$ and that $\tau_{v_0,v_{r-1}}(t)\in S$ for all $t\in\K$. Set
		\[
		\phi:=\tau_{v_0,v_{r-1}}(1),\qquad \psi:=\tau_{v_{r-1},v_r}(t).
		\]
		Then $\phi,\psi\in S$, and also $\phi^{-1},\psi^{-1}\in S$ since $\tau_{a,b}(s)^{-1}=\tau_{a,b}(-s)$. Hence $[\phi,\psi]_{\mathrm{grp}}\in S$.
		
		Using the notation of Remark~\ref{rem:epsilon-notation}, write
		\[
		\phi=\id_W+\varepsilon_{v_0,v_{r-1}},
		\qquad
		\psi=\id_W+t\,\varepsilon_{v_{r-1},v_r}.
		\]
		Then
		\[
		\varepsilon_{v_0,v_{r-1}}^2=\varepsilon_{v_{r-1},v_r}^2=0,
		\qquad
		\varepsilon_{v_0,v_{r-1}}\varepsilon_{v_{r-1},v_r}
		=\varepsilon_{v_0,v_r}.
		\]
		Since the path is simple, $v_r\neq v_0$, and therefore
		\[
		\varepsilon_{v_{r-1},v_r}\varepsilon_{v_0,v_{r-1}}=0.
		\]
		Thus
		\[
		[\phi,\psi]_{\mathrm{grp}}
		=
		\id_W+t\,\varepsilon_{v_0,v_r}
		=
		\tau_{v_0,v_r}(t).
		\]
		This proves the induction step. Hence $\tau_{v_0,v_m}(t)\in S$, and since
		$v_0=u$ and $v_m=v$, it follows that $\tau_{u,v}(t)\in S$.
	\end{proof}
	
	We now apply the preceding lemma to the graph whose vertices are the standard matrix units and whose edges come from common $2\times2$ corners. Since this graph is connected, the local corner transvections propagate to all pairs of basis vectors.
	
	\begin{proposition}\label{prop:all-standard-transvections}
		Let $\cE=\{E_{ij}:1\le i,j\le n\}$ be the standard basis of $\Av=M_n(\K)$. Then for every distinct $u,v\in\cE$ and every $t\in\K$,
		\[
		\tau_{u,v}(t)\in\JMS(\cA).
		\]
	\end{proposition}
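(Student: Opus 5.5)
We will apply Lemma~\ref{lem:path} with $W=\Av$, the semigroup $S=\JMS(\cA)$, and the standard basis $\cE=\{E_{ij}:1\le i,j\le n\}$. The only work is to choose a suitable graph $\Gamma$ on $\cE$ and to check that it is connected.

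Let $\Gamma$ be the graph whose vertices are the matrix units $E_{ij}$. Two distinct matrix units $u,v$ are joined by an edge whenever there exist indices $1\le k<\ell\le n$ with
\[
u,v\in\{E_{kk},E_{k\ell},E_{\ell k},E_{\ell\ell}\}.
\]
In other words, $u$ and $v$ are adjacent when they lie in a common corner $C_{k\ell}$. For every such edge and every $t\in\K$, Proposition~\ref{prop:local-SL4} applied to the corner $C_{k\ell}$ gives both $\tau_{u,v}(t)\in\JMS(\cA)$ and $\tau_{v,u}(t)\in\JMS(\cA)$. So the edge hypothesis of Lemma~\ref{lem:path} holds.

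It remains to verify that $\Gamma$ is connected. Take any $E_{ij}$.
\begin{itemize}[label={}]
\item If $i\ne j$, then, writing $k=\min(i,j)$ and $\ell=\max(i,j)$, the vertex $E_{ij}$ lies in the corner $C_{k\ell}$. So it is adjacent to $E_{ii}$.
\item Every diagonal unit $E_{ii}$ with $i\ge 2$ is adjacent to $E_{11}$, since both lie in $C_{1i}$.
\end{itemize}
Hence every vertex is joined to $E_{11}$ by a path of length at most $2$, and $\Gamma$ is connected. Here we use $n\ge 2$, so that at least one corner exists; for $n=2$ the graph is simply complete on four vertices.

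Lemma~\ref{lem:path} then yields $\tau_{u,v}(t)\in\JMS(\cA)$ for all distinct $u,v\in\cE$ and all $t\in\K$, which is the claim.

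There is no serious obstacle here, since all the substantive work is already in Proposition~\ref{prop:local-SL4}. The only point to check is that Proposition~\ref{prop:local-SL4} supplies transvections in both directions along each edge. It does, because its statement covers every ordered pair of distinct elements of $\{E_{kk},E_{k\ell},E_{\ell k},E_{\ell\ell}\}$.
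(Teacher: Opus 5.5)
Your proposal is correct and follows essentially the same route as the paper. The paper uses the same graph on the matrix units, with edges given by common $2\times2$ corners, and obtains the edge transvections in both directions from Proposition~\ref{prop:local-SL4}. It proves connectedness through $E_{11}$ exactly as you do and then applies Lemma~\ref{lem:path}.
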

	
	\begin{proof}
		Let $\Gamma$ be the graph on $\cE$ in which two distinct vertices are joined exactly when they lie in a common set
		\[
		\{E_{ii},E_{ij},E_{ji},E_{jj}\}
		\qquad (1\le i<j\le n).
		\]
		By Proposition~\ref{prop:local-SL4}, both oriented transvections along every edge belong to $\JMS(\cA)$.
		
		The graph $\Gamma$ is connected. Indeed, if $i\ne j$, then the off-diagonal vertex
		$E_{ij}$ is adjacent to both $E_{ii}$ and $E_{jj}$, since these four basis
		vectors lie in the same set $\{E_{ii},E_{ij},E_{ji},E_{jj}\}$. Moreover, for each $i\ne 1$, the diagonal vertices $E_{11}$ and $E_{ii}$ are
		adjacent, since they lie in $\{E_{11},E_{1i},E_{i1},E_{ii}\}$. Thus every vertex of $\Gamma$ is connected to $E_{11}$, so $\Gamma$ is
		connected. Lemma~\ref{lem:path} now gives the claim.
	\end{proof}
	
	This gives all elementary transvections in the standard basis of $\Av$. Theorem~\ref{thm:SL-main} now follows immediately from Lemma~\ref{lem:gauss}.
	
	\begin{proof}[Proof of Theorem~\ref{thm:SL-main}]
		Order the standard basis $\cE$ as $(e_1,\dots,e_d)$ with $d=n^2$. Proposition~\ref{prop:all-standard-transvections} shows that $\JMS(\cA)$ contains every elementary transvection relative to this basis, so Lemma~\ref{lem:gauss} gives $\SL(\Av)\subseteq\JMS(\cA)$.
	\end{proof}
	
	\section{Determinants and abstract semigroup reductions}\label{sec:det-semigroup}
	
	In this section we complete the proof of the main theorem. By Theorem~\ref{thm:SL-main}, the semigroup $\JMS(\cA)$ already contains $\SL(\Av)$. To reach all of $\End_{\K}(\Av)$, it therefore remains to do two things: first, pass from $\SL(\Av)$ to $\GL(\Av)$ by proving that all determinant values occur on invertible elements of the semigroup; second, use one singular operator of rank $n^2-1$ together with the invertible part to recover every singular endomorphism.
	
	We begin by recording the determinant of a single Jordan multiplication operator. This formula will be used repeatedly in what follows.
	\begin{proposition}\label{prop:det-formula}
		Let $\K$ be a field of characteristic different from $2$, let $A\in M_n(\K)$, and let $\lambda_1,\dots,\lambda_n$ be the eigenvalues of $A$ in an algebraic closure of $\K$, counted with algebraic multiplicity. Then
		\begin{equation}\label{eq:detOpLA}
			\det(\opL_A)=2^{-n^2}\prod_{i,j=1}^n(\lambda_i+\lambda_j).
		\end{equation}
		Equivalently,
		\[
		\det(\opL_A)=2^{-n(n-1)}\det(A)\prod_{1\le i<j\le n}(\lambda_i+\lambda_j)^2.
		\]
		In particular, $\opL_A$ is invertible if and only if $\lambda_i+\lambda_j\ne 0$ for all $i,j$.
	\end{proposition}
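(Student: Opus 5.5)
The plan is to pass to an algebraic closure $\overline{\K}$, where the determinant of $\opL_A$ is unchanged, and then use $2\opL_A=\Lambda_A+\Pi_A$ with commuting summands.

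Since $\det(\opL_A)$ is computed from the matrix of $\opL_A$ in the standard basis $(E_{ij})$, whose entries lie in $\K$, it is unchanged by extending scalars to $\overline{\K}$. Over $\overline{\K}$, choose $S\in\GL_n(\overline{\K})$ with $T:=SAS^{-1}$ upper triangular, having diagonal $\lambda_1,\dots,\lambda_n$. By Lemma~\ref{lem:conjugation-covariance} (valid verbatim over $\overline{\K}$), $\opL_T=\Phi_S\opL_A\Phi_S^{-1}$, so $\det\opL_A=\det\opL_T$. This reduces the claim to triangular $T$.

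Next I will show that $\opL_T$ is triangular in a suitable ordering of the matrix units, with diagonal entries $(\lambda_i+\lambda_j)/2$. Write $2\opL_T=\Lambda_T+\Pi_T$. The operators $\Lambda_T$ and $\Pi_T$ commute, and:
\begin{itemize}
\item $\Lambda_T(E_{ij})=TE_{ij}=\sum_{k\le i}T_{ki}E_{kj}$;
\item $\Pi_T(E_{ij})=E_{ij}T=\sum_{\ell\ge j}T_{j\ell}E_{i\ell}$.
\end{itemize}
Define the weight of $E_{ij}$ as $w(E_{ij}):=(i-j)$. Then $\Lambda_T(E_{ij})$ equals $\lambda_iE_{ij}$ plus terms $E_{kj}$ with $k<i$, which have strictly smaller weight. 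Likewise $\Pi_T(E_{ij})$ equals $\lambda_jE_{ij}$ plus terms $E_{i\ell}$ with $\ell>j$, which also have strictly smaller weight. Order the basis by weight, breaking ties arbitrarily. Within a fixed weight, the off-diagonal contributions never stay in that weight class, so $2\opL_T$ is triangular with diagonal entries $\lambda_i+\lambda_j$ on $E_{ij}$.

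Hence $\det(2\opL_T)=\prod_{i,j}(\lambda_i+\lambda_j)$. Since $\Av$ has dimension $n^2$, $\det(2\opL_T)=2^{n^2}\det\opL_T$, which gives \eqref{eq:detOpLA}.

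For the equivalent form, split the product over ordered pairs $(i,j)$:
\begin{itemize}
\item the diagonal pairs $i=j$ contribute $\prod_i 2\lambda_i=2^n\det A$;
\item each unordered pair $i<j$ occurs twice, contributing $(\lambda_i+\lambda_j)^2$.
\end{itemize}
Since $2^{-n^2}\cdot 2^n=2^{-n(n-1)}$, the second formula follows. Invertibility of $\opL_A$ is equivalent to $\det\opL_A\neq0$, and since $\operatorname{char}\K\neq2$ the power of $2$ is a unit, so this holds exactly when every factor $\lambda_i+\lambda_j$ (including $2\lambda_i$ for $i=j$) is nonzero.

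The only step needing care is checking that the weight ordering really makes $\Lambda_T+\Pi_T$ triangular. An alternative is to note that $\Lambda_T$ and $\Pi_T$ are commuting operators with eigenvalues $\lambda_i$ and $\lambda_j$ (each with multiplicity $n$), and that they are simultaneously triangularizable on the pairs, which yields the same diagonal.
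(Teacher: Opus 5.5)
Your proposal is correct and essentially matches the paper's proof: extend scalars to $\overline{\K}$, conjugate $A$ to upper-triangular form via Lemma~\ref{lem:conjugation-covariance}, and observe that $2\opL_T$ is triangular in a suitable ordering of the matrix units, with diagonal entries $\lambda_i+\lambda_j$. Your ordering by the weight $i-j$ differs slightly from the paper's (increasing $j$, then decreasing $i$) but is equally valid, since every off-diagonal term strictly lowers the weight.
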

	
	\begin{proof}
		Passing to an algebraic closure does not change the determinant of the underlying $\K$-linear operator, so it is enough to prove the formula after extending scalars and assuming that $\K$ is algebraically closed. Choose $S\in\GL_n(\K)$ such that $B:=S^{-1}AS$ is upper-triangular with diagonal entries $\lambda_1,\dots,\lambda_n$. Conjugation by $S$ induces an automorphism
		\[
		\Phi_S(X):=SXS^{-1}
		\qquad (X\in M_n(\K)),
		\]
		and $\Phi_S\opL_B=\opL_A\Phi_S$. Thus $\opL_A$ and $\opL_B$ are similar.
		
		Order the matrix units $E_{ij}$ by increasing $j$ and, for fixed $j$, by decreasing $i$. Since $B$ is upper-triangular,
		\[
		BE_{ij}\in\spanop\{E_{kj}:k\le i\},
		\qquad
		E_{ij}B\in\spanop\{E_{i\ell}:\ell\ge j\}.
		\]
		With the chosen order, every basis vector occurring in $BE_{ij}-\lambda_iE_{ij}$ or $E_{ij}B-\lambda_jE_{ij}$ comes after $E_{ij}$. Hence
		\[
		2\opL_B(E_{ij})=(\lambda_i+\lambda_j)E_{ij}
		+\text{(a linear combination of basis vectors occurring after $E_{ij}$)}.
		\]
		Therefore the matrix of $2\opL_B$ in this basis is lower triangular, with
		diagonal entry $\lambda_i+\lambda_j$ in the row corresponding to $E_{ij}$. Hence
		\[
		\det(2\opL_B)=\prod_{i,j=1}^n(\lambda_i+\lambda_j),
		\]
		which proves the first formula. The second follows by grouping the factors with $i=j$ and $i<j$. The invertibility criterion follows immediately from \eqref{eq:detOpLA}.
	\end{proof}
	
	\begin{remark}
		Equivalently, $2\opL_A=\Lambda_A+\Pi_A$ may be viewed as a Kronecker-sum
		operator. Under the identification $M_n(\K)\cong \K^n\otimes(\K^n)^*$, left multiplication by $A$ corresponds to $A\otimes I_n$, while right
		multiplication by $A$ corresponds to $I_n\otimes A^*$, where $A^*$ is the dual
		map of $A$ (equivalently, $A^t$ in dual bases). Thus
		Proposition~\ref{prop:det-formula} is precisely the resulting product formula
		for the eigenvalues of $2\opL_A$.
	\end{remark}
	
	For a monoid $S$, write $S^\times$ for its group of invertible elements. If
	$S\subseteq\End_{\K}(W)$ with $W$ finite-dimensional, define
	\[
	\Delta(S):=\{\det(T):T\in S^\times\}\le\K^\times.
	\]
	
	Once determinant values in the unit group are understood, the passage from $\SL(W)$ to $\GL(W)$ is immediate. The next proposition isolates this simple reduction.
	
	\begin{proposition}\label{prop:det-reduction}
		Let $W$ be a finite-dimensional $\K$-vector space, and let $S\subseteq\End_{\K}(W)$ be a monoid. Assume that
		\begin{enumerate}[label={\rm(\roman*)}]
			\item $\SL(W)\subseteq S$;
			\item $\Delta(S)=\K^\times$.
		\end{enumerate}
		Then $\GL(W)\subseteq S$.
	\end{proposition}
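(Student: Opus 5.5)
The plan is to write an arbitrary invertible operator as an element of $S$ with the same determinant, multiplied by an element of $\SL(W)$. Let $G\in\GL(W)$ and put $c:=\det(G)\in\K^\times$. By hypothesis (ii), $c\in\Delta(S)$, so there is some $T\in S^\times$ with $\det(T)=c$. The definition of $S^\times$ matters here: $T$ is a unit of the monoid $S$, so $T$ has a two-sided inverse $T^{-1}$ that itself lies in $S$. (Since $W$ is finite-dimensional, $T$ is an invertible linear operator and $T^{-1}$ is its inverse in $\GL(W)$.)

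Next, consider $H:=T^{-1}G\in\GL(W)$. Then
\[
\det(H)=\det(T)^{-1}\det(G)=c^{-1}c=1,
\]
so $H\in\SL(W)$, and hence $H\in S$ by hypothesis (i). Because $S$ is closed under composition and $T\in S$, it follows that
\[
G=TH\in S.
\]
As $G\in\GL(W)$ was arbitrary, this gives $\GL(W)\subseteq S$.

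There is no real obstacle in this argument. The one point to state carefully is that $\Delta(S)$ is defined using units of $S$, not merely elements of $S$ that are invertible in $\End_{\K}(W)$. In fact the argument does not even use $T^{-1}\in S$: it only needs $T\in S$, and forms $T^{-1}G$ inside $\GL(W)$. So it suffices for $T$ to be an element of $S$ that is invertible as an operator, and the reduction is purely formal.
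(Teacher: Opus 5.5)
Your proposal is correct and is essentially the paper's proof: choose a unit of $S$ with determinant $\det(G)$, observe that its inverse times $G$ lies in $\SL(W)\subseteq S$, and multiply back. Your closing remark is also accurate: the paper's argument likewise uses only that this element lies in $S$ and has nonzero determinant, not that its inverse lies in $S$.
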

	
	\begin{proof}
		Let $G\in\GL(W)$. By \rm(ii), there exists $H\in S^\times$ with $\det(H)=\det(G)$. Then
		$\det(H^{-1}G)=1$, so $H^{-1}G\in\SL(W)\subseteq S$ by \rm(i). Since $H\in S$, it follows that $G=H(H^{-1}G)\in S$, as desired. 
	\end{proof}
	
	After the invertible case, the remaining issue is singular operators. The following lemma shows that, once all invertible operators are present, a single corank-one element is enough to generate the whole endomorphism semigroup.
	\begin{lemma}\label{lem:semigroup}
		Let $\K$ be a field, let $d\ge 1$, and let $S\subseteq M_d(\K)$ be a semigroup. Assume that
		\begin{enumerate}[label={\rm(\roman*)}]
			\item $\GL_d(\K)\subseteq S$;
			\item $S$ contains a singular element of rank $d-1$.
		\end{enumerate}
		Then $S=M_d(\K)$.
	\end{lemma}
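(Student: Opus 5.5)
The argument follows the rank of a matrix $M\in M_d(\K)$, using rank normal forms. Set $J_r:=\diag(1,\dots,1,0,\dots,0)$ with $r$ ones. By (i), $S$ contains every invertible matrix, which handles rank $d$. For any $M$ of rank $r$ there are $P,Q\in\GL_d(\K)$ with $M=PJ_rQ$. Since $S$ is a semigroup containing $\GL_d(\K)$, it therefore suffices to show $J_r\in S$ for each $0\le r\le d-1$.

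\emph{Rank $d-1$.} Let $T\in S$ be the singular element of rank $d-1$ provided by (ii). Its normal form gives $T=P_0J_{d-1}Q_0$ with $P_0,Q_0$ invertible. Then $J_{d-1}=P_0^{-1}TQ_0^{-1}\in S$, because $P_0^{-1},Q_0^{-1}\in\GL_d(\K)\subseteq S$. It follows that every matrix of rank $d-1$ lies in $S$.

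\emph{Lower ranks.} For $0\le r\le d-1$, write $J_r$ as a product of $d-r$ corank-one diagonal idempotents:
\[
J_r=\prod_{k=r+1}^{d}\bigl(I_d-E_{kk}\bigr).
\]
The factors are commuting diagonal matrices with a single zero entry each, so the product is the diagonal matrix with zeros exactly in positions $r+1,\dots,d$, namely $J_r$. Each factor $I_d-E_{kk}$ has rank $d-1$ and so lies in $S$ by the previous step. Hence $J_r\in S$; for $r=0$ this gives $0\in S$.

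The case $d=1$ is covered by the same argument. There $J_0=0$ is itself the rank-$0=d-1$ element, and $\GL_1(\K)=\K^\times$, so $S=M_1(\K)$.

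No step here is a real obstacle. The only point to check is that the diagonal-idempotent factorization produces $J_r$ exactly, and not merely something of the right rank, so that the reduction by left and right multiplication with invertibles applies.
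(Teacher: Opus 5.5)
Your proof is correct and follows essentially the same route as the paper: normalize the rank-$(d-1)$ element to $\diag(I_{d-1},0)$ using invertibles, obtain each corank-one diagonal idempotent (the paper by permutation conjugation, you by rank equivalence), multiply these to get every $\diag(I_r,0)$, and conclude by the rank normal form $M=P\,\diag(I_r,0)\,Q$.
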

	
	\begin{proof}
		If $d=1$, then $\GL_1(\K)=\K^\times\subseteq S$, and by \rm(ii) the semigroup
		also contains $0$, so $S=M_1(\K)$. Assume henceforth that $d\ge 2$.
		
		Let $A\in S$ be singular of rank $d-1$. There exist $G,H\in\GL_d(\K)$ such that
		\[
		GAH=\diag(I_{d-1},0)=:E.
		\]
		Hence $E\in S$.
		
		For each $k\in\{1,\dots,d\}$, let $P_k$ be the permutation matrix swapping the
		$k$th basis vector with the $d$th and fixing the others. Then
		$E_k:=P_kEP_k^{-1}\in S$, where $E_k$ is the diagonal idempotent with a single
		$0$ in position $k$. Since the $E_k$ commute, the semigroup contains all their
		products, and in particular every diagonal idempotent $\diag(I_r,0)$ with
		$0\le r\le d$.
		
		Finally, every matrix of rank $r$ is $\GL_d(\K)$-equivalent to
		$\diag(I_r,0)$, that is, every matrix of rank $r$ is of the form
		\[
		U\,\diag(I_r,0)\,W
		\]
		for suitable $U,W\in\GL_d(\K)$. Thus every matrix belongs to $S$.
	\end{proof}
	
	It therefore remains to exhibit one singular element of rank $n^2-1$ inside $\JMS(\cA)$.
	
	\begin{proposition}\label{prop:rank-d-1}
		Let $\K$ be a field of characteristic different from $2$. Then $\JMS(M_n(\K)^+)$ contains a singular operator of rank $n^2-1$.
	\end{proposition}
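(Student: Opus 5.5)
Since Theorem~\ref{thm:SL-main} is already available, any single operator of $\JMS(\cA)$ of rank exactly $n^2-1$ suffices. The cleanest candidate is one multiplication operator $\opL_A$ whose kernel is one-dimensional. By Proposition~\ref{prop:det-formula}, over an algebraic closure $2\opL_A$ is similar to a triangular matrix with diagonal entries $\lambda_i+\lambda_j$. This suggests taking $A$ diagonal, so that $\opL_A$ is itself diagonal in the basis $\{E_{ij}\}$:
\[
\opL_A(E_{ij})=\tfrac12(a_i+a_j)E_{ij}.
\]
Then $\rk(\opL_A)$ equals the number of ordered pairs $(i,j)$ with $a_i+a_j\neq 0$. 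The goal is therefore diagonal entries $a_1,\dots,a_n$ for which exactly one ordered pair has $a_i+a_j=0$.

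A pair with $i\neq j$ cannot be used, since $a_i+a_j=0$ also forces $a_j+a_i=0$, killing two pairs. So the vanishing pair must be diagonal, i.e.\ $a_1=0$. All other sums must then be nonzero, which requires:
\begin{itemize}
\item[] $a_i\neq0$ for $i\ge2$ (from the pairs $(1,i)$);
\item[] $a_i+a_j\neq 0$ for all $i,j\ge2$, including $2a_i\neq0$, which holds because $\operatorname{char}\K\neq2$.
\end{itemize}
The simplest choice is $A=E_{22}+\dots+E_{nn}=I_n-E_{11}$, so $a_1=0$ and $a_i=1$ for $i\ge 2$. The sums are then $0$ for $(1,1)$, $1$ for mixed pairs, and $2$ for pairs with $i,j\ge2$. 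The only one that vanishes is $(1,1)$, and the others are nonzero because $2\neq0$ in $\K$.

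The verification is direct. We have $\opL_A(E_{11})=0$, $\opL_A(E_{1j})=\frac12E_{1j}$, $\opL_A(E_{i1})=\frac12E_{i1}$ for $i,j\ge2$, and $\opL_A(E_{ij})=E_{ij}$ for $i,j\ge2$. Hence $\opL_A$ is diagonal in the standard basis with exactly one zero diagonal entry. It is therefore singular of rank $n^2-1$, and it lies in $\JMS(\cA)$ as a generator.

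There is essentially no obstacle; the only place the hypothesis enters is $\operatorname{char}\K\neq2$, which makes $\tfrac12$ meaningful and keeps $2\neq 0$. I would conclude by noting that, combined with Theorem~\ref{thm:SL-main}, Proposition~\ref{prop:det-reduction} and Lemma~\ref{lem:semigroup}, this supplies the singular ingredient. Determinant surjectivity is the separate, harder input.
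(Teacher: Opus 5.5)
Your proposal is correct and is essentially the paper's own proof. The paper also takes $A=\diag(0,1,\dots,1)$ and notes that $\opL_A$ is diagonal in the basis $\{E_{ij}\}$ with entries $(\alpha_i+\alpha_j)/2$, which by $\operatorname{char}\K\neq 2$ vanish only at $(1,1)$; hence $\ker(\opL_A)=\K E_{11}$ and $\opL_A$ has rank $n^2-1$.
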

	
	\begin{proof}
		Take $A:=\diag(0,1,\dots,1)\in M_n(\K)$. On the standard basis $E_{ij}$ of $\Av$,
		\[
		\opL_A(E_{ij})=\frac{\alpha_i+\alpha_j}{2}E_{ij},
		\]
		where $\alpha_1=0$ and $\alpha_i=1$ for $i\ge 2$. Since $\operatorname{char}\K\ne 2$, the scalar $(\alpha_i+\alpha_j)/2$ vanishes only when $(i,j)=(1,1)$. Hence $\ker(\opL_A)=\K E_{11}$ and $\rk(\opL_A)=n^2-1$.
	\end{proof}
	
	\subsection{Infinite fields}\label{sec:infinite}
	We now prove determinant surjectivity over infinite fields. The argument uses a simple family of diagonal Jordan multipliers. For all but finitely many parameters, these operators are invertible and their inverses still lie in the semigroup; this gives enough flexibility to realize every element of $\K^\times$ as a determinant.
	
	In this subsection $\K$ is an infinite field of characteristic different from $2$.
	The next scalar identity is the only special calculation needed in the infinite-field case.
	\begin{lemma}\label{lem:scalar-4-factor}
		Let $\sigma\in\K$ satisfy $\sigma\notin\{0,2,-2,-4\}$, and define
		\[
		v_1:=-\frac{8}{\sigma(\sigma+2)},
		\qquad
		v_2:=\frac{\sigma+2}{\sigma-2},
		\qquad
		v_3:=\frac{(\sigma-2)(\sigma+4)}{8},
		\qquad
		v_4:=-\frac{\sigma}{\sigma+4}.
		\]
		Then
		\[
		v_1v_2v_3v_4=1,
		\qquad
		(1+v_1)(1+v_2)(1+v_3)(1+v_4)=\sigma.
		\]
	\end{lemma}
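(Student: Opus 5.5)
This is a direct rational-function identity, so the plan is simply to verify both equalities by explicit algebra. First I note that the hypothesis $\sigma\notin\{0,2,-2,-4\}$ makes every denominator $\sigma(\sigma+2)$, $\sigma-2$, $8$, $\sigma+4$ nonzero. The denominator $8$ is nonzero because $\operatorname{char}\K\neq2$. Hence $v_1,\dots,v_4$ are well-defined elements of $\K$.

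For the product, I multiply numerators and denominators directly:
\[
v_1v_2v_3v_4=\frac{(-8)(\sigma+2)(\sigma-2)(\sigma+4)(-\sigma)}{\sigma(\sigma+2)(\sigma-2)\cdot 8\cdot(\sigma+4)}.
\]
The two minus signs cancel, and every factor in the numerator also appears in the denominator, so the quotient is $1$.

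For the second identity, I compute each $1+v_k$ separately and write it in factored form:
\[
1+v_1=\frac{\sigma^2+2\sigma-8}{\sigma(\sigma+2)}=\frac{(\sigma+4)(\sigma-2)}{\sigma(\sigma+2)},\qquad
1+v_2=\frac{2\sigma}{\sigma-2},
\]
\[
1+v_3=\frac{8+\sigma^2+2\sigma-8}{8}=\frac{\sigma(\sigma+2)}{8},\qquad
1+v_4=\frac{4}{\sigma+4}.
\]
Multiplying these four expressions gives
\[
\frac{(\sigma+4)(\sigma-2)\cdot 2\sigma\cdot\sigma(\sigma+2)\cdot 4}{\sigma(\sigma+2)(\sigma-2)\cdot 8\cdot(\sigma+4)}=\frac{8\sigma^2}{8\sigma}=\sigma,
\]
which is the claimed value.

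The only step needing real care is the factorization of $1+v_1$ and $1+v_3$. Both reduce to $\sigma^2+2\sigma-8=(\sigma+4)(\sigma-2)$ and its cancellation, and once these factored forms are in place the telescoping is immediate. Nothing else beyond the characteristic assumption, which guarantees $8\neq0$, is used.
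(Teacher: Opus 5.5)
Your proposal is correct and takes the same approach as the paper, which simply states that both identities follow by direct calculation. You carry that calculation out explicitly, including the factored forms of each $1+v_k$ and the observation that the hypotheses together with $\operatorname{char}\K\neq 2$ make every denominator nonzero.
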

	
	\begin{proof}
		Both identities are verified by direct calculation.
	\end{proof}
	
	For $u\in\K$, let
	\[
	A(u):=\diag(u,1,\dots,1)\in M_n(\K),
	\qquad
	m(u):=\opL_{A(u)}\in\JMS(\cA).
	\]
	
	The point of Lemma~\ref{lem:scalar-4-factor} is to choose
	$v_1,\dots,v_4$ so that the composition of the four factors
	\[
	\opL_{A(v_k)},\qquad k=1,\dots,4,
	\]
	leaves the $E_{11}$-eigenvalue unchanged, since
	$v_1v_2v_3v_4=1$, while it adjusts the mixed eigenvalues on
	$E_{1j}$ and $E_{j1}$ by
	\[
	\prod_{k=1}^4\frac{1+v_k}{2}
	=
	\frac{4u}{(u+1)^2}.
	\]
	In this way $
	\opL_{A(u^{-1})}\opL_{A(v_1)}\opL_{A(v_2)}
	\opL_{A(v_3)}\opL_{A(v_4)}
	$
	acts on each standard basis vector in the same way as $m(u)^{-1}$, and
	therefore equals $m(u)^{-1}$.
	
	We now make this construction precise and record the finite exceptional set.
	
	\begin{proposition}\label{prop:cofinite-diagonal-inverses}
		There exists a finite subset $B\subseteq\K$ such that, for every $u\in\K\setminus B$, the operator $m(u)$ is invertible and
		\[
		m(u)^{-1}\in\JMS(\cA).
		\]
		One may take
		\[
		B:=\left\{
		u\in\K:
		u=0,\ \ u=-1,\ \ \text{or}\ \ \frac{64u}{(u+1)^2}\in\{2,-2,-4\}
		\right\}.
		\]
	\end{proposition}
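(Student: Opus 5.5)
The plan is to verify the displayed factorization directly on the standard basis, since every operator involved is diagonal there. For $u\in\K$, the computation in the proof of Proposition~\ref{prop:rank-d-1}, with $\alpha_1=u$, gives
\[
m(u)(E_{11})=uE_{11},\qquad m(u)(E_{1j})=\tfrac{u+1}{2}E_{1j},\quad m(u)(E_{j1})=\tfrac{u+1}{2}E_{j1}\ (j\ge2),\qquad m(u)(E_{ij})=E_{ij}\ (i,j\ge2).
\]
Hence $m(u)$ is invertible exactly when $u\neq0$ and $u\neq-1$, because $\operatorname{char}\K\neq2$. This is consistent with Proposition~\ref{prop:det-formula}. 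The inverse $m(u)^{-1}$ is the diagonal operator with eigenvalues $u^{-1}$, $\tfrac{2}{u+1}$ and $1$ on the three types of basis vectors.

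Next, fix $u\notin B$ and set $\sigma:=\tfrac{64u}{(u+1)^2}$, which is well defined since $u\neq-1$. We have $\sigma\neq0$ because $u\neq0$, and $\sigma\notin\{2,-2,-4\}$ by the definition of $B$. So Lemma~\ref{lem:scalar-4-factor} applies and produces $v_1,\dots,v_4\in\K$ with $v_1v_2v_3v_4=1$ and $\prod_k(1+v_k)=\sigma$.

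Consider the operator
\[
T:=\opL_{A(u^{-1})}\opL_{A(v_1)}\opL_{A(v_2)}\opL_{A(v_3)}\opL_{A(v_4)}\in\JMS(\cA).
\]
It is a product of operators that are all diagonal in the standard basis, so it is diagonal there with multiplicative eigenvalues:
\begin{itemize}
\item on $E_{11}$ it acts by $u^{-1}\cdot v_1v_2v_3v_4=u^{-1}$;
\item on $E_{ij}$ with $i,j\ge2$ it acts by $1$;
\item on $E_{1j}$ and $E_{j1}$ it acts by
\[
\frac{u^{-1}+1}{2}\prod_{k=1}^4\frac{1+v_k}{2}=\frac{u+1}{2u}\cdot\frac{\sigma}{16}=\frac{u+1}{2u}\cdot\frac{4u}{(u+1)^2}=\frac{2}{u+1}.
\]
\end{itemize}
Thus $T$ and $m(u)^{-1}$ agree on every basis vector, so $m(u)^{-1}=T\in\JMS(\cA)$.

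It remains to check that $B$ is finite. The condition $64u=c(u+1)^2$ for $c\in\{2,-2,-4\}$ is a nonzero quadratic equation in $u$, so it has at most two roots for each $c$. Together with $u\in\{0,-1\}$, this gives $|B|\le 8$.

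No step here is genuinely hard. The main thing to guard against is an undefined quantity: $u^{-1}$ must exist, which uses $u\neq0$, and the hypotheses of Lemma~\ref{lem:scalar-4-factor} must hold, which is exactly what the exceptional set $B$ is built to guarantee.
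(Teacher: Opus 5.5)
Your proposal is correct and follows the paper's proof essentially verbatim. It uses the same diagonal eigenvalue computation on the standard basis, the same five-factor product $\opL_{A(u^{-1})}\opL_{A(v_1)}\cdots\opL_{A(v_4)}$ obtained from Lemma~\ref{lem:scalar-4-factor} with $\sigma=64u/(u+1)^2$, and the same quadratic-equation count showing that $B$ is finite. You also state explicitly that $\sigma\neq0$, which the lemma's hypothesis $\sigma\notin\{0,2,-2,-4\}$ requires and which the paper leaves implicit.
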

	
	\begin{proof}
		On the standard basis $E_{ij}$ of $\Av$,
		\begin{equation}\label{eq:mu-eigen}
			m(u)(E_{11})=uE_{11},
			\qquad
			m(u)(E_{1j})=\frac{u+1}{2}E_{1j},
			\qquad
			m(u)(E_{j1})=\frac{u+1}{2}E_{j1}\quad (j>1),
		\end{equation}
		and $m(u)$ fixes each $E_{ij}$ with $i,j>1$. By Proposition~\ref{prop:det-formula}, $m(u)$ is invertible whenever $u\ne 0,-1$. Set
		\[
		\sigma(u):=\frac{64u}{(u+1)^2}.
		\]
		Assume now that $u\ne 0,-1$ and $\sigma(u)\notin\{2,-2,-4\}$. Choose $v_1,\dots,v_4 \in \K$ as in Lemma~\ref{lem:scalar-4-factor} for the scalar $\sigma(u)$, and put
		\[
		D_0:=\diag(u^{-1},1,\dots,1),
		\qquad
		D_j:=\diag(v_j,1,\dots,1)\quad (1\le j\le 4).
		\]
		Define
		\[
		T:=\opL_{D_0}\opL_{D_1}\opL_{D_2}\opL_{D_3}\opL_{D_4}\in\JMS(\cA).
		\]
		By \eqref{eq:mu-eigen}, the operator $T$ acts on $E_{11}$ by the scalar
		\[
		u^{-1}v_1v_2v_3v_4=u^{-1}.
		\]
		For $j>1$, the operator $T$ acts on $E_{1j}$ and $E_{j1}$ by
		\[
		\frac{u^{-1}+1}{2}\prod_{k=1}^4\frac{1+v_k}{2}
		=
		\frac{u+1}{2u}\cdot\frac{\sigma(u)}{16}
		=
		\frac{u+1}{2u}\cdot\frac{4u}{(u+1)^2}
		=
		\frac{2}{u+1}.
		\]
		Finally, $T$ fixes every $E_{ij}$ with $i,j>1$. These are exactly the eigenvalues of $m(u)^{-1}$, so $T=m(u)^{-1}$.
		
		For each fixed $c\in\{2,-2,-4\}$, the condition
		\[
		\frac{64u}{(u+1)^2}=c
		\]
		is equivalent to a quadratic equation in $u$, hence has at most two solutions. Therefore $B$ is finite.
	\end{proof}
	
	This is the key input for determinant surjectivity over infinite fields.
	\begin{theorem}\label{thm:delta-infinite}
		Let $\K$ be an infinite field of characteristic different from $2$, and let $n\ge 2$. Then
		\[
		\Delta(\JMS(M_n(\K)^+))=\K^\times.
		\]
	\end{theorem}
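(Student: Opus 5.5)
We plan to realize every $c\in\K^\times$ as the determinant of a quotient $m(u)\,m(w)^{-1}$, where $u,w$ avoid the finite exceptional set $B$ of Proposition~\ref{prop:cofinite-diagonal-inverses}. For $u\notin B$, both $m(u)$ and $m(u)^{-1}$ lie in $\JMS(\cA)$, so $m(u)\in\JMS(\cA)^\times$. Since $\JMS(\cA)^\times$ is a group, $\Delta(\JMS(\cA))$ is a subgroup of $\K^\times$ and contains every value
\[
f(u):=\det m(u)=u\Bigl(\frac{u+1}{2}\Bigr)^{2(n-1)}\qquad(u\in\K\setminus B).
\]
This formula follows from \eqref{eq:mu-eigen}: $E_{11}$ has eigenvalue $u$, the $2(n-1)$ vectors $E_{1j},E_{j1}$ have eigenvalue $(u+1)/2$, and all other basis vectors are fixed. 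It can also be read off from Proposition~\ref{prop:det-formula}. It therefore suffices to find $u,w\notin B$ with $f(u)=c\,f(w)$.

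To find such a pair, we introduce an auxiliary parameter $\mu\in\K^\times$ and impose two conditions:
\[
\frac{u+1}{2}=\mu\,\frac{w+1}{2}
\qquad\text{and}\qquad
u=c\,\mu^{-2(n-1)}w .
\]
Under these conditions,
\[
f(u)=c\,\mu^{-2(n-1)}w\cdot\mu^{2(n-1)}\Bigl(\frac{w+1}{2}\Bigr)^{2(n-1)}=c\,f(w),
\]
as required.

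Substituting the second condition into the first gives $c\mu^{-2(n-1)}w+1=\mu w+\mu$, which is a linear equation in $w$. Whenever $\mu\neq1$ and $c\mu^{-2(n-1)}\neq\mu$, it has the unique solution
\[
w(\mu)=\frac{\mu-1}{c\,\mu^{-2(n-1)}-\mu},
\qquad
u(\mu)=c\,\mu^{-2(n-1)}w(\mu).
\]
Both $w(\mu)$ and $u(\mu)$ are nonconstant rational functions of $\mu$. For instance, $w$ has a zero at $\mu=1$ and is not identically zero. Hence for each of the finitely many points $b\in B$, the equations $w(\mu)=b$ and $u(\mu)=b$ have only finitely many solutions $\mu$. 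The conditions $\mu=1$ and $c\mu^{-2(n-1)}=\mu$ (equivalently $\mu^{2n-1}=c$) exclude only finitely many further values, and we also exclude $\mu=0$.

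Since $\K$ is infinite, we can choose $\mu$ outside all these finite sets. Then $u,w\notin B$, and
\[
m(u)\,m(w)^{-1}\in\JMS(\cA)^\times
\qquad\text{with}\qquad
\det\bigl(m(u)m(w)^{-1}\bigr)=c.
\]
This gives $\Delta(\JMS(\cA))=\K^\times$.

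The main point requiring care is the nonconstancy check, which guarantees that only finitely many $\mu$ are bad. One must verify that $u(\mu)$ is not identically equal to some $b\in B$. This holds because $u(\mu)$ vanishes at $\mu=1$ but is not the zero function, and $0\in B$ is handled by the same zero-at-$\mu=1$ argument together with the finiteness of zeros of a nonzero rational function. Everything else is a direct substitution into the determinant formula.
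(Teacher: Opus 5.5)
Your proposal is correct and is essentially the paper's proof: your $\mu$, $w(\mu)$, $u(\mu)$ are exactly the paper's $y$, $b$, $a$ (with $x=c\mu^{-2(n-1)}$), so you obtain the same element $m(u)m(w)^{-1}$ and the same finite exclusion argument. One small slip: when $c=1$, neither $w$ nor $u$ vanishes at $\mu=1$ (for instance $u=-1/(1+\mu+\cdots+\mu^{2n-2})$), so your justification of nonconstancy fails in that case. Nonconstancy still holds there, and $c=1$ is trivial via $\id_{\Av}$ anyway; clearing denominators as the paper does avoids the issue, since it gives the nonzero polynomials $e\mu^{2n-1}+c\mu-c(1+e)$ and $(1+e)\mu^{2n-1}-\mu^{2n-2}-ec$.
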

	
	\begin{remark}\label{rem:algclosed-simplification}
		If $\K$ is algebraically closed, the conclusion of Theorem~\ref{thm:delta-infinite} is immediate. Indeed, given $\gamma\in\K^\times$, choose $\mu\in\K^\times$ with $\mu^{n^2}=\gamma$. Then $\opL_{\mu I_n}=\mu\id_{\Av}$ belongs to $\JMS(\cA)$ and satisfies
		\[
		\det(\opL_{\mu I_n})=\det(\mu\id_{\Av})=\mu^{n^2}=\gamma.
		\]
		Thus $\gamma\in\Delta(\JMS(\cA))$. The content of Theorem~\ref{thm:delta-infinite} is that determinant surjectivity still holds over an arbitrary infinite field of characteristic different from $2$.
	\end{remark}
	
	\begin{proof}[Proof of Theorem~\ref{thm:delta-infinite}]
		Set $S:=\JMS(\cA)$ and $r:=2n-2$. Fix $\gamma\in\K^\times$. We shall produce $h_\gamma\in S^\times$ with $\det(h_\gamma)=\gamma$.
		
		Let $B\subseteq\K$ be the finite exceptional set from Proposition~\ref{prop:cofinite-diagonal-inverses}. For $y\in\K^\times$ put
		\[
		x:=\gamma y^{-r},
		\qquad
		a:=\frac{x(y-1)}{x-y},
		\qquad
		b:=\frac{y-1}{x-y},
		\]
		whenever $y\neq 1$ and $x\neq y$. Then
		\[
		\frac ab=x,
		\qquad
		\frac{a+1}{b+1}=y.
		\]
		We claim that $y$ may be chosen so that
		\[
		y\neq 1,\qquad x\neq 1,\qquad x\neq y,\qquad a,b\notin B.
		\]
		Indeed, the conditions $x=1$ and $x=y$ are equivalent to
		\[
		y^r=\gamma,\qquad y^{r+1}=\gamma,
		\]
		respectively, so together with $y=1$ they exclude only finitely many $y\in\K^\times$.
		
		Now fix $e\in B$. The condition $a=e$ is equivalent to
		\[
		x(y-1)=e(x-y),
		\]
		and substituting $x=\gamma y^{-r}$ yields
		\[
		e\,y^{r+1}+\gamma y-\gamma(1+e)=0.
		\]
		Likewise, $b=e$ is equivalent to
		\[
		(1+e)y^{r+1}-y^r-e\gamma=0.
		\]
		Thus, for each $e\in B$, the conditions $a=e$ and $b=e$ result in polynomial equations of degree at most $r+1$, each excluding only finitely many values of $y$. Since $B$ is finite and $\K$ is infinite, there exists $y\in\K^\times$ satisfying all of the above conditions.
		
		For such a choice of $y$, we have $a,b\notin B$, so Proposition~\ref{prop:cofinite-diagonal-inverses} gives $m(a),\,m(b)\in S^\times$. Set
		\[
		h_\gamma:=m(a)m(b)^{-1}\in S^\times.
		\]
		By \eqref{eq:mu-eigen}, the operator $m(u)$ has eigenvalue $u$ on $E_{11}$, eigenvalue $(u+1)/2$ with multiplicity $r=2n-2$, and eigenvalue $1$ on the remaining basis vectors; hence
		\[
		\det(m(u))=u\left(\frac{u+1}{2}\right)^r.
		\]
		Therefore
		\[
		\det(h_\gamma)
		=
		\frac{\det(m(a))}{\det(m(b))}
		=
		\frac ab\left(\frac{a+1}{b+1}\right)^r
		=
		xy^r
		=
		\gamma.
		\]
		Thus $\gamma\in\Delta(S)$. Since $\gamma\in\K^\times$ was arbitrary, it follows that $\Delta(\JMS(M_n(\K)^+))=\K^\times$.
	\end{proof}
	
	\begin{remark}\label{rem:large-finite-by-avoidance}
		Although Theorem~\ref{thm:delta-infinite} is stated for infinite fields, the same parameter-avoidance argument also works over finite fields once the field is large enough. More precisely, let $\F_q$ be a finite field of odd characteristic and put $r:=2n-2$. If $B\subseteq\F_q$ is the exceptional set from Proposition~\ref{prop:cofinite-diagonal-inverses}, then the proof goes through whenever
		\[
		q-1>2(r+1)(|B|+1).
		\]
		Since $|B|\le 8$, a simple sufficient condition is
		\[
		q-1>18(2n-1).
		\]
		Under this hypothesis one already obtains
		\[
		\Delta(\JMS(M_n(\F_q)^+))=\F_q^\times
		\]
		without using character sums.
	\end{remark}
	
	At this point the infinite-field case is immediate: determinant surjectivity gives the full invertible group, and Proposition~\ref{prop:rank-d-1} supplies a corank-one singular operator.
	\begin{corollary}\label{cor:main-infinite}
		Let $\K$ be an infinite field of characteristic different from $2$, and let $n\ge 2$. Then
		\[
		\GL(M_n(\K))\subseteq\JMS(M_n(\K)^+)
		\qquad\text{and}\qquad
		\JMS(M_n(\K)^+)=\End_{\K}(M_n(\K)).
		\]
	\end{corollary}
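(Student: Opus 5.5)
The plan is to chain together results already established, with no new computation. Write $S:=\JMS(M_n(\K)^+)$ and $W:=\Av=M_n(\K)$, with $\dim W=n^2$. Since $n\ge 2$ and $\operatorname{char}\K\ne 2$, Theorem~\ref{thm:SL-main} gives $\SL(W)\subseteq S$. Since $\K$ is infinite, Theorem~\ref{thm:delta-infinite} gives $\Delta(S)=\K^\times$. The set $S$ is a monoid because $\opL_{I_n}=\id_{W}$. So both hypotheses of Proposition~\ref{prop:det-reduction} hold, and we get $\GL(W)\subseteq S$. This is the first assertion.

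For the second assertion, fix the standard basis $(E_{ij})$ of $W$. This identifies $\End_{\K}(W)$ with $M_{d}(\K)$ for $d=n^2$, and $S$ becomes a subsemigroup of $M_d(\K)$ containing $\GL_d(\K)$. Proposition~\ref{prop:rank-d-1} supplies the operator $\opL_{\diag(0,1,\dots,1)}\in S$, which is singular of rank $d-1$. Lemma~\ref{lem:semigroup} then yields $S=M_d(\K)$, that is, $\JMS(M_n(\K)^+)=\End_{\K}(M_n(\K))$.

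There is no real obstacle at this stage, since all the substantive work sits in the cited results. The only point to check is that each hypothesis is met exactly as stated. Proposition~\ref{prop:det-reduction} requires $S$ to be a monoid, and Theorem~\ref{thm:delta-infinite} requires $\K$ infinite with $n\ge 2$. Both hold here.
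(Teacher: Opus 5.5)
Your proposal is correct and follows the paper's proof exactly. It feeds Theorem~\ref{thm:SL-main} and Theorem~\ref{thm:delta-infinite} into Proposition~\ref{prop:det-reduction} to get $\GL(\Av)\subseteq\JMS(\cA)$, then combines Proposition~\ref{prop:rank-d-1} with Lemma~\ref{lem:semigroup} to conclude $\JMS(\cA)=\End_{\K}(\Av)$, and your hypothesis checks (monoid structure, infinite field, $n\ge 2$) are accurate.
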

	
	\begin{proof}
		Set $S:=\JMS(\cA)$. By Theorem~\ref{thm:SL-main} and Theorem~\ref{thm:delta-infinite}, Proposition~\ref{prop:det-reduction} gives $\GL(\Av)\subseteq S$. Proposition~\ref{prop:rank-d-1} and Lemma~\ref{lem:semigroup} then imply $S=\End_{\K}(\Av)$.
	\end{proof}
	
	\subsection{Finite fields}\label{sec:finite}
	We next prove determinant surjectivity over finite fields. In this case the
	problem reduces to showing that the determinants of invertible Jordan multipliers generate $\K^\times$; for $q\ge 5$ this follows from a Jacobi-sum
	estimate, while $\F_3$ is handled separately.
	
	In this subsection, $\K=\F_q$ is a finite field of odd characteristic, and, as before,
	\[
	\cA:=M_n(\K)^+,
	\qquad
	\Av:=M_n(\K).
	\]
	
	We first record a simple fact specific to the finite setting: in a finite monoid of endomorphisms, every invertible element of the ambient endomorphism ring is already a unit of the monoid.
	\begin{lemma}\label{lem:finite-unit-group}
		Let $M\subseteq\End_{\K}(\Av)$ be a finite monoid. Then
		\[
		M^\times=M\cap\GL(\Av).
		\]
	\end{lemma}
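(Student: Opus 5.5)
The inclusion $M^\times\subseteq M\cap\GL(\Av)$ is immediate. A unit $T$ of $M$ has an inverse $T'\in M$ with $TT'=T'T=\id_{\Av}$, since $\id_{\Av}$ is the identity of the monoid. Hence $T$ is invertible as an endomorphism of $\Av$.

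For the reverse inclusion, I will use finiteness through the powers of a fixed element. Take $T\in M\cap\GL(\Av)$. All positive powers $T^k$ lie in $M$, and $M$ is finite, so there are indices $1\le i<j$ with $T^i=T^j$. Because $T$ is invertible in $\End_{\K}(\Av)$, I can cancel $T^i$ and obtain $T^{m}=\id_{\Av}$ with $m:=j-i\ge 1$.

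If $m=1$, then $T=\id_{\Av}$, which is a unit with inverse $\id_{\Av}\in M$. If $m\ge 2$, then $T^{m-1}\in M$ and $T\,T^{m-1}=T^{m-1}T=\id_{\Av}$, so $T^{m-1}$ is an inverse of $T$ inside $M$. In both cases $T\in M^\times$.

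No step here is a real obstacle; the only point needing care is that cancellation uses invertibility of $T$ in the ambient ring. Alternatively, one can note that $M\cap\GL(\Av)$ is a finite cancellative monoid, hence a group. In the application, $M=\JMS(\cA)$ is finite because $\End_{\F_q}(\Av)$ is finite.
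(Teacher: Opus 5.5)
Your proof is correct and follows the paper's argument: pigeonhole on the powers of $T$, cancellation in $\End_{\K}(\Av)$ to get $T^{m}=\id_{\Av}$, and then $T^{-1}=T^{m-1}\in M$. Your separate treatment of $m=1$ just replaces the paper's use of the convention $T^{0}=\id_{\Av}\in M$.
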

	
	\begin{proof}
		The inclusion $M^\times\subseteq M\cap\GL(\Av)$ is clear. Conversely, let $T\in M\cap\GL(\Av)$. Since $M$ is finite, there exist integers $r>s\ge 0$ such that $T^r=T^s$. As $T$ is invertible in $\End_{\K}(\Av)$, we may compose with $T^{-s}$ in $\End_{\K}(\Av)$ and obtain
		$T^{r-s}=\id_{\Av}$. Hence $T^{-1}=T^{r-s-1}\in M$, because $M$ is closed under multiplication and contains $\id_{\Av}$. Thus $T\in M^\times$.
	\end{proof}
	
	In particular, if $S$ is a finite submonoid of $\End_{\K}(V)$, then the
	set of nonzero determinant values $\Delta(S)$ is a subgroup of $\K^\times$.

	The field $\F_3$ is exceptional and is treated directly.
	\begin{lemma}\label{lem:q3-det-surj}
		Let $\K=\F_3$ and let $n\ge 2$. Then
		\[
		\Delta(\JMS(M_n(\K)^+))=\K^\times.
		\]
	\end{lemma}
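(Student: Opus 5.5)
The plan is to exhibit a single invertible multiplication operator $\opL_A$ with $\det(\opL_A)=-1$. Since $\F_3^\times=\{1,-1\}$ and $\JMS(\cA)$ is a finite monoid, Lemma~\ref{lem:finite-unit-group} shows that such an $\opL_A$ automatically lies in $\JMS(\cA)^\times$. Together with $\det(\id_{\Av})=1$, this gives $\Delta(\JMS(\cA))=\F_3^\times$. The tool is the determinant formula of Proposition~\ref{prop:det-formula} in its second form,
\[
\det(\opL_A)=2^{-n(n-1)}\det(A)\prod_{1\le i<j\le n}(\lambda_i+\lambda_j)^2 .
\]
Since $n(n-1)$ is even, the prefactor $2^{-n(n-1)}$ equals $1$. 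Whenever the product is nonzero and lies in $\F_3$, each square factor that lies in $\F_3^\times$ is $1$. So the goal reduces to controlling $\det(A)$ and the nonvanishing of the pair sums.

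Diagonal choices with eigenvalues in $\F_3$ do not work. The entries $\pm1$ produce a zero sum $1+(-1)$ unless all entries are equal. The choice $A=-I_n$ gives $\det(\opL_A)=(-1)^{n^2}$, which equals $-1$ only for odd $n$. I therefore plan to use eigenvalues in $\F_9$. Let $C$ be the companion matrix of $x^2-x-1$. This polynomial is irreducible over $\F_3$, since its values at $0,1,2$ are $-1,-1,1$. Its roots $\lambda,\mu\in\F_9\setminus\F_3$ satisfy $\lambda+\mu=1$ and $\lambda\mu=-1$. Put
\[
A:=C\oplus I_{n-2}\in M_n(\F_3),
\]
which makes sense for every $n\ge2$. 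Then $\det(A)=\lambda\mu=-1$.

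The remaining step is to check that the pair sums are nonzero and that the product of their squares is $1$. The pair sums are as follows:
\begin{itemize}
\item $\lambda+\mu=1$;
\item $\lambda+1$ and $\mu+1$, each occurring $n-2$ times, both nonzero because $\lambda,\mu\notin\F_3$;
\item $1+1=2$ for the pairs inside the identity block.
\end{itemize}
Grouping the conjugate factors, the product becomes
\[
1\cdot\bigl((\lambda+1)(\mu+1)\bigr)^{2(n-2)}\cdot 2^{2\binom{n-2}{2}},
\]
where $(\lambda+1)(\mu+1)=\lambda\mu+\lambda+\mu+1=1$ and $2^2=1$. Hence the product is $1$, and $\det(\opL_A)=-1\neq0$. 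By Proposition~\ref{prop:det-formula}, $\opL_A$ is invertible.

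I expect the main obstacle to be the choice of $A$: the obvious diagonal candidates fail for even $n$. Once an irreducible quadratic block whose eigenvalues avoid $0$ and $-1$ in every pairwise sum is found, the verification is a short computation with the product formula. I would double-check the case $n=2$ directly, where $\det(\opL_A)=(\lambda\mu)(\lambda+\mu)^2=-1$.
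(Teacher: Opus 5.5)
Your proposal is correct and takes the same approach as the paper. Your companion matrix of $x^2-x-1$ is exactly the paper's $B=\begin{bmatrix}0&1\\1&1\end{bmatrix}$, embedded as $A=\diag(B,I_{n-2})$ with $\det(\opL_A)=-1=2$; you evaluate this determinant via the eigenvalue formula of Proposition~\ref{prop:det-formula} over $\F_9$, whereas the paper uses the block decomposition $\det(\opL_A)=\det(\opL_B)\det\bigl((B+I_2)/2\bigr)^{2(n-2)}$, and your explicit appeal to Lemma~\ref{lem:finite-unit-group} to put $\opL_A$ in the unit group makes precise a step the paper leaves implicit.
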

	
	\begin{proof}
		Set $S:=\JMS(\cA)$ and let
		\[
		B:=\begin{bmatrix}0&1\\ 1&1\end{bmatrix}\in M_2(\K).
		\]
		Then $\det(B)=2$ and $\Tr(B)=1$.
		
		Assume first that $n=2$. By Proposition~\ref{prop:det-formula},
		\[
		\det(\opL_B)=\det(B)\left(\frac{\Tr(B)}{2}\right)^2=2,
		\]
		so $2\in \Delta(S)$, and therefore $\Delta(S)=\K^\times$ since $\det(\opL_{I_2})=1$.
		
		Now assume that $n\ge 3$, and set
		\[
		A:=\diag(B,I_{n-2})\in M_n(\K).
		\]
		Write $X\in\Av$ in block form
		\[
		X=\begin{bmatrix} P&Q\\ R&T\end{bmatrix},
		\]
		where
		\[
		P\in M_2(\K),\quad Q\in M_{2,n-2}(\K),\quad R\in M_{n-2,2}(\K),\quad T\in M_{n-2}(\K).
		\]
		Then
		\[
		\opL_A(X)=
		\begin{bmatrix}
			\opL_B(P) & \dfrac{B+I_2}{2}\,Q\\[2mm]
			R\,\dfrac{B+I_2}{2} & T
		\end{bmatrix}.
		\]
		Hence $\opL_A$ is block diagonal with respect to the decomposition
		\[
		\Av=M_2(\K)\oplus M_{2,n-2}(\K)\oplus M_{n-2,2}(\K)\oplus M_{n-2}(\K),
		\]
		and its determinant is the product of the determinants on these four summands. On $M_{2,n-2}(\K)$, left multiplication by $(B+I_2)/2$ acts independently on the $n-2$ columns, so its determinant is $\det((B+I_2)/2)^{n-2}$; similarly, on $M_{n-2,2}(\K)$ the determinant is again $\det((B+I_2)/2)^{n-2}$. Therefore
		\[
		\det(\opL_A)=\det(\opL_B)\det\!\left(\frac{B+I_2}{2}\right)^{2(n-2)}.
		\]
		A direct computation shows that $\det((B+I_2)/2)=1$ in $\K$, so $\det(\opL_A)=2$. Hence $2\in\Delta(S)$, and therefore $\Delta(S)=\K^\times$.
	\end{proof}
	
	We now prove determinant surjectivity for finite fields of odd characteristic. The case $\F_3$ has already been settled separately, so the remaining argument is a character-sum estimate for a suitable family of invertible Jordan multipliers.
	
	\begin{theorem}\label{thm:finite-det-surj}
		Let $\K=\F_q$ be a finite field of odd characteristic, and let $n\ge 2$. Then
		\[
		\Delta(\JMS(M_n(\K)^+))=\K^\times.
		\]
	\end{theorem}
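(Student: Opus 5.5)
The approach is to use that $\K^\times$ is cyclic and that $\Delta(S)$, for $S:=\JMS(\cA)$, is a subgroup of it. Since $S\subseteq\End_{\K}(\Av)$ is finite, Lemma~\ref{lem:finite-unit-group} gives $S^\times=S\cap\GL(\Av)$, so $\Delta(S)$ is a subgroup and every invertible element of $S$ counts. A proper subgroup of a finite cyclic group lies in the kernel of some nontrivial multiplicative character $\chi$ of $\K^\times$. It therefore suffices to show that for every nontrivial $\chi$ there is an invertible $T\in S$ with $\chi(\det T)\neq 1$. The case $q=3$ is Lemma~\ref{lem:q3-det-surj}, so I assume $q\ge 5$.

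I will take $T=m(u)=\opL_{A(u)}$ with $u\notin\{0,-1\}$, where $A(u)=\diag(u,1,\dots,1)$. This operator is invertible, lies in $S$, and by \eqref{eq:mu-eigen}
\[
\det m(u)=u\Bigl(\frac{u+1}{2}\Bigr)^{r},\qquad r:=2n-2 .
\]
Put $\psi:=\chi^r$. Suppose, for contradiction, that $\chi(\det m(u))=1$ for all $u\in\K\setminus\{0,-1\}$. Then
\[
\chi(u)\psi(u+1)=\psi(2)=:c \qquad (u\neq 0,-1),
\]
and $|c|=1$. Summing over the $q-2$ admissible $u$ gives
\[
J:=\sum_{u\neq 0,-1}\chi(u)\psi(u+1)=(q-2)c,\qquad |J|=q-2 .
\]

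The key step is to bound $|J|$ from above, and this is the step I expect to need the most care. The substitution $u\mapsto -u$ turns $J$ into $\chi(-1)$ times a standard Jacobi sum $J(\chi,\psi)$. I split into three cases.

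\emph{Case $\psi$ nontrivial and $\chi\psi$ nontrivial.} The classical Jacobi-sum evaluation gives $|J|=\sqrt q$.

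\emph{Case $\psi$ trivial.} Here $J=\sum_{u\neq0}\chi(u)-\chi(-1)=-\chi(-1)$, so $|J|=1$.

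\emph{Case $\chi\psi$ trivial.} Here $\psi=\bar\chi$, so $J=\sum_{u\neq0,-1}\chi\bigl(u/(u+1)\bigr)$. As $u$ runs over $\K\setminus\{0,-1\}$, the value $u/(u+1)$ runs over $\K\setminus\{0,1\}$. Hence $J=-\chi(1)=-1$ and $|J|=1$.

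In every case $|J|\le\sqrt q<q-2$ once $q\ge5$, which contradicts $|J|=q-2$. So for each nontrivial $\chi$ some $\det m(u)$ lies outside $\ker\chi$. Therefore $\Delta(S)$ lies in no proper subgroup, and $\Delta(S)=\K^\times$.

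The remaining points to check carefully are the constant $\psi(2)$ coming from the factor $2^{-r}$, and the boundary terms at $u=0,-1$ in the degenerate cases. If the single family $m(u)$ ever proved insufficient, I would fall back on products $m(a)m(b)^{-1}$, as in Theorem~\ref{thm:delta-infinite}, which are still units of $S$.
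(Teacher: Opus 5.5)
Your proposal is correct and follows the paper's proof essentially step for step. It uses the same family $\opL_{\diag(u,1,\dots,1)}$, the same character sum of modulus $q-2$, and the same three-way case split according to whether $\chi^{r}$ or $\chi^{r+1}$ is trivial, with $q=3$ delegated to Lemma~\ref{lem:q3-det-surj}; the only cosmetic difference is that you evaluate the case $\chi^{r+1}$ trivial via the substitution $u\mapsto u/(u+1)$ rather than quoting $J(\chi,\chi^{-1})=-\chi(-1)$.
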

	
	\begin{proof}
		If $q=3$, the claim is Lemma~\ref{lem:q3-det-surj}. We may therefore assume that $q\ge 5$. Set $S:=\JMS(\cA)$ and $m:=2n-2$. The argument is by contradiction: if $\Delta(S)$ were a proper subgroup of $\K^\times$, then some nontrivial multiplicative character would be trivial on it. Evaluating this character on the determinants of a suitable family of invertible Jordan multipliers leads to a Jacobi-sum estimate, which yields the required contradiction.
		
		Since $S$ is finite, Lemma~\ref{lem:finite-unit-group} implies that
		$\Delta(S)=\det(S^\times)$ is a subgroup of $\K^\times$.
		
		For $t\in\K^\times\setminus\{-1\}$, set
		\[
		A_t:=\diag(t,1,\dots,1).
		\]
		Then $\opL_{A_t}\in S$, and by Proposition~\ref{prop:det-formula},
		or directly from its action on the standard basis,
		\[
		\det(\opL_{A_t})
		=
		t\left(\frac{t+1}{2}\right)^{2n-2}.
		\]
		This determinant is nonzero for $t\ne 0,-1$. Hence
		\[
		t\left(\frac{t+1}{2}\right)^m\in\Delta(S),
		\qquad m=2n-2,
		\]
		for every $t\in\K^\times\setminus\{-1\}$.
		
		Suppose for contradiction that $\Delta(S)$ is a proper subgroup of $\K^\times$. Then the quotient
		$
		\K^\times/\Delta(S)
		$
		is a nontrivial finite abelian group. Choosing a nontrivial character of this
		quotient and composing with the quotient map, we obtain a nontrivial
		multiplicative character
		\[
		\chi:\K^\times\to\C^\times
		\]
		which is trivial on $\Delta(S)$. For each integer $r\in\Z$ define
		\[
		\chi^r(a):=\chi(a)^r \qquad (a\in\K^\times),
		\]
		and then extend $\chi^r$ to $\K$ by $(\chi^r)(0)=0$. Define
		\[
		\Sigma:=\sum_{t\in\K}\chi(t)\chi^m(t+1).
		\]
		For every $t\in\K^\times\setminus\{-1\}$,
		\[
		1=\chi\!\left(t\left(\frac{t+1}{2}\right)^m\right)=\chi(t)\chi^m(t+1)\chi(2)^{-m},
		\]
		so
		\[
		\chi(t)\chi^m(t+1)=\chi(2)^m.
		\]
		Since $\chi(0)=\chi^m(0)=0$, the summands at $t=0$ and $t=-1$ both vanish, and all remaining $q-2$ summands are equal to the constant $\chi(2)^m$. It follows that
		\[
		|\Sigma|=q-2.
		\]
		
		We now estimate $\Sigma$. If $\chi^m$ is the trivial character on $\K^\times$, then $\chi^m(t+1)=1$ for every $t\ne -1$, and therefore
		\[
		\Sigma=\sum_{t\ne -1}\chi(t)=-\chi(-1),
		\]
		because $\sum_{t\in\K}\chi(t)=0$ and $\chi(0)=0$. Hence $|\Sigma|=1$.
		
		Assume next that $\chi^m$ is not the trivial character on $\K^\times$. Given multiplicative characters $\alpha,\beta:\K^\times\to\C^\times$, define their \emph{Jacobi sum} by
		\[
		J(\alpha,\beta):=\sum_{x\in\K}\alpha(x)\beta(1-x),
		\]
		where $\alpha$ and $\beta$ are extended to $\K$ by setting their value at $0$ equal to $0$; see, e.g., \cite[Chapter~5, \S3]{LidlNiederreiter1997}. After the substitution $x=-t$, we get
		\[
		J(\chi,\chi^m)
		=
		\sum_{t\in\K}\chi(-t)\chi^m(1+t)
		=
		\chi(-1)\sum_{t\in\K}\chi(t)\chi^m(t+1)
		=
		\chi(-1)\Sigma.
		\]
		Since $\chi(-1)^2=\chi(1)=1$, this is equivalent to
		\[
		\Sigma=\chi(-1)J(\chi,\chi^m).
		\]
		If $\chi^{m+1}$ is the trivial character on $\K^\times$, then $\chi^m=\chi^{-1}$ and the standard identity
		\[
		J(\chi,\chi^{-1})=-\chi(-1)
		\]
		for nontrivial $\chi$ gives $|\Sigma|=1$. If $\chi^{m+1}$ is not the trivial character on $\K^\times$, then the standard Jacobi-sum formula gives
		\[
		|J(\chi,\chi^m)|=\sqrt q.
		\]
		See, for example, \cite[Chapter~5, \S3]{LidlNiederreiter1997} for these Jacobi-sum identities. Hence in every case $|\Sigma|\le\sqrt q$, contradicting $|\Sigma|=q-2$ because $q\ge 5$ implies $q-2>\sqrt q$.
	\end{proof}
	
	This completes the finite-field determinant argument, and hence the finite-field case of the main theorem.
	\begin{corollary}\label{cor:main-finite}
		Let $\K$ be a finite field of odd characteristic, and let $n\ge 2$. Then
		\[
		\GL(M_n(\K))\subseteq\JMS(M_n(\K)^+)
		\qquad\text{and}\qquad
		\JMS(M_n(\K)^+)=\End_{\K}(M_n(\K)).
		\]
	\end{corollary}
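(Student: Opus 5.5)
The proof mirrors that of Corollary~\ref{cor:main-infinite}, with the infinite-field determinant input replaced by its finite-field counterpart. Set $S:=\JMS(\cA)$ with $\K=\F_q$, $q$ odd, and $n\ge 2$. First, Theorem~\ref{thm:SL-main} gives $\SL(\Av)\subseteq S$; it holds over every field of characteristic different from $2$, so in particular over $\F_q$. Second, Theorem~\ref{thm:finite-det-surj} gives $\Delta(S)=\K^\times$; the case $q=3$ is covered there through Lemma~\ref{lem:q3-det-surj}.

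Next, note that $S$ is a monoid, since $\opL_{I_n}=\id_{\Av}$. Both hypotheses of Proposition~\ref{prop:det-reduction} therefore hold for $W=\Av$, and it yields $\GL(\Av)\subseteq S$. The only point to check is that $\Delta(S)$ is defined through the unit group $S^\times$, so the element $H$ produced in Proposition~\ref{prop:det-reduction} has its inverse in $S$. Over a finite field this is automatic by Lemma~\ref{lem:finite-unit-group}, and in any case it is built into the definition of $\Delta$.

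Finally, identify $\End_{\K}(\Av)$ with $M_d(\K)$, where $d=n^2$. Proposition~\ref{prop:rank-d-1} supplies a singular element of $S$ of rank $d-1$, namely $\opL_{\diag(0,1,\dots,1)}$. Together with $\GL_d(\K)\subseteq S$, Lemma~\ref{lem:semigroup} gives $S=M_d(\K)$, that is, $\JMS(M_n(\K)^+)=\End_{\K}(M_n(\K))$.

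There is no real obstacle: every ingredient is an earlier result, and the substantive finite-field work (the Jacobi-sum estimate and the separate $\F_3$ case) is already in Theorem~\ref{thm:finite-det-surj}.
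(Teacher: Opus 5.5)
Your proof is correct and is essentially the paper's own argument: Theorem~\ref{thm:SL-main} and Theorem~\ref{thm:finite-det-surj} feed into Proposition~\ref{prop:det-reduction} to give $\GL(\Av)\subseteq S$, and then Proposition~\ref{prop:rank-d-1} and Lemma~\ref{lem:semigroup} give $S=\End_{\K}(\Av)$. One small point: the remark about $H^{-1}\in S$ is harmless but unnecessary, since Proposition~\ref{prop:det-reduction} only uses $H\in S$ and $H^{-1}G\in\SL(\Av)\subseteq S$.
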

	
	\begin{proof}
		Set $S:=\JMS(\cA)$. By Theorem~\ref{thm:SL-main} and Theorem~\ref{thm:finite-det-surj}, Proposition~\ref{prop:det-reduction} gives $\GL(\Av)\subseteq S$. Proposition~\ref{prop:rank-d-1} and Lemma~\ref{lem:semigroup} then imply $S=\End_{\K}(\Av)$.
	\end{proof}
	
	\subsection{Proof of Theorem~\ref{thm:main-all-fields}}
	
	Having treated both the infinite-field and finite-field cases, we are now ready to prove our main result.
	\begin{proof}[Proof of Theorem~\ref{thm:main-all-fields}]
		If $n=1$, then $M_1(\K)^+=\K$ and $\opL_A(X)=AX$, so $\JMS(M_1(\K)^+)=\End_{\K}(\K)$.
		
		Assume now that $n\ge 2$. If $\K$ is infinite, the result is Corollary~\ref{cor:main-infinite}. If $\K$ is finite, then $\operatorname{char}\K\ne 2$ means that $\K$ has odd cardinality, and the result is Corollary~\ref{cor:main-finite}.
	\end{proof}
	
	As an immediate consequence of Theorem~\ref{thm:main-all-fields}, we obtain the following.
	\begin{corollary}\label{cor:unit-group}
		Let $\K$ be a field of characteristic different from $2$, and let $n\ge 1$. Then
		\[
		\JMS(M_n(\K)^+)^\times=\GL(M_n(\K)).
		\]
	\end{corollary}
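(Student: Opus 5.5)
This follows from Theorem~\ref{thm:main-all-fields} together with the definition of the unit group of a monoid; no new computation is needed. Recall that for a monoid $S\subseteq\End_{\K}(W)$, the unit group $S^\times$ consists of the elements $T\in S$ that have a two-sided inverse lying in $S$. Here the monoid identity is $\id_{\Av}=\opL_{I_n}$.

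First, I will show $\JMS(\cA)^\times\subseteq\GL(\Av)$. If $T\in\JMS(\cA)^\times$, there is $T'\in\JMS(\cA)$ with $TT'=T'T=\id_{\Av}$. Since $T'$ is in particular an element of $\End_{\K}(\Av)$, the operator $T$ is invertible in $\End_{\K}(\Av)$, so $T\in\GL(\Av)$.

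Next, I will show $\GL(\Av)\subseteq\JMS(\cA)^\times$. Let $G\in\GL(\Av)$. By Theorem~\ref{thm:main-all-fields}, $\JMS(\cA)=\End_{\K}(\Av)$. Both $G$ and its inverse $G^{-1}$ lie in $\End_{\K}(\Av)$, hence both lie in $\JMS(\cA)$. Therefore $G$ has an inverse inside the monoid, and $G\in\JMS(\cA)^\times$.

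This argument covers every $n\ge 1$, because Theorem~\ref{thm:main-all-fields} is stated for all $n\ge 1$; the case $n=1$ needs no separate treatment. There is no real obstacle. The only point to keep in mind is that, for an infinite field, membership of an invertible operator in $\JMS(\cA)$ does not by itself force its inverse to lie in $\JMS(\cA)$; Lemma~\ref{lem:finite-unit-group} gives this only for finite monoids. It is the full equality $\JMS(\cA)=\End_{\K}(\Av)$ that puts $G^{-1}$ in the monoid.
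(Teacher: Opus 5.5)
Your proof is correct and matches the paper's approach: the paper records this corollary as an immediate consequence of Theorem~\ref{thm:main-all-fields}, and your two inclusions (a unit of the monoid is invertible in $\End_{\K}(M_n(\K))$; conversely, $G$ and $G^{-1}$ both lie in $\JMS(M_n(\K)^+)=\End_{\K}(M_n(\K))$) are exactly what that one-line deduction amounts to. Your closing caveat is also accurate: in general only the finite-monoid Lemma~\ref{lem:finite-unit-group} would give this without the full equality.
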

	
	\section{Concluding remarks}
	
	\begin{remark}[Further directions]
		Several natural quantitative questions remain open. For example, one may ask
		for effective bounds on the word length needed to express a given endomorphism
		of $M_n(\K)$ as a product of Jordan multiplication operators. It would also
		be interesting to determine smaller or more canonical generating families for
		$\JMS(M_n(\K)^+)$, and to understand how the minimal word length depends on
		$n$ and on the field $\K$.
		
		More concretely, one may ask for smaller but still natural families of
		matrices $\mathcal{G}\subseteq M_n(\K)$ whose corresponding multiplication
		operators already generate the full endomorphism semigroup:
		\[
		\langle \opL_A:A\in\mathcal{G}\rangle=\End_{\K}(M_n(\K)).
		\]
		A second natural quantitative question concerns the minimal number of factors
		needed to represent a given operator in this way. Namely, given
		$T\in\End_{\K}(M_n(\K))$, our result shows that there exist matrices
		$A_1,\dots,A_k\in M_n(\K)$ such that
		\[
		T=\opL_{A_1}\cdots\opL_{A_k}.
		\]
		If $k(T)$ denotes the minimal number of factors in such a representation, then
		one may ask for effective upper bounds on $k(T)$, for structural
		descriptions of operators with small word length, and for the asymptotic
		behaviour of maximal or typical values of $k(T)$ as $n$ varies and the ground
		field changes.
		
		\smallskip
		
		A broader direction is to study analogous semigroups for other Jordan
		algebras, as well as for natural subspaces and related matrix structures. The
		present result suggests that Jordan multiplication operators can generate
		surprisingly large linear semigroups, but it remains unclear which structural
		features of $M_n(\K)^+$ are responsible for this phenomenon and how far it
		extends. We have already begun a systematic investigation of these questions,
		both for further classes of simple Jordan algebras and for analogous problems in
		other nonassociative settings, including simple Lie algebras. Several of
		these projects are currently under active development and will appear in
		subsequent work.
	\end{remark}
	
	\begin{remark}[Exact symbolic verification]
		The explicit identities recorded in
		Appendices~\ref{app:Q-factorization} and~\ref{app:M2-computations} were
		verified by exact symbolic computation in the accompanying supplementary
		Mathematica files. In particular, the identities in
		Appendix~\ref{app:Q-factorization} are verified exactly over $\Q$, while those
		in Appendix~\ref{app:M2-computations} are verified exactly with symbolic
		parameters $t,s$ and coefficients in $\Z[1/2]$. Thus the displayed equalities
		are established exactly, rather than by testing special values.
	\end{remark}
	
	\section*{Acknowledgments}
	OpenAI’s GPT-5.4 was used for language editing and in the exploratory stage of this work to search for explicit finite factorizations and local operator identities. The explicit calculations used in the paper were subsequently
	verified by exact symbolic computation, and the corresponding supplementary verification files accompany the paper.
	
	\smallskip

	This research was funded by the European Union NextGenerationEU through the National Recovery and Resilience Plan 2021--2026. Institutional grant of University of Zagreb Faculty of Science (IK IA 1.1.3. Impact4Math).
	
	M.K. was supported by the Croatian Science Foundation under the project no.\ IP-2022-10-5008 (TEBAG) and acknowledges support from the project “Implementation of cutting-edge research and its application as part of the Scientific Center of Excellence for Quantum and Complex Systems, and Representations of Lie Algebras”, Grant No.\ PK.1.1.10.0004, co-financed by the European Union through the European Regional Development Fund -- Competitiveness and Cohesion Programme 2021-2027. 
	
	\appendix
	
	\section{An explicit rational factorization of \texorpdfstring{$\id+\opU_{E_{12}}$}{id+U\_E12}}\label{app:Q-factorization}
	
	This appendix records the only characteristic-zero factorization used in the body of the paper. Its sole purpose is to provide one explicit realization of $\id_{\Av}+\opU_{E_{12}}$ inside the semigroup generated by Jordan multiplication operators.
	
	\begin{proof}[Proof of Lemma~\ref{lem:E12-over-Q}]
		For $1\le i\le 16$, let $B_i\in M_2(\Q)$ be given by
		\[
		\begin{aligned}
			B_1&:=\begin{bmatrix}1&\frac32\\[2pt]\frac12&1\end{bmatrix},&
			B_2&:=\begin{bmatrix}1&-\frac32\\[2pt]-\frac12&1\end{bmatrix},\\[6pt]
			B_3=B_{15}&:=\begin{bmatrix}1&3\\[2pt]1&1\end{bmatrix},&
			B_4=B_{16}&:=\begin{bmatrix}1&-3\\[2pt]-1&1\end{bmatrix},\\[6pt]
			B_5=B_{13}&:=\begin{bmatrix}1&1\\[2pt]3&1\end{bmatrix},&
			B_6=B_{14}&:=\begin{bmatrix}1&-1\\[2pt]-3&1\end{bmatrix},\\[6pt]
			B_7=B_9&:=\begin{bmatrix}1&0\\[2pt]1&1\end{bmatrix},&
			B_8=B_{10}&:=\begin{bmatrix}1&0\\[2pt]-1&1\end{bmatrix},\\[6pt]
			B_{11}&:=\begin{bmatrix}1&\frac12\\[2pt]\frac32&1\end{bmatrix},&
			B_{12}&:=\begin{bmatrix}1&-\frac12\\[2pt]-\frac32&1\end{bmatrix}.
		\end{aligned}
		\]
		Relative to the standard basis $(E_{11},E_{12},E_{21},E_{22})$ of $M_2(\Q)$, exact multiplication of the corresponding $4\times 4$ operator matrices yields
		\begin{equation}\label{eq:Q-E12-factorization}
			\opL_{B_1}\opL_{B_2}\cdots\opL_{B_{16}}=
			\begin{bmatrix}
				1&0&0&0\\
				0&1&1&0\\
				0&0&1&0\\
				0&0&0&1
			\end{bmatrix}
			=\id_{M_2(\Q)}+\opU_{E_{12}}.
		\end{equation}
		Indeed, $\opU_{E_{12}}$ annihilates $E_{11},E_{12},E_{22}$ and sends $E_{21}$ to $E_{12}$. Therefore $\id_{M_2(\Q)}+\opU_{E_{12}}$ fixes $E_{11},E_{12},E_{22}$ and sends $E_{21}$ to $E_{21}+E_{12}$. This proves the lemma for $n=2$.
		
		Assume now that $n\ge 3$. For $1\le i\le 16$, set
		\[
		\widehat B_i:=\diag(B_i,I_{n-2})\in M_n(\Q).
		\]
		From now on, $E_{12}$ denotes the $n\times n$ matrix unit with its
		nonzero entry in position $(1,2)$. Write $X\in\Av=M_n(\Q)$ in $2\times 2$ block form:
		\[
		X=\begin{bmatrix}P&Q\\ R&S\end{bmatrix},
		\qquad
		P\in M_2(\Q),\quad Q\in M_{2,n-2}(\Q),\quad R\in M_{n-2,2}(\Q),\quad S\in M_{n-2}(\Q).
		\]
		Then
		\begin{equation}\label{eq:block-action-B}
			\opL_{\widehat B_i}(X)=
			\begin{bmatrix}
				\opL_{B_i}(P) & \dfrac{B_i+I_2}{2}\,Q\\[2mm]
				R\,\dfrac{B_i+I_2}{2} & S
			\end{bmatrix}.
		\end{equation}
		Set
		\[
		C_i:=\frac{B_i+I_2}{2}\in M_2(\Q).
		\]
		A direct multiplication gives
		\[
		C_1C_2=\frac{13}{16}I_2,
		\quad
		C_3C_4=\frac14I_2,
		\quad
		C_5C_6=\frac14I_2,
		\quad
		C_7C_8=I_2,
		\quad
		C_9C_{10}=I_2,
		\]
		\[
		C_{11}C_{12}=\frac{13}{16}I_2,
		\quad
		C_{13}C_{14}=\frac14I_2,
		\quad
		C_{15}C_{16}=\frac14I_2,
		\]
		and therefore
		\begin{equation}\label{eq:Ci-product}
			C_1C_2\cdots C_{16} = C_{16}\cdots C_2C_1 = \frac{169}{65536}\,I_2.
		\end{equation}
		
		We now choose a scalar $\sigma_0\in\Q$ so that the four additional diagonal factors exactly cancel the scalar appearing in the off-diagonal blocks in \eqref{eq:Ci-product}. At this point we use Lemma~\ref{lem:scalar-4-factor}, which is a purely
		algebraic identity and applies over $\Q$. A convenient choice is
		\[
		\sigma_0:=\left(\frac{1024}{13}\right)^2,
		\]
		for which
		\[
		\frac{\sigma_0}{16}=\frac{65536}{169}.
		\]
		Note that $\sigma_0\notin\{0,2,-2,-4\}$, so Lemma~\ref{lem:scalar-4-factor} applies. It provides rational numbers $v_1,\dots,v_4$ such that
		\[
		v_1v_2v_3v_4=1,
		\qquad
		(1+v_1)(1+v_2)(1+v_3)(1+v_4)=\sigma_0.
		\]
		Equivalently,
		\[
		\prod_{j=1}^4\frac{1+v_j}{2}=\frac{\sigma_0}{16}=\frac{65536}{169}.
		\]
		For $1\le j\le 4$, set
		\[
		D_j:=\diag(v_jI_2,I_{n-2})\in M_n(\Q).
		\]
		Then
		\begin{equation}\label{eq:block-action-D}
			\opL_{D_j}(X)=
			\begin{bmatrix}
				v_jP & \dfrac{1+v_j}{2}\,Q\\[2mm]
				\dfrac{1+v_j}{2}\,R & S
			\end{bmatrix}.
		\end{equation}
		Define
		\[
		\Theta:=\opL_{D_1}\opL_{D_2}\opL_{D_3}\opL_{D_4}\,\opL_{\widehat B_1}\opL_{\widehat B_2}\cdots\opL_{\widehat B_{16}}.
		\]
		Using \eqref{eq:block-action-B}, \eqref{eq:Ci-product}, and \eqref{eq:block-action-D}, we obtain
		\[
		\Theta(X)=
		\begin{bmatrix}
			(v_1v_2v_3v_4)\,(\opL_{B_1}\cdots \opL_{B_{16}})(P)
			&
			\left(\prod_{j=1}^4\frac{1+v_j}{2}\right)\left(C_1\cdots C_{16}\right)Q
			\\[3mm]
			R\left(C_{16}\cdots C_1\right)\left(\prod_{j=1}^4\frac{1+v_j}{2}\right)
			&
			S
		\end{bmatrix}.
		\]
		By \eqref{eq:Ci-product}, both $C_1\cdots C_{16}$ and $C_{16}\cdots C_1$ equal the same scalar matrix $\frac{169}{65536}I_2$, so the lower-left block simplifies in the same way as the upper-right block. Substituting \eqref{eq:Q-E12-factorization} and \eqref{eq:Ci-product} yields
		\[
		\Theta(X)=
		\begin{bmatrix}
			P+E_{12}PE_{12} & Q\\
			R & S
		\end{bmatrix}
		=
		X+E_{12}XE_{12}
		=
		(\id_{\Av}+\opU_{E_{12}})(X).
		\]
		Thus $\Theta=\id_{\Av}+\opU_{E_{12}}$.
	\end{proof}
	
	\section{Explicit \texorpdfstring{$M_2$}{M2}-calculations}\label{app:M2-computations}
	
	This appendix records the exact local $M_2$ identities used in Proposition~\ref{prop:M2}. They are the only explicit corner calculations on which the generation of elementary transvections depends. The formulas were first identified computationally and then verified by exact symbolic computation over the polynomial ring $\Z[1/2,t,s]$. Since all coefficients lie in $\Z[1/2]$, the resulting identities remain valid after specialization to every field of characteristic different from $2$.
	
	All operators below are written relative to the ordered basis
	\[
	\cE=(B_1,B_2,B_3,B_4)
	=
	\left(E_{21}+\frac12I_2,\ E_{12},\ I_2+E_{12},\ E_{11}-E_{22}\right)
	\]
	of $M_2(\K)$.
	
	\begin{lemma}\label{lem:M2-formulas}
		With respect to the basis $\cE$, the following identities hold:
		\begin{align*}
			u_{E_{12}}(t) &= \id_{M_2(\K)}+t\varepsilon_{21},\\
			g_{E_{12}}u_{E_{21}}(t)g_{E_{12}}^{-1}
			&=
			\id_{M_2(\K)}+t\varepsilon_{12},\\
			g_{R_+}u_{E_{21}}(t)g_{R_+}^{-1}
			&=
			\id_{M_2(\K)}+t\Bigl(\frac12\varepsilon_{12}-\frac12\varepsilon_{13}+\varepsilon_{14}\Bigr),\\
			g_{R_-}u_{E_{21}}(t)g_{R_-}^{-1}
			&=
			\id_{M_2(\K)}+t\Bigl(\frac12\varepsilon_{12}-\frac12\varepsilon_{13}-\varepsilon_{14}\Bigr),\\
			g_{2E_{21}}u_{E_{12}}(s)g_{2E_{21}}^{-1}
			&=
			\id_{M_2(\K)}+2s\,\varepsilon_{32},\\
			g_Ru_S(t)g_R^{-1}
			&=
			\id_{M_2(\K)}+t(-\varepsilon_{12}+\varepsilon_{32}+\varepsilon_{42}),
		\end{align*}
		where
		\[
		R_+:=\begin{bmatrix}1&1\\ -1&-1\end{bmatrix},
		\qquad
		R_-:=\begin{bmatrix}-1&1\\ -1&1\end{bmatrix},
		\qquad
		R:=\begin{bmatrix}-2&-1\\ 4&2\end{bmatrix},
		\qquad
		S:=\begin{bmatrix}-1&-1\\ 1&1\end{bmatrix}.
		\]
	\end{lemma}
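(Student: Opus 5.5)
This is a finite verification, so the plan is to compute every operator involved in one fixed coordinate system and then change basis. First I will write the matrices of the relevant operators in the standard basis $(E_{11},E_{12},E_{21},E_{22})$. By Lemma~\ref{lem:UN-properties}(b), for each rank-one square-zero $N$ we have $\opU_N(X)=\inner{N}{X}N$. Hence $u_N(t)=\id+t\,N\,\inner{N}{\cdot}$, and its matrix is $I_4+t\,\mathrm{vec}(N)\,\mathrm{vec}(N^t)^t$, a rank-one perturbation of the identity. For $g_P=\opL_{I_2+P}=\id+\opL_P$, Lemma~\ref{lem:gR-inverse} gives the inverse $g_P^{-1}=\id-\opL_P+\tfrac12\opU_P$.

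Next I will use that conjugation preserves rank-one structure:
\[
g_P\,u_Q(t)\,g_P^{-1}=\id+t\,\bigl(g_PQ\bigr)\,\inner{Q}{g_P^{-1}(\cdot)}.
\]
So each conjugate equals $\id+t\,w\,\varphi$. Here $w:=g_P(Q)=Q+P\circ Q$ is a single matrix, and $\varphi:=\inner{Q}{\cdot}\circ g_P^{-1}$ is a single functional. Because $\opL_P$ is self-adjoint for the trace pairing, $\varphi=\inner{g_P^{-t}Q}{\cdot}$ with $g_P^{-t}Q=Q-P\circ Q+\tfrac12 PQP$. Each identity therefore comes down to two $2\times2$ computations: the direction $w$ and the covector $Q-P\circ Q+\tfrac12PQP$.

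The last step is to express each $w$ in the basis $\cE$ and each covector in the dual basis of $\cE$. A rank-one operator $w\otimes\varphi$ has $(r,s)$ coefficient equal to (the $B_r$-coordinate of $w$) times $\varphi(B_s)$. So I only need the values $\varphi(B_s)=\inner{\cdot}{B_s}$ for $s=1,\dots,4$, which are four traces.

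For example, with $N=E_{12}$ the covector is $E_{12}$ itself, and $\inner{E_{12}}{B_s}=\Tr(E_{12}B_s)$ equals $1$ for $s=1$ and $0$ otherwise. The direction is $E_{12}=B_2$. This gives $\varepsilon_{21}$, matching the first identity. The case $P=E_{12}$, $Q=E_{21}$ works the same way: $w=E_{21}+\tfrac12 I_2=B_1$, and the covector $E_{21}-\tfrac12I+\tfrac12E_{12}$ pairs to $1$ with $B_2$ and to $0$ with $B_1,B_3,B_4$.

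The remaining four cases ($R_\pm$ with $E_{21}$, $2E_{21}$ with $E_{12}$, and $R$ with $S$) are the same mechanical computation. Throughout I will keep the parameters $t,s$ symbolic and all coefficients in $\Z[1/2]$, so the identities hold in every characteristic other than $2$.

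The main obstacle is bookkeeping rather than ideas. One check is that the direction vectors land exactly on the claimed combinations of the $B_r$. In the $R_\pm$ cases the direction must be a multiple of $B_1$, with a covector pairing to $(0,\tfrac12,-\tfrac12,\pm1)$. In the $(R,S)$ case the direction must be $-B_1+B_3+B_4$ with covector dual to $B_2$. The second check is getting the adjoint of $g_P^{-1}$ right, using self-adjointness of $\opL_P$ and $\opU_P$ with respect to $\Tr(XY)$. I would verify each case against a symbolic computation, as the paper's supplementary files do.
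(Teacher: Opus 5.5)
Your proposal is correct, but it organizes the verification differently from the paper. The paper lists the images of $B_1,\dots,B_4$ under each conjugate, obtained by exact symbolic computation of the full $4\times4$ operator matrices, with $g_P^{-1}$ taken from Lemma~\ref{lem:gR-inverse}. You instead use Lemma~\ref{lem:UN-properties}(b) and the self-adjointness of $\opL_P$ and $\opU_P$ for $\inner{\cdot}{\cdot}$ to write $g_Pu_Q(t)g_P^{-1}=\id+t\,g_P(Q)\inner{g_P^{-1}(Q)}{\cdot}$, so each identity costs a few $2\times2$ products and four traces.

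The remaining cases come out as you predict. For $R_\pm$ one gets $g_{R_\pm}(E_{21})=E_{21}+\frac12 I_2=B_1$, with covector $E_{21}-\frac12 I_2+\frac12 R_\pm$ taking the values $0,\frac12,-\frac12,\pm1$ on $B_1,\dots,B_4$. For $(2E_{21},E_{12})$ one gets $g_{2E_{21}}(E_{12})=B_3$, with covector $E_{12}-I_2+2E_{21}$ taking the values $0,2,0,0$.

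The $(R,S)$ case needs one correction to your predicted check. With your normalization, $g_R(S)=S-\frac12 I_2=B_1-B_3-B_4$, and the covector $S+\frac12 I_2-\frac12 R$ takes the values $0,-1,0,0$. The rank-one operator is still $-\varepsilon_{12}+\varepsilon_{32}+\varepsilon_{42}$, as required. However, a check that literally demands direction $-B_1+B_3+B_4$ and the covector dual to $B_2$ would report a spurious mismatch, since only the product is determined.

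Your route gives a proof a reader can check by hand without the supplementary files. It also explains the choice of basis, since $B_1=g_{E_{12}}(E_{21})$ and $B_3=g_{2E_{21}}(E_{12})$ are exactly the transported directions. The paper's tabulation, in exchange, displays each full operator directly without the rank-one bookkeeping.
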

	
	\begin{proof}
		In each case the matrices $E_{12}$, $E_{21}$, $R_+$, $R_-$, $R$, $S$, and $2E_{21}$ that enter the formulas are rank one and square-zero, so the corresponding operators $u_N(\cdot)$ and $g_Ru_S(\cdot)g_R^{-1}$ are defined and belong to $\JMS(M_2(\K)^+)$ by the results proved in the body of the paper.
		
		Each identity was verified by exact symbolic computation of the corresponding operator matrix relative to the basis $\cE$. For completeness, we also record the action on the basis vectors $B_1,B_2,B_3,B_4$, from which the displayed matrices may be read off directly.
		
		For the first identity,
		\[
		\opU_{E_{12}}(B_1)=E_{12}B_1E_{12}=E_{12}=B_2,
		\qquad
		\opU_{E_{12}}(B_2)=\opU_{E_{12}}(B_3)=\opU_{E_{12}}(B_4)=0,
		\]
		so $u_{E_{12}}(t)$ sends $B_1$ to $B_1+tB_2$ and fixes $B_2,B_3,B_4$.
		
		For the remaining identities, one obtains:
		\begin{align*}
			g_{E_{12}}u_{E_{21}}(t)g_{E_{12}}^{-1}&:\quad
			B_1\mapsto B_1,
			\quad
			B_2\mapsto B_2+tB_1,
			\quad
			B_3\mapsto B_3,
			\quad
			B_4\mapsto B_4;\\
			g_{R_+}u_{E_{21}}(t)g_{R_+}^{-1}&:\quad
			B_1\mapsto B_1,
			\quad
			B_2\mapsto B_2+\tfrac t2 B_1,
			\quad
			B_3\mapsto B_3-\tfrac t2 B_1,
			\quad
			B_4\mapsto B_4+t B_1;\\
			g_{R_-}u_{E_{21}}(t)g_{R_-}^{-1}&:\quad
			B_1\mapsto B_1,
			\quad
			B_2\mapsto B_2+\tfrac t2 B_1,
			\quad
			B_3\mapsto B_3-\tfrac t2 B_1,
			\quad
			B_4\mapsto B_4-t B_1;\\
			g_{2E_{21}}u_{E_{12}}(s)g_{2E_{21}}^{-1}&:\quad
			B_1\mapsto B_1,
			\quad
			B_2\mapsto B_2+2s B_3,
			\quad
			B_3\mapsto B_3,
			\quad
			B_4\mapsto B_4;\\
			g_Ru_S(t)g_R^{-1}&:\quad
			B_1\mapsto B_1,
			\quad
			B_2\mapsto B_2-t B_1+t B_3+t B_4,
			\quad
			B_3\mapsto B_3,
			\quad
			B_4\mapsto B_4.
		\end{align*}
		Writing these images in the basis $\cE$ gives exactly the matrices displayed above. Here we again use Lemma~\ref{lem:gR-inverse} for the inverses of the $g_R$.
	\end{proof}

\end{document}